\newtheorem{theorem}{Theorem}[section]
\newtheorem{proposition}[theorem]{Proposition}
\newtheorem{lemma}[theorem]{Lemma}
\newtheorem{example}[theorem]{Example}
\definecolor{wineRed}{rgb}{0.7,0,0.3}
\newcommand{\PR}[1]{{\color{wineRed}#1}}
\theoremstyle{definition}
\newtheorem{definition}[theorem]{Definition}
\newtheorem{remark}[theorem]{Remark}
\newcommand{\authorfootnotes}{\renewcommand\thefootnote{\@fnsymbol\c@footnote}}%
\newcommand{\cC}{\mathcal C}
\newcommand{\cH}{\mathcal H}
\newcommand{\res}{\mathop{\hbox{\vrule height 7pt width .5pt depth 0pt \vrule height .5pt width 6pt depth 0pt}}\nolimits}
\def\dist{\mbox{dist}\,}
\def\conv{\mbox{conv}\,}
\newcommand{\bN}{\mathbb N}
\newcommand{\R}{\mathbb R}
\newcommand{\bR}{\mathbb R}
\newcommand{\bbT}{{\bf T}}
\newcommand{\lb}{\langle}
\newcommand{\rb}{\rangle}
\newcommand{\LC}{\hbox{\Large$\llcorner$}}
\newcommand{\eps}{\varepsilon}
\DeclareMathOperator{\spt}{spt}
\DeclareMathOperator{\Lip}{Lip}
\DeclareMathOperator{\diam}{diam}
\definecolor{zzttqq}{rgb}{0.6,0.2,0}
\definecolor{uququq}{rgb}{0.25,0.25,0.25}
\definecolor{qqqqff}{rgb}{0,0,1}
\definecolor{xdxdff}{rgb}{0.49,0.49,1}
\definecolor{ffffqq}{rgb}{1,1,0}
\definecolor{fftttt}{rgb}{1,0.2,0.2}
\definecolor{ffqqqq}{rgb}{1,0,0}
\definecolor{ffqqtt}{rgb}{1,0,0.2}
\definecolor{ffttqq}{rgb}{1,0.2,0}
\definecolor{xdxdff}{rgb}{0.49,0.49,1}
\definecolor{qqqqff}{rgb}{0,0,1}
\definecolor{ffttww}{rgb}{1,0.2,0.4}
\definecolor{ttqqcc}{rgb}{0.2,0,0.8}
\definecolor{ffqqww}{rgb}{1,0,0.4}
\definecolor{xdxdff}{rgb}{0.49,0.49,1}
\definecolor{qqqqff}{rgb}{0,0,1}
\definecolor{uququq}{rgb}{0.25,0.25,0.25}
\definecolor{ttffww}{rgb}{0.2,1,0.4}
\definecolor{ffqqqq}{rgb}{1,0,0}
\definecolor{qqfftt}{rgb}{0,1,0.2}
\definecolor{zzttqq}{rgb}{0.6,0.2,0}
\definecolor{ffffqq}{rgb}{1,1,0}
\definecolor{qqffww}{rgb}{0,1,0.4}
\definecolor{xdxdff}{rgb}{0.49,0.49,1}
\definecolor{qqqqff}{rgb}{0,0,1}
\definecolor{ffqqqq}{rgb}{1,0,0}
\definecolor{ttqqff}{rgb}{0.2,0,1}
\definecolor{ttqqcc}{rgb}{0.2,0,0.8}
\definecolor{uququq}{rgb}{0.25,0.25,0.25}
\definecolor{xdxdff}{rgb}{0.49,0.49,1}
\definecolor{qqqqff}{rgb}{0,0,1}
\definecolor{ffqqtt}{rgb}{1,0,0.2}
\definecolor{qqffqq}{rgb}{0,1,0}
\definecolor{qqfftt}{rgb}{0,1,0.2}
\definecolor{ffffqq}{rgb}{1,1,0}
\definecolor{ffttcc}{rgb}{1,0.2,0.8}
\definecolor{ffwwqq}{rgb}{1,0.4,0}
\definecolor{ffqqqq}{rgb}{1,0,0}
\definecolor{ffffqq}{rgb}{1,1,0}
\definecolor{qqfftt}{rgb}{0,1,0.2}
\definecolor{ffttcc}{rgb}{1,0.2,0.8}
\definecolor{ffwwqq}{rgb}{1,0.4,0}
\definecolor{ffqqqq}{rgb}{1,0,0}
\definecolor{ffqqtt}{rgb}{1,0,0.2}
\definecolor{ffffqq}{rgb}{1,1,0}
\definecolor{ttffqq}{rgb}{0.2,1,0}
\definecolor{wwwwww}{rgb}{0.4,0.4,0.4}
\definecolor{zzqqzz}{rgb}{0.6,0,0,6}
\definecolor{wwqqww}{rgb}{0.4,0,0,4}
\definecolor{wwffqq}{rgb}{0.4,1,0}
\definecolor{fffftt}{rgb}{1,1,0.2}
\definecolor{ccqqcc}{rgb}{0.8,0,0.8}
\definecolor{ffffww}{rgb}{1,1,0.4}
\definecolor{ttfftt}{rgb}{0.2,1,0.2}
\definecolor{fffftt}{rgb}{1,1,0.2}
\definecolor{ccqqww}{rgb}{0.8,0,0.4}
\definecolor{ffffww}{rgb}{1,1,0.4}
\definecolor{ttfftt}{rgb}{0.2,1,0.2}
\definecolor{uququq}{rgb}{0.25,0.25,0.25}
\definecolor{ffqqtt}{rgb}{1,0,0.2}
\definecolor{ttffff}{rgb}{0.2,1,1}
\definecolor{fffftt}{rgb}{1,1,0.2}
\definecolor{wwffqq}{rgb}{0.4,1,0}
\definecolor{qqqqff}{rgb}{0,0,1}
\definecolor{uququq}{rgb}{0.25,0.25,0.25}
\definecolor{ffqqtt}{rgb}{1,0,0.2}
\definecolor{fffftt}{rgb}{1,1,0.2}
\definecolor{wwffqq}{rgb}{0.4,1,0}
\definecolor{qqqqff}{rgb}{0,0,1}
\definecolor{ttfftt}{rgb}{0.2,1,0.2}
\definecolor{qqqqff}{rgb}{0,0,1}
\definecolor{ffqqtt}{rgb}{1,0,0.2}
\definecolor{fffftt}{rgb}{1,1,0.2}
\definecolor{ffffqq}{rgb}{1,1,0}
\definecolor{qqfftt}{rgb}{0,1,0.2}
\tikzset { domaine/.style 2 args={domain=#1:#2} }
\tikzset{
xmin/.store in=\xmin, xmin/.default=-3, xmin=-3,
xmax/.store in=\xmax, xmax/.default=3, xmax=3,
ymin/.store in=\ymin, ymin/.default=-3, ymin=-3,
ymax/.store in=\ymax, ymax/.default=3, ymax=3,
}
\definecolor{ffffqq}{rgb}{1,1,0}
\definecolor{ttfftt}{rgb}{0.2,1,0.2}
\definecolor{qqffcc}{rgb}{0,1,0.8}
\definecolor{ffwwqq}{rgb}{1,0.4,0}
\definecolor{ffqqqq}{rgb}{1,0,0}
\definecolor{ttffqq}{rgb}{0.2,1,0}
\definecolor{qqqqff}{rgb}{0,0,1}
\definecolor{ffqqtt}{rgb}{1,0,0.2}
\definecolor{ffffqq}{rgb}{1,1,0}
\definecolor{qqfftt}{rgb}{0,1,0.2}
\numberwithin{equation}{section}
\providecommand{\U}[1]{\protect\rule{.1in}{.1in}}
\definecolor{linkcolor}{rgb}{0.00,0.50,0.00}
\providecommand{\U}[1]{\protect\rule{.1in}{.1in}}
\definecolor{ffffqq}{rgb}{1,1,0}
\definecolor{qqffqq}{rgb}{0,1,0}
\definecolor{ttttff}{rgb}{0.2,0.2,1}
\definecolor{ffqqtt}{rgb}{1,0,0.2}
\def\XXint#1#2#3{{\setbox0=\hbox{$#1{#2#3}{\int}$}
     \vcenter{\hbox{$#2#3$}}\kern-.5\wd0}}
\subjclass[2020]{26A45, 35J20, 35J25, 35A15, 49Q22}
\def\bR{\mathbb{R}}
\def\cN{{\mathcal{N}}}
\def\cH{{\mathcal{H}}}
\def\cM{{\mathcal{M}}}
\def\cR{{\mathcal{R}}}
\numberwithin{equation}{section}
\newcommand{\tarc}{\mbox{\large$\frown$}}
\newcommand{\arc}[1]{\stackrel{\tarc}{#1}}
\begin{document}
\title{The non-convex planar Least Gradient Problem}
\author{Samer Dweik} 
\address{Samer Dweik, Department of Mathematics and Statistics, College of Arts and Sciences, Qatar University,
2713, Doha, Qatar}
\email{sdweik@qu.edu.qa}

\author{Piotr Rybka}
\address{Faculty of Mathematics, Informatics and Mechanics, University of Warsaw\\ ul. Banacha 2, 02-097 Warsaw, Poland\\
ORCiD ID \url{https://orcid.org/0000-0002-0694-8201}}
\email{rybka@mimuw.edu.pl}

\author{Ahmad Sabra}
\address{Ahmad Sabra, Department of Mathematics,  Faculty of Arts and Sciences, American University of Beirut, Fisk Hall 305, PO Box 11-0236, Riad El Solh, Beirut 1107 2020 Lebanon, ORCiD ID \url{https://orcid.org/0000-0002-4669-4912}}
\email{asabra@aub.edu.lb}

\keywords{Least Gradient Problem, Optimal Transport, Beckman Problem}

\maketitle


 

\maketitle

\begin{abstract}
We study the least gradient problem in bounded regions with Lipschitz boundary in the plane.
We provide a set of conditions for the existence of solutions in non-convex simply connected regions. We assume the boundary data is continuous and in the space of functions of bounded variation, and we are interested in solutions that satisfy the boundary conditions in the trace sense. Our method relies on the equivalence of the least gradient problem and the Beckman problem which allows us to use the tools of the optimal transportation theory. 


\end{abstract}

\section{Introduction} 
In this paper, we study  the following least gradient problem in bounded Lipschitz planar domains
\begin{equation} \label{LGP}
   \min\bigg\{ \int_{\Omega} |D u| \,:\,u \in BV(\Omega),\,\,u|_{\partial \Omega}=g \bigg\},
\end{equation}
where $g$ is assumed to be continuous on $\partial \Omega$ 
and the restriction $u|_{\partial \Omega}$ represents the trace of $u$ on $\partial \Omega$. The interest in \eqref{LGP} stems from pure mathematics, e.g. as a limiting case of the $p$-harmonic functions when $p\to 1$, as well as from science and engineering. It is known that \eqref{LGP} is equivalent to the following Beckman problem also called Free Material Design problem, where one has to distribute the conductive material in an optimal way (see \cite{czle})
\begin{equation}\label{beck}
\min\left\{ \int_{\bar \Omega} | {v}|: \nabla \cdot { v} = 0,\  {v}\cdot \nu = 
\partial_\tau g\,\,\text{ on \,$\partial \Omega$}\right\},
\end{equation}
where $\nu$ is the unit normal and $\tau$ is the unit tangent to $\partial\Omega$ such that the system $(\nu, \tau)$ is positively oriented. This equivalence valid for two-dimensional problems was noted in \cite{grs} for convex domains and later in \cite{DweGor} for simply connected ones. The main idea of the equivalence proof is that $u$ is a solution to the least gradient problem if and only if the rotation of measure $Du$ by $-\pi/2$, is a solution to the Beckman problem. There the assumption that $\Omega$ is simply connected is needed to write every closed form $\omega=v_2\,dx-v_1\, dy$ with $v=(v_1,v_2)\in L^1(\Omega,\mathbb R^2)$ satisfying $\nabla \cdot v=0$ as a potential of a function $u\in W^{1,1}(\Omega)$.

In the seminal paper \cite{sternberg} the authors showed the existence of minimizes to \eqref{LGP} in case the boundary datum $g$ is continuous and the mean curvature of $\partial \Omega$ is non-negative, no part of $\partial\Omega$ is a minimal surface. In case $\Omega$ is contained 
in the plane, these conditions reduce to strict convexity of $\Omega$. This result was later relaxed in \cite{rs1} and \cite{rs2} where the domain is assumed to be a convex polygon. In that case, the polygon $\Omega$ was approximated by a sequence of strictly convex domains $\Omega_n$ with boundary data $g_n$ to ensure the convergence of the solution to \eqref{LGP} on $(\Omega_n,g_n)$ to a solution on $(\Omega,g)$. Using such an argument, sufficient conditions on $\Omega$ and $g$ were obtained to show the existence of solutions to \eqref{LGP}.


In the present paper, we exploit 
the equivalence of \eqref{LGP} and \eqref{beck} which permits us to 
address 
the existence of solutions to \eqref{LGP} when $\Omega$ is a convex domain not necessarily a polygon, but also for some classes of non-convex domains. 
Switching to \eqref{beck} changes the perspective. From this point, we are interested in the construction of an optimal transport map from the data $(\Omega, \partial_\tau g)$. In this process, we pay special attention to open arcs, where $g$ is strictly monotone, Condition (H1). Furthermore, we must ensure that all transport rays lie inside $\Omega$, this is Condition (H2).
The optimal transportation theory also provides a geometric condition on $g$, which is related to the cyclic monotonicity of the optimal transportation plan, Condition (H3). 


The first of our major 
results are about necessary and
sufficient conditions for 
the existence of solutions 
to \eqref{LGP}, see 
Theorems \ref{main theorem 
convex case} and \ref{th-necessity}. Since the structure of the set where 
$g$ is monotone matters, we 
assume that $g$ is  not only
continuous but is also in 
$BV(\partial\Omega)$. 
We first make a regularity 
assumption on $\Omega$, 
namely, we assume that the 
set of singular points of 
$\partial \Omega$ understood as points without a tangent line to $\Omega$ is negligible with respect to the measure $|f|$ where $f= \partial_\tau g$. This allows us to use the projection map in our approximation. 
Later in the paper, such a condition is relaxed. 
The final assumption on the continuous datum $g$ is that it is piecewise monotone, see Definition \ref{d-ps-mono}, i.e. there is a family of open arcs such that $g$ restricted to each of them is strictly monotone and their complement 
is a null set with respect to the measure $|f|$. In this way, we 
excluded from the data continuous functions $g$, which grow on a Cantor set. We prove in Theorem \ref{main theorem 
convex case} that under these assumptions 
there exists a unique solution. However, as Example \ref{Ex:Cantor} attests, the piecewise monotonicity is not necessary for existence. 

We also show that the existence of solutions for piecewise monotone $g$, (for which Cantor functions as data is excluded), implies that our geometric conditions are satisfied, see Theorem \ref{th-necessity}. 
In conclusion, at the expense of loss of generality of continuous data, we were able to provide a characterization of solvability of \eqref{LGP} in convex, not necessarily strictly convex regions $\Omega$.

Once we establish our existence results in the convex case, 
we show sufficient conditions for the solvability of \eqref{LGP} in non-convex domains $\Omega$. This is done in multiple stages
In the first attempt, we assume that we can partition $\Omega$ into convex sets $C_i$, on which the set of admissibility conditions (H1-H3) established for
convex $\Omega$ holds. 
Then, we can show an existence result for \eqref{LGP}, this is the content of Theorem \ref{Theorem 4.5}. A drawback of this result is that even if $\Omega$ is not convex, then nonetheless the curves $\partial C_i\cap \partial \Omega$ are convex. The main thrust of the proof of existence is to show that the optimal transportation plan does not move any mass across the boundaries of $\partial C_i$, that is no mass is transported between two distinct $C_i$'s. Our most general existence result permits curves with negative curvature.
Note that, in the non-convex case, we provide only a set of sufficient conditions, see Theorems \ref{Theorem 4.7} and \ref{thm: weak A3}.

Let us present our results in a broader context. As we stated earlier, 
the authors of \cite{sternberg} established in 1992 the basic existence for convex regions satisfying additional conditions in case of continuous data $g$. The main point is that the boundary data is attained in the trace sense. Their construction is geometric in nature. The authors of \cite{sternberg} also showed that violation of their sufficient conditions may lead to the non-existence of solutions. In 2014 Mazon et al \cite{mazon} showed the existence of a solution to a relaxed version of \eqref{LGP} for arbitrary domains and arbitrary data $g\in L^1(\partial\Omega)$. Their construction is based on the approximation by the $p$-harmonic variational problems. However, the boundary condition is attained in a very weak sense. The paper \cite{mazon} stirred renewed research on \eqref{LGP}. For example, there the authors were interested in the characterization of the trace space, i.e. the subset of $L^1(\partial\Omega)$ consisting of traces of solutions to (\ref{LGP}). It turns out that this set is strictly smaller than $L^1(\partial\Omega)$, see \cite{tamasan} and the discussion in \cite[Chapter 5]{goma}. It is interesting to note that the sufficient conditions of \cite{sternberg} were relaxed in various ways. For example, the case of unbounded $\Omega$ was studied in \cite{WG1}. Another line of research was related to weakening the convexity assumptions. The authors of \cite{grs} addressed \eqref{LGP} on a rectangular region, but Dirichlet data are specified only on a non-trivial subset. In \cite{rs1}, \eqref{LGP} in polygonal regions for continuous data was studied. The data $(\Omega, g)$ are assumed to satisfy a set of admissible assumptions which are stronger than the current set (H1-H3). This result was extended in \cite{rs2}, where 
discontinuous data were studied. 
A different approach was presented in \cite{DweGor}, where the authors studied \eqref{LGP} in an annulus 
using the equivalence of \eqref{LGP} and \eqref{beck} and the tools of the optimal transportation theory. We also mention that the least gradient problem was studied in its anisotropic form or non-homogeneous. We refer the reader interested in the state of the art to a recent book by G\'orny and Maz\'on, see \cite{goma}.

Here is the organization of the paper. Section \ref{sec:Prelim} is devoted to introducing the Beckmann problem and the tools of the optimal transportation for the associated Monge-Kantorovich problem. In Section \ref{Section Convex case}, we show that the condition (H1-H3) permits to construct an optimal transportation plan for the Monge-Kantorovich problem associated with (\ref{beck}). In particular, we show that conditions H1-H3 are necessary and sufficient for the existence of solutions to (\ref{LGP}). Section \ref{sec:nonconvex} is devoted to establishing sufficient conditions for the existence of solutions in two types of non-convex regions $\Omega$.

%
%


\vspace{0.1 cm}
\paragraph{\bf Notation}
We consider $\Omega$ having Lispchitz continuous boundary with the natural, i.e. positive orientation of its boundary. If  $\beta \subsetneq \partial\Omega$ is an arc, then we may compare its points: for $x_1, x_2\in\beta$ means that $x_1$ precedes $x_2$ in the natural orientation and we write $x_1< x_2$. We will write throughout the paper about monotonicity of $g$ defined on an arc $\beta \subsetneq \partial\Omega$ in the present context, we mean monotonicity with respect to the natural orientation of  $\partial\Omega$. When we pick points $x_1, x_2\in \partial\Omega$ to form an arc $\arc{x_1\ x_2}$ we take care to state which of the two possibilities we have in mind unless such a choice does not matter. By default, we mean open arcs unless stated otherwise. 

For points $x,y\in \bR^2$ we write $]x,y[$ to denote the open line segment with endpoints $x$ and $y$, by $[x,y]$ we will denote the closed segment. 

We denote by $\mathcal M(\overline{\Omega}, \mathbb R^2)$ the set of two dimensional Borel vector measures on $\overline \Omega$ and $\mathcal M^+(\overline{\Omega}\times \overline{\Omega})$ the set of nonnegative Borel measures on $\overline{\Omega}\times \overline{\Omega}$. 

$\Pi_x$ and $\Pi_y$ are the projection maps from $\overline{\Omega}\times \overline{\Omega}$ into the first and second component, respectively. For a measurable map $S: X \mapsto Y$ and a nonnegative Borel measure $\mu$, we write the pushforward measure $S_{\#}\mu(A)=\mu(S^{-1}(A))$ for every Borel set $A\subseteq Y$.


$\Lip_1(\overline{\Omega})$ denotes the space of Lipschitz continuous function on $\overline{\Omega}$ with Lipshitiz constant equal to 1.

For a measure $\mu$, $\mbox{spt}(\mu)$ will
denote the support of $\mu$.




\section{Preliminaries on the Beckmann problem and the Optimal Transportation Theory}\label{sec:Prelim}
The present paper benefits from the method developed in \cite{DweGor}, where the equivalence between the least gradient problem and the Beckmann problem 
was shown to hold in simply connected regions.
Before entering into the details, we introduce some definitions. First of all, we assume that $\Omega$ is an open, bounded, simply connected and Lipschitz domain of $\R^2$. Let $g$ be a given continuous function on $\partial\Omega$. Since the boundary of our domain $\Omega$ may not be smooth, then we need to define the tangential derivative of $g$.
For this purpose, 
we will denote by $\alpha$ any arc length parametrization of $\partial\Omega$ with positive orientation (i.e.,
$\alpha:[0,L)\mapsto \partial \Omega$ with $|\alpha^\prime|=1$ {a.e.}).
\begin{definition}
Let $h:\partial\Omega\mapsto \mathbb{R}$ be a Lipschitz function.
For almost every $s_0 \in [0,L)$, we set 
\begin{equation*}
\partial_\tau h(\alpha(s_0)) = \frac{\mathrm{d}}{\mathrm{d}s} [h(\alpha(s))]_{|s=s_0}.
\end{equation*}
\end{definition}
In particular, $\partial_\tau h$ is well-defined $\cH^1$-a.e. on $\partial\Omega$, provided that $h$ is Lipschitz. Now, we will extend this definition to less regular functions on $\partial\Omega$. More precisely, 
we define the distributional tangential derivative in an obvious way as a functional over Lipschitz continuous functions. 

\begin{definition}\label{df-dist}
We say that $g: \partial\Omega\mapsto \mathbb{R}$ belongs to
$BV(\partial\Omega)$ provided that $g \in L^1(\partial\Omega)$ and the distributional tangential derivative of $g$ is a measure with a finite total variation, i.e. there exists a measure denoted by \,$\partial_\tau g$ with finite total variation ($|\partial_\tau g|(\partial\Omega)<\infty$) such that for all functions $h \in \mbox{Lip}(\partial\Omega)$, we have
\begin{equation}\label{r1}
 \int_{\partial \Omega} h \, \mathrm{d}[\partial_\tau g]=-\int_{\partial\Omega} g \cdot \partial_\tau h \, \mathrm{d}\mathcal{H}^1.     
\end{equation}
\end{definition}

From now on, we always assume that \,$g \in BV(\partial\Omega)\cap C(\partial \Omega)$. So, we may introduce {more carefully }the so-called Beckmann problem, see (\ref{beck}):
\begin{equation} 
\label{Beckmann}\min\bigg\{\int_{\overline{\Omega}} |v|\,:\,v \in \mathcal{M}(\overline{\Omega}, \mathbb{R}^2),\,\nabla \cdot v= f
\ 
\,\mbox{in}\,\ 
\overline{\Omega}\bigg\}.
\end{equation}
Here, $f:=\partial_\tau g$ is understood in accordance with Definition \ref{df-dist} and the divergence is taken in the distributional sense in $\bR^2$, i.e. $-\int_{\overline{\Omega}} \nabla \varphi \cdot \mathrm{d}v=\int_{\partial\Omega} \varphi\,\mathrm{d}f$, for all $\varphi \in C^1(\overline{\Omega})$. We note that the definition of $f$ implies that this  measure on $\partial\Omega$ is such that 
\begin{equation}\label{zeromass}
f(\partial\Omega)=0.
\end{equation} 

The equivalence we announced at the beginning of this section reads as follows, see \cite[Theorem 3.4]{DweGor} for the proof.

\begin{proposition}\label{t-equiv} 
Suppose that $\Omega\subset \bR^2$ is simply connected and with Lipschitz boundary. If $g\in  C(\partial\Omega)\cap BV(\partial\Omega)$ and $f = \partial_\tau g,$ then Problems (\ref{LGP}) and (\ref{Beckmann}) are equivalent
in the following sense:\\
(1) The values of the infima are equal,  i.e. (\ref{LGP}) = (\ref{Beckmann}).\\
(2) Given a solution $u \in BV (\Omega)$ of (\ref{LGP}), we can construct $v \in \cM(\Omega, \bR^2 )$ a solution of (\ref{Beckmann}) by 
$v = R_{\frac\pi 2} Du$, where $R_{\frac\pi2}$ is the rotation of the plane by \,$\frac\pi2$.\\
(3) Given a solution  $v \in \cM(\overline{\Omega}, \bR^2 )$ of (\ref{Beckmann}) with $|v|(\partial\Omega) =0$, we can construct  $u \in BV (\Omega)$ a solution of (\ref{LGP}) with $v = R_{\frac\pi 2} Du$.
\end{proposition}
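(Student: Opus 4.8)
The plan is to realize the rotation $v = R_{\frac\pi2}Du$, with inverse $Du = R_{-\frac\pi2}v$ (where $R_{-\frac\pi2}=R_{\frac\pi2}^{-1}$), as an energy-preserving bijection between the admissible competitors of \eqref{LGP} and \eqref{Beckmann}; the equality of infima in (1) and the correspondences in (2)--(3) then come out together. First I would isolate the two elementary identities driving everything. Since $R_{\frac\pi2}$ is a linear isometry, $|R_{\frac\pi2}Du|=|Du|$ as measures, so $\int_{\overline\Omega}|v|=\int_\Omega|Du|$; this is energy preservation. Writing $R_{\frac\pi2}(a,b)=(-b,a)$, the field $R_{\frac\pi2}Du=(-\partial_y u,\partial_x u)$ is divergence free in the interior because $-\partial_{xy}u+\partial_{yx}u=0$, while dually $R_{-\frac\pi2}v=(v_2,-v_1)$ satisfies $\nabla\times(v_2,-v_1)=-\nabla\cdot(v_1,v_2)$, so it is curl free exactly when $v$ is divergence free. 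These must be read distributionally, in the sense of Definition \ref{df-dist} and of the divergence in \eqref{Beckmann}: integrating by parts against $\varphi\in C^1(\overline\Omega)$, the boundary contribution of $R_{\frac\pi2}Du$ equals, using $\tau=R_{\frac\pi2}\nu$, the tangential derivative of the trace of $u$, with the sign working out precisely against the convention $-\int\nabla\varphi\cdot\mathrm{d}v=\int\varphi\,\mathrm{d}f$.

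For (2) I would take a minimizer $u$ of \eqref{LGP}, set $v=R_{\frac\pi2}Du\in\mathcal{M}(\overline\Omega,\mathbb{R}^2)$, and check admissibility: $v$ is a vector measure of the same total variation as $Du$, it is divergence free in $\Omega$ by the first identity, and its distributional divergence equals $f=\partial_\tau g$ because the trace of $u$ is $g$. The isometry identity then gives $\int_{\overline\Omega}|v|=\int_\Omega|Du|$, hence $\inf\eqref{Beckmann}\le\inf\eqref{LGP}$.

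For (3), which I expect to be the crux, I would start from a solution $v$ of \eqref{Beckmann} with $|v|(\partial\Omega)=0$ and build a potential. The measure $w:=R_{-\frac\pi2}v$ is distributionally curl free, and the role of the simple-connectedness hypothesis is a Poincar\'e-type lemma at the level of measures: a closed vector measure on a simply connected Lipschitz domain is the distributional gradient $Du$ of some $u\in BV(\Omega)$, with $|Du|=|w|=|v|$ and $R_{\frac\pi2}Du=v$. The free integration constant is then fixed so that the trace of $u$ equals $g$; here the hypothesis $|v|(\partial\Omega)=0$ is essential, since it guarantees that $w$ places no singular mass on $\partial\Omega$ that would obstruct the trace matching. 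This yields $\inf\eqref{LGP}\le\inf\eqref{Beckmann}$, which together with (2) closes (1); and since $u\mapsto R_{\frac\pi2}Du$ preserves energy, it carries minimizers to minimizers in both directions, giving (2) and (3) as stated.

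The main obstacle is precisely this measure-theoretic Poincar\'e lemma in (3): converting the pointwise vector-calculus identities into a rigorous $BV$ potential whose rotated gradient equals the prescribed vector measure, while simultaneously controlling its boundary trace. Simple connectedness is the topological input making the closed form exact, and $|v|(\partial\Omega)=0$ is what reconciles the interior potential with the prescribed datum $g$; both are exactly the ingredients one should expect to carry the weight of the argument, and I would lean on \cite[Theorem 3.4]{DweGor} for the technical execution.
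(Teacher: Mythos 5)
Your proposal is correct and takes essentially the same route as the paper: the paper gives no self-contained proof of this proposition but cites \cite[Theorem 3.4]{DweGor}, whose argument is precisely the rotation-by-$\frac\pi2$ correspondence you sketch (isometry of total variation, divergence-free $\leftrightarrow$ curl-free, simple connectedness yielding exactness of the closed measure, and $|v|(\partial\Omega)=0$ reconciling the interior potential with the trace datum $g$). Since you likewise defer the measure-theoretic Poincar\'e lemma and trace-matching technicalities to that same reference, your outline matches the paper's treatment.
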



Hence, in order to prove the existence of a solution to Problem \eqref{LGP}, we just need to show the existence of a solution $v$ to the Beckmann problem \eqref{Beckmann}, which gives zero mass to the boundary. 

Let us comment on the solvability of (\ref{Beckmann}). It is well known, see e.g. \cite[Chapter 4]{Santambrogio},
that for convex domains $\Omega$ the Beckmann problem \eqref{Beckmann} is equivalent to the following Monge-Kantorovich problem: 
\begin{equation}\label{Kantorovich}\min\bigg\{\int_{\overline{\Omega} \times \overline{\Omega}} |x-y|\,\mathrm{d}\gamma\,:\,\gamma \in  \mathcal{M}^+(\overline{\Omega} \times \overline{\Omega}),\,(\Pi_x)_{\#}\gamma=f^{+} \,\,\mbox{and}\,\,(\Pi_y)_{\#} \gamma =f^{-}\bigg\},
\end{equation}
where $f^+$ and $f^-$ are the positive and negative parts of $f$. Moreover,  it is known, see \cite{Santambrogio}, that for any bounded domain $\Omega$, not necessarily convex, the optimal transport problem \eqref{Kantorovich} has  a dual formulation: 
\begin{equation} \label{dual}
    \sup\bigg\{\int_{\overline{\Omega}} \phi\,\mathrm{d}(f^+ - f^-)\,:\,\phi\in \mbox{Lip}_1(\overline{\Omega})\bigg\}.
\end{equation}
Due to the duality \,$\eqref{Kantorovich}=\eqref{dual}$, we get that if $\gamma$ is an optimal transport plan in Problem \eqref{Kantorovich} and $\phi$ is a Kantorovich potential , i.e. a maximizer for Problem \eqref{dual}, then from  \cite[Chapter 3]{Santambrogio} we have the following equality: 
\begin{equation} \label{transport ray}
\phi(x)- \phi(y)=|x-y|,\,\,\,\,\,\mbox{for all}\,\,\,\,(x,y) \in \mbox{spt}(\gamma).
\end{equation}
Any maximal line segment $[x,y]$ that satisfies the equality \eqref{transport ray} will be called a {\it{transport ray}}. In other words, any optimal transport plan $\gamma$ has to move the mass $f^+$ to the destination $f^-$ along these transportation rays. Due to \eqref{transport ray}, one can show that if $\phi$ is a Kantorovich potential then $\phi$ is differentiable in the interior of any transport ray $]x,y[$ with $\nabla \phi(z)=\frac{x-y}{|x-y|}$, for all $z \in ]x,y[$, see \cite{Santambrogio}. In particular, two different transport rays cannot intersect at an interior point of one of them.

Coming back to Problem \eqref{Beckmann}, 
we see that we always have the following inequality $\eqref{dual} \leq \eqref{Beckmann}$ even if the domain $\Omega$ is not convex. Indeed, if $v$ is admissible in Problem \eqref{Beckmann} and $\phi$ is a 1-Lip smooth function on $\overline{\Omega}$, then we must have
$$\int_{\overline{\Omega}} \phi\,\mathrm{d}(f^+ - f^-)=-\int_{\overline{\Omega}} \nabla\phi\cdot\mathrm{d}v \leq \int_{\overline{\Omega}}|v|.$$
Now, let us assume that the domain $\Omega$ is convex. Then, one can show that $\eqref{Beckmann}=\eqref{Kantorovich}$, see for instance, \cite[Chapter 4]{Santambrogio}. 
In fact, from an optimal transport plan $\gamma$ of Problem \eqref{Kantorovich}, one can construct a minimal vector field for Problem \eqref{Beckmann} by considering the vector field $v_\gamma$ which is defined as follows:
\begin{equation}\label{flow}
\lb v_\gamma,\xi\rb :=\int_{\overline{\Omega}\times\overline{\Omega}}\int_0^1 \xi((1-t)x+t y)\cdot (y-x)\,\mathrm{d}t\,\mathrm{d}\gamma(x,y),\,\,\mbox{for all}\,\,\,\,\xi \in C(\overline{\Omega},\mathbb{R}^2).
\end{equation}
Moreover, 
we associate a scalar measure $\sigma_\gamma$, called {\it{transport density}}, with  vector measure $v_\gamma$. Measure $\sigma_\gamma$ represents the amount of transport taking place in each region of $\overline{\Omega}$. This measure 
is defined as follows:
\begin{equation}\label{Transport density definition}
\lb \sigma_\gamma,\varphi \rb:=\int_{\overline{\Omega}\times\overline{\Omega}}\int_0^1 \varphi((1-t)x+t y) |x-y|\,\mathrm{d}t\,\mathrm{d}\gamma(x,y),\,\,\mbox{for all}\,\,\varphi \in C(\overline{\Omega}).
\end{equation}
It is clear that 
convexity of the domain $\Omega$ {suffices for}  the transport density $\sigma_\gamma$ (or the vector measure $v_\gamma$) {to be} well defined. In fact, one can see easily that this vector field $v_\gamma$ is admissible in Problem \eqref{Beckmann}. In addition, if $\phi$ is a Kantorovich potential between $f^+$ and $f^-$ then we have
$v_\gamma=-\sigma_\gamma \nabla \phi$. In particular, we get that
$\int_{\overline{\Omega}}|v_\gamma|=\int_{\overline{\Omega}\times\overline{\Omega}} |x-y|\,\mathrm{d}\gamma(x,y)=\eqref{Kantorovich}$ and so, $v_\gamma$ minimizes Problem \eqref{Beckmann}. Moreover, one can show that any optimal vector field $v$ of Problem \eqref{Beckmann} is of the form $v=v_\gamma$, for some optimal transport plan $\gamma$, see \cite[Theorem 4.13]{Santambrogio}.

Hence, the question of the existence of 
a solution $u$ for Problem \eqref{LGP} 
becomes equivalent to whether 
 the transport density $\sigma_\gamma$ gives zero mass to the boundary $\partial\Omega$ or not. 
If $\Omega$ is strictly convex, then
we can easily deduce from (\ref{Transport density definition}) that
$$
\sigma_\gamma(\partial\Omega) = \int_{\partial\Omega\times \partial\Omega}
\cH^1([x,y]\cap \partial\Omega)\, d\gamma(x,y) =0.
$$
Consequently, Problem \eqref{LGP} has a solution $u$ as soon as $\Omega$ is strictly convex. 
However, when $\Omega$ is only assumed to be convex but not necessarily strictly convex, 
it is not true in general that $\sigma_\gamma(\partial\Omega)=0$. As a result, a solution $u$ for Problem \eqref{LGP} may not exist, for example this happens when $\Omega:=[0,1]^2$, $f^+:=\chi_{[0,\frac{1}{2}] \times \{0\}}\LC\cH^1$ and $f^-:=\chi_{[\frac{1}{2},1] \times \{0\}}\LC\cH^1$. \\


At this point, we state an observation, which we will frequently use in the sequel.

\begin{lemma}\label{l-conv}
Let us suppose that $\{ f^+_n\}_{n \geq 1}$ and \,$\{ f^-_n\}_{n \geq 1}$ are two sequences of data for the Monge-Kan\-to\-ro\-vich problem (\ref{Kantorovich}) and, $\{\gamma_n\}_{n \geq 1}$ (resp. $\{\phi_n\}_{n \geq 1}$)  is a sequence of corresponding optimal transportation plans (resp. Kantorovich potentials such that \,$\phi_n(x_0)=0$\, for a fixed point \,$x_0\in \Omega$). Then, we have the following: \\ 
(1) \,If \,$f^+_n \rightharpoonup f^+$ and $f^-_n \rightharpoonup f^-$, then there exists a subsequence such that $\gamma_{n_k} \rightharpoonup \gamma$ weakly and $\phi_{n_k}\to \phi$ uniformly. Moreover, $\gamma$ is an optimal transportation plan for $f^+$, $f^-$ and $\phi$ is the corresponding Kantorovich potential.\\ 
(2) \,If \,$[x_n,y_n]$ is a transportation ray between $f_n^+$ and $f_n^-$ such that \,$x_n\to x$ and $y_n\to y$, then $[x,y]$ is a transportation ray between $f^+$, $f^-$.
\end{lemma}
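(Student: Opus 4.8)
The plan is to prove the two assertions of Lemma \ref{l-conv} by a standard compactness-and-stability argument, exploiting the weak convergence of the data together with the uniform Lipschitz control on the Kantorovich potentials. The whole argument lives on the compact set $\overline\Omega$, so tightness and boundedness will be automatic; the work is in identifying the limits correctly.

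\textbf{Part (1).} First I would establish the weak convergence of the plans. Since $f_n^+\rightharpoonup f^+$ and $f_n^-\rightharpoonup f^-$, the total masses $\gamma_n(\overline\Omega\times\overline\Omega)=f_n^+(\overline\Omega)$ are uniformly bounded, and all $\gamma_n$ are supported on the fixed compact set $\overline\Omega\times\overline\Omega$. Hence $\{\gamma_n\}$ is tight, and by Prokhorov's theorem a subsequence satisfies $\gamma_{n_k}\rightharpoonup\gamma$ for some nonnegative measure $\gamma$. A routine check, testing against functions of the form $\varphi(x)$ and $\psi(y)$ and passing to the limit, shows the marginals pass to the limit, so $(\Pi_x)_\#\gamma=f^+$ and $(\Pi_y)_\#\gamma=f^-$; thus $\gamma$ is admissible for the limiting problem. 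Next I would treat the potentials: each $\phi_n\in\Lip_1(\overline\Omega)$ with $\phi_n(x_0)=0$, so by Arzelà--Ascoli (equi-Lipschitz, equibounded on the compact $\overline\Omega$) a further subsequence converges uniformly to some $\phi\in\Lip_1(\overline\Omega)$.

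\textbf{Optimality of the limits.} This is the heart of Part (1). I would pass to the limit in the optimality relation. On one hand, the cost $\int|x-y|\,\dd\gamma$ is weakly continuous because $(x,y)\mapsto|x-y|$ is bounded and continuous on $\overline\Omega\times\overline\Omega$, so $\int|x-y|\,\dd\gamma_{n_k}\to\int|x-y|\,\dd\gamma$. On the other hand, the dual values $\int\phi_n\,\dd(f_n^+-f_n^-)$ converge to $\int\phi\,\dd(f^+-f^-)$, using the uniform convergence of $\phi_{n_k}$ together with the weak convergence of the data (here one writes the difference as $\int(\phi_{n_k}-\phi)\,\dd(f_{n_k}^+-f_{n_k}^-)+\int\phi\,\dd(f_{n_k}^+-f_{n_k}^-)$ and controls the first term by $\|\phi_{n_k}-\phi\|_\infty$ times the bounded masses). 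Since for each $k$ we have $\eqref{Kantorovich}=\eqref{dual}$, i.e. $\int|x-y|\,\dd\gamma_{n_k}=\int\phi_{n_k}\,\dd(f_{n_k}^+-f_{n_k}^-)$, the limits agree, so $\int|x-y|\,\dd\gamma=\int\phi\,\dd(f^+-f^-)$. By the duality $\eqref{Kantorovich}=\eqref{dual}$ for the limit data, this equality forces $\gamma$ to be optimal and $\phi$ to be a Kantorovich potential.

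\textbf{Part (2) and the main obstacle.} For the transport rays I would argue via the characterization \eqref{transport ray}. Since $[x_n,y_n]$ is a transport ray between $f_n^+,f_n^-$, one has $\phi_n(x_n)-\phi_n(y_n)=|x_n-y_n|$; I expect the cleanest route is to use that the ray endpoints lie in $\spt(\gamma_n)$, or alternatively to exploit that on the ray $\phi_n$ realizes its maximal slope. Passing to the limit using $x_n\to x$, $y_n\to y$ and the uniform convergence $\phi_{n_k}\to\phi$ (after extracting along the subsequence of Part (1)) gives $\phi(x)-\phi(y)=|x-y|$, which is exactly \eqref{transport ray} for the limit potential; by the definition in the text, $[x,y]$ is then a transport ray between $f^+,f^-$. \emph{The main obstacle} is the bookkeeping around subsequences and the precise sense in which ``$[x_n,y_n]$ is a transport ray between $f_n^+,f_n^-$'' connects to the potential $\phi_n$: one must make sure the same subsequence $n_k$ used for $\gamma$ and $\phi$ in Part (1) can be used in Part (2), and that the endpoint relation $\phi_n(x_n)-\phi_n(y_n)=|x_n-y_n|$ genuinely holds at the endpoints (not merely in the open segment), which follows from continuity of $\phi_n$ and of the norm. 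I would handle this by stating Part (2) for the subsequence extracted in Part (1), so that the uniform convergence of potentials is available, and then the limit of the equality is immediate.
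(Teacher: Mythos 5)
Your proposal is correct and follows essentially the same route as the paper: weak compactness of the plans plus Arzel\`a--Ascoli for the equi-Lipschitz potentials, passing to the limit in the duality identity $\int|x-y|\,\mathrm{d}\gamma_n=\int\phi_n\,\mathrm{d}(f_n^+-f_n^-)$ to conclude optimality of $\gamma$ and that $\phi$ is a Kantorovich potential, and then obtaining Part (2) by taking limits in $\phi_n(x_n)-\phi_n(y_n)=|x_n-y_n|$ along the extracted subsequence. Your extra care about the convergence of the dual values and about reusing the same subsequence in Part (2) only makes explicit what the paper leaves implicit.
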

\begin{proof}
Since measures $\gamma_n$ are uniformly bounded, we can select a subsequence (not relabeled) converging weakly to $\gamma \in \cM^+(\overline{\Omega}\times \overline{\Omega})$. Moreover, the marginals are preserved,
$$
\lb f^+, \varphi\rb  = \lim_{n\to\infty}\lb f^+_n, \varphi\rb = 
\lim_{n\to\infty}\int_{\overline{\Omega}\times\overline{\Omega}} \varphi(x)\, d\gamma_n(x,y) 
= \int_{\overline{\Omega}\times\overline{\Omega}} \varphi(x)\, d\gamma(x,y)
$$
and the same argument is also valid for $f^-$. Namely, we integrate the map $(x,y)\mapsto\varphi(y)$
with respect to $\gamma$. 

Since  $\phi_n(x_0)=0$, then $\phi_n$, up to a subsequence, converges uniformly to a function $\phi \in \Lip_1(\overline{\Omega})$. Due to the duality $\eqref{Kantorovich}=\eqref{dual}$, we have
$$\int_{\overline{\Omega} \times \overline{\Omega}}|x-y|\,\mathrm{d}\gamma_n=\int_{\overline{\Omega}} \phi_n\,\mathrm{d}(f^+_n - f^-_n).$$
Letting $n \to \infty$, we get that $\gamma$ is an optimal transport plan between $f^+$ and $f^-$. Moreover, $\phi$ is the corresponding Kantorovich potential, because
$$
\eqref{Kantorovich} 
\leq \int_{\overline{\Omega} \times \overline{\Omega}} |x-y|\,\mathrm{d}\gamma
=\int_{\overline{\Omega}} \phi\,\mathrm{d}(f^+-f^-) \leq \eqref{dual}.
$$
This concludes the proof of Part (1). Part (2) follows immediately from Part (1) and \eqref{transport ray}, because of the following
$$\phi(x) - \phi(y)=\lim_{n\to \infty} \phi_n(x_n) - \phi_n(y_n)=\lim_{n\to \infty} |x_n - y_n|=|x-y|. \qedhere
$$ 
\end{proof}

Finally, we conclude this section by a few remarks about the endpoints of transportation rays. Of course, they all belong to the support of $f = \partial_\tau g$, which is contained in $\partial\Omega$. It may happen that $x\in \partial \Omega$ is an endpoint of more than one transportation ray. We shall call such points the {\it multiple points}.
So, we have the following:
\begin{lemma}\label{l-neg-N}
Let us suppose that \,$\Omega$ is convex, then the set of multiple points \,$\mathcal{N}$ is at most countable. In particular, this set \,$\mathcal{N}$ is $f^+-$negligible (i.e. $f^+(\mathcal{N})=0$) as soon as $f^+$ is nonatomic. 
\end{lemma}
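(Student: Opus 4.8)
The plan is to reduce everything to the countability of $\mathcal{N}$; the final assertion then follows at once, because a nonatomic measure assigns zero mass to every singleton, so $f^+(\mathcal{N})=\sum_{x\in\mathcal{N}}f^+(\{x\})=0$ once $\mathcal{N}$ is known to be countable. Thus the whole content is to show that at most countably many boundary points can be a common endpoint of two distinct transport rays. The single structural input I would use is the non-crossing property recorded just before the lemma: two distinct transport rays meet, if at all, only at a shared endpoint.

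To exploit this, I would first check that the union $\mathcal{R}$ of all transport rays is closed in $\bom$: a limit of transport rays is again a transport ray by Lemma \ref{l-conv}(2), and all endpoints lie in the compact set $\partial\Omega$, so $\mathcal{R}$ is compact. Hence $U:=\bom\setminus\mathcal{R}$ is open, and as an open planar set it has at most countably many connected components. Moreover, since the transport rays are pairwise non-crossing chords of the convex set $\Omega$, each component $G$ of $U$ is disjoint from every chord and therefore lies in a single open half-plane determined by each chord; consequently $G$ equals the intersection of $\Omega$ with a family of half-planes, and is in particular convex. The key geometric fact I would then invoke is that the boundary of a bounded planar convex set is differentiable outside an at most countable set of corner points, because the outward normal direction varies monotonically along the boundary and hence has only countably many jumps.

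The main step is to attach to each multiple point $x$ a corner of one of these convex components. Picking two transport rays issuing from $x$, they bound a nondegenerate open angular sector at $x$; if some subsector is free of transport rays it meets $U$, and then $x$ is a genuine corner of the adjacent component $G$. Since each convex component has at most countably many corners and there are only countably many components, this accounts for at most countably many multiple points. The delicate case, which I expect to be the main obstacle, is when every sector at $x$ is completely filled by transport rays. Here one must distinguish the situation where those rays all emanate from $x$ (a focal point, whose open fan is then disjoint from the fans of all other focal points by non-crossing, so that such points are again countable) from the situation where rays issued by \emph{other} boundary points accumulate at $x$ without passing through it. Ruling out or handling this accumulation, and thereby making the assignment $x\mapsto G$ well defined and countable-to-one in every case, is the crux of the argument.
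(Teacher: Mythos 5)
Your reduction of the lemma to the countability of $\mathcal{N}$ (and the deduction $f^+(\mathcal{N})=0$ from nonatomicity) is fine, and the supporting steps you sketch — closedness of the union $\mathcal{R}$ of transport rays via Lemma \ref{l-conv}(2), convexity of the connected components of the complement, countability of the corners of a planar convex set — can all be made rigorous. But the proof is incomplete exactly where you say it is: a multiple point $x$ at which every angular subsector meets $\mathcal{R}$, while the rays actually passing through $x$ do not fill a fan, is assigned neither a corner of a component of $U$ nor a focal fan, so it escapes your counting entirely. Since the whole argument is an injection into a countable set, leaving even one class of multiple points unassigned is a genuine gap, not a technicality; as submitted, the lemma is not proved.

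The gap can in fact be closed, and the missing observation is elementary. Fix two rays $[x,y_1]$, $[x,y_2]$ at $x$ and let $S$ be the region bounded by them and by the arc $\arc{y_1y_2}$ not containing $x$ (the rays are not collinear, by maximality of transport rays, so $S$ is nondegenerate). A transport ray meeting the interior of $S$ but not passing through $x$ cannot cross $[x,y_1]$ or $[x,y_2]$ at interior points, and its endpoints lie on $\partial\Omega$; hence both endpoints lie on the closed arc $\arc{y_1y_2}$, so the ray lies in the convex hull of that arc, which by convexity of $\Omega$ sits in the closed half-plane bounded by the line through $y_1,y_2$ on the side away from $x$. Consequently no such ray enters $B(x,r)\cap S$ with $r=\dist(x,[y_1,y_2])>0$, and inside this ball your dichotomy (ray-free subsector versus focal fan) is exhaustive — your ``case 2b'' is vacuous. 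You should be aware, though, that the paper's own proof avoids all of this machinery: to each multiple point $x_0$ with rays $[x_0,x_1]$, $[x_0,x_2]$ it attaches the open arc $\arc{x_1 x_2}$ not containing $x_0$; non-crossing of transport rays forces the arcs attached to distinct multiple points to be disjoint, and a family of pairwise disjoint nonempty open arcs of $\partial\Omega$ is at most countable. That one-paragraph argument needs neither the closedness of $\mathcal{R}$, nor components of $U$, nor convexity and corner counting, so even after your gap is repaired the route you propose is considerably longer than necessary.
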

\begin{proof}
Suppose that $x_0$ is a multiple point, then there are two distinct points $x_1$ and $x_2$ in $\partial \Omega$ such that $[x_0, x_1]$ and $[x_0, x_2]$ are transportation rays. In particular, the arc $\arc{x_1 x_2}$ has positive $\cH^1$ measure (here, we assume that $x_0 \notin \,\arc{x_1 x_2}$). Moreover, if $x_0 \neq \tilde{x}_0$ are multiple points then their corresponding arcs $\arc{x_1 x_2}$ and $\arc{\Tilde{x}_1 \Tilde{x}_2}$ must be disjoint. But on $\partial\Omega$, we can fit at most a countable number of such disjoint open arcs corresponding to multiple points. Hence, the set of multiple points \,$\mathcal{N}$ is at most countable. $\qedhere$
\end{proof}

Throughout the paper, we will assume that $f^+$ is nonatomic, in fact, this follows immediately from the continuity of $g$. 
\begin{definition}
Let $\gamma$ be an optimal transport plan in Problem \eqref{Kantorovich}, we associate to $\gamma$ the multivalued map $R_\cN:\spt f^+\mapsto \mathcal P(\partial \Omega)$ as follows
$$R_{\cN}(x):=\{y\in \partial \Omega:[x,y] \hbox{ is a transportation ray}\}.$$   
In virtue of Lemma \ref{l-conv} the graph of $R_{\cN}$ is closed, and so from \cite[Chapter 18]{Aliprantis}, $R_{\cN}$ admits a Borel selector which we will denote by $R$ and call it a {\it  transportation map}. 
\end{definition}   

We make the following observation about $R$.


\begin{lemma}\label{l-cR} 
Suppose that 
an open arc 
\,$G \subset \spt f^+$ does not intersect \,$\cN.$ Then, $R$ is continuous on $G$ and $R(G)$ is an arc. Moreover, $R(G)\cap \cN =\emptyset.$
\end{lemma}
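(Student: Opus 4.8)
The plan is to first observe that the hypothesis $G\cap\cN=\emptyset$ means that for every $x\in G$ the set $R_{\cN}(x)$ is a singleton, so the Borel selector $R$ is genuinely single-valued on $G$ and may be treated as an honest function there. For continuity I would argue via the closed graph and compactness: if $x_n\to x$ in $G$, then by compactness of $\partial\Omega$ some subsequence $R(x_{n_k})$ converges to a point $y$; since the graph of $R_{\cN}$ is closed (as recorded after the Definition following Lemma \ref{l-conv}, and which is Part (2) of that lemma in spirit), the segment $[x,y]$ is a transport ray, so $y\in R_{\cN}(x)=\{R(x)\}$ and $y=R(x)$. As every subsequence of $R(x_n)$ has a further subsequence converging to the same limit $R(x)$, the full sequence converges and $R$ is continuous on $G$.

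Next I would show that $R(G)$ is an arc, using the non-crossing property recorded after \eqref{transport ray}: two distinct transport rays cannot meet at an interior point of either. First, $R$ is injective on $G$. Indeed, if $x_1<x_2$ in $G$ satisfied $R(x_1)=R(x_2)=y$, then the two rays $[x_1,y]$ and $[x_2,y]$ would bound a triangular region whose third side is the arc $\arc{x_1 x_2}$, and non-crossing would force the ray issued from every $x''\in\,\arc{x_1 x_2}$ to terminate at $y$ as well; hence a whole subarc of positive $f^+$-mass would be sent to the single point $y$, producing an atom of $f^-$ and contradicting the continuity of $g$ (which, as noted after Lemma \ref{l-neg-N}, makes $f^{\pm}$ nonatomic). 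Given injectivity and continuity, the non-crossing property further pins down the cyclic order of the endpoints $R(x)$ relative to the $x$'s, so that $R$ is strictly monotone along $G$. A strictly monotone continuous map of the arc $G$ is a homeomorphism onto its image, and therefore $R(G)$ is an arc.

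Finally, and this is where I expect the \emph{main obstacle} to lie, I would establish the disjointness $R(G)\cap\cN=\emptyset$. Suppose, for contradiction, that $y=R(x)\in\cN$ for some $x\in G$; then there is a second transport ray $[x',y]$ with $x'\neq x$. Since $G$ is open and $R$ is a homeomorphism onto the arc $R(G)$, the point $y$ is interior to $R(G)$, so $R(G)$ contains points of $\partial\Omega$ on the side of $y$ facing $x'$. I would then choose $x''\in G$ close to $x$ so that $R(x'')$ lies strictly between $y$ and $x'$ along $\partial\Omega$; with this placement the two chords $[x'',R(x'')]$ and $[x',y]$ have interleaved endpoints on the boundary of the convex set $\Omega$, hence they must cross at an interior point, contradicting the non-crossing property of transport rays. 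Thus no $y\in R(G)$ can be a multiple point, and $R(G)\cap\cN=\emptyset$.

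The genuinely delicate step throughout is translating the geometric non-crossing of chords in a convex domain into precise statements about the cyclic order of their endpoints on $\partial\Omega$ (the interleaved versus nested dichotomy); once that bookkeeping is set up, both the monotonicity of $R$ and the final crossing contradiction follow, while the remaining ingredients—continuity from the closed graph, the arc property from connectedness, and the exclusion of atoms from the continuity of $g$—are comparatively soft.
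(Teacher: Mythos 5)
Your proposal is correct and follows essentially the same route as the paper: both arguments rest on the non-crossing of transport rays in a convex domain, the closed-graph property of $R_{\cN}$ coming from Lemma \ref{l-conv}, and a chord-crossing (interleaved endpoints) contradiction to obtain $R(G)\cap\cN=\emptyset$. The only organizational differences are that you derive continuity directly from compactness plus the closed graph and single-valuedness of $R_{\cN}$ off $\cN$ (the paper instead proves monotonicity first and then excludes jump points via Lemma \ref{l-conv}), and you make explicit, via nonatomicity of $f^-$, an injectivity step that the paper's strict-monotonicity claim leaves implicit; the substance is the same.
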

\begin{proof}
Let us suppose that $x_1, x_2\in G$ with $x_1<x_2$, then $R(x_1)> R(x_2)$, otherwise the transportation rays $[x_1, R(x_1)]$ and $[x_2, R(x_2)]$ would intersect, thanks to the convexity of $\Omega$. Since $R$ is monotone, then it is continuous except at a countable set of jump points. Suppose that $x_0\in G$ is a point, where $R$ has a jump. In this case Lemma \ref{l-conv} implies that $x_0$ is a multiple point, contrary to our assumption. Hence, continuity follows and $R(G)$ is an arc.

Suppose now that $y\in R(G)\cap \,\cN,$ then there exist $w_1$, $w_2$ such that $[w_1,y]$, $[w_2,y]$ are transportation rays and $w_1\in  
G$. This means that $[w_2, y]$ must intersect some transportation rays with endpoints near $y$. Thus, we reached a contradiction, again we use the convexity of $\Omega$. 
\end{proof}
We end this section with the following Lemma.
\begin{lemma}\label{l-2.7}
Let us suppose that \,$\Omega$\, is strictly convex, $f^\pm$ are as defined above and \,$\gamma$ is an optimal transportation plan, then $\gamma$ is concentrated on the graph of $R$. Moreover, $\gamma$ is unique. 
\end{lemma}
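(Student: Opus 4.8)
The plan is to prove the concentration statement first and then obtain uniqueness from it by a standard midpoint argument. Throughout I would fix an optimal plan $\gamma$ together with an associated Kantorovich potential $\phi$, so that \eqref{transport ray} holds on $\spt\gamma$.

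For the concentration, I would argue that $\gamma$-almost every pair $(x,y)$ lies on the graph of $R$. Since the marginals of $\gamma$ are $f^+$ and $f^-$, both concentrated on $\partial\Omega$, a $\gamma$-generic pair $(x,y)$ has $x,y\in\partial\Omega$ and, by \eqref{transport ray}, satisfies $\phi(x)-\phi(y)=|x-y|$. Strict convexity of $\Omega$ enters precisely here: the chord $[x,y]$ has its open part $]x,y[$ contained in the interior of $\Omega$ and cannot be prolonged inside $\overline\Omega$ (a line meets a strictly convex body in a single chord), so $[x,y]$ is a maximal segment on which \eqref{transport ray} holds, i.e.\ a transport ray, whence $y\in R_{\mathcal N}(x)$; moreover the source point $x\in\partial\Omega$ is necessarily an \emph{endpoint} of this ray rather than an interior point. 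I would then invoke Lemma \ref{l-neg-N}: continuity of $g$ makes $f^+$ nonatomic, so $f^+(\mathcal N)=0$, and since $(\Pi_x)_\#\gamma=f^+$ we get $\gamma(\mathcal N\times\overline\Omega)=0$; thus $x\notin\mathcal N$ for $\gamma$-a.e.\ $(x,y)$. For such $x$, being the endpoint of exactly one transport ray forces $R_{\mathcal N}(x)$ to be a singleton, and combining $y\in R_{\mathcal N}(x)$ with $R(x)\in R_{\mathcal N}(x)$ yields $y=R(x)$. Hence $\gamma$ is concentrated on the graph of $R$, and in fact $\gamma=(\id,R)_\#f^+$.

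For uniqueness, I would take two optimal plans $\gamma_1,\gamma_2$ and set $\gamma=\tfrac12(\gamma_1+\gamma_2)$. This $\gamma$ has the correct marginals and is again optimal, because the cost is linear in the plan and the admissible set is convex, so the set of minimizers is convex. Applying the concentration result to $\gamma$ gives a Borel selector $R$ whose graph carries $\gamma$. Since $\gamma_i\le 2\gamma$, each $\gamma_i$ is concentrated on the same graph, and a measure concentrated on the graph of a measurable map with first marginal $f^+$ must equal $(\id,R)_\#f^+$. Therefore $\gamma_1=(\id,R)_\#f^+=\gamma_2$.

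The delicate point, and where I expect the only real obstacle, is the reduction to endpoints in the second paragraph: one must rule out that a source point $x\in\spt f^+$ is an interior point of a transport ray. This is exactly the phenomenon that occurs for the flat-boundary example $\Omega=[0,1]^2$ recorded earlier (where a whole boundary segment becomes a single ray and uniqueness fails), and it is precisely strict convexity that excludes it by forcing transport rays to be genuine chords with interior inside $\Omega$. Once this is secured, the remainder is bookkeeping with the marginals and the single-valuedness of $R_{\mathcal N}$ off the $f^+$-null set $\mathcal N$.
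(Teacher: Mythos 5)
Your proof is correct, and your concentration step is essentially the paper's own argument: both reduce to Lemma \ref{l-neg-N} to discard the $f^+$-null set $\mathcal{N}$ and then use strict convexity to force the second coordinate of a $\gamma$-generic pair to be the unique ray endpoint $R(x)$ (you are in fact more explicit than the paper about why $[x,y]$ is a \emph{maximal} segment, i.e.\ a genuine transport ray with $x$ as an endpoint rather than an interior point). Where you diverge is the uniqueness step. The paper argues directly that $R$ is plan-independent: transportation rays, hence $R_{\mathcal{N}}$ and its Borel selector $R$, are determined by a Kantorovich potential $\phi$ alone, so every optimal plan must coincide with $(\id,R)_{\#}f^+$. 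You instead use the midpoint trick: $\tfrac12(\gamma_1+\gamma_2)$ is again optimal by linearity of the cost and convexity of the admissible set, the concentration result provides one graph carrying it, and $\gamma_i\le 2\gamma$ forces both plans onto that same graph, whence $\gamma_1=(\id,R)_{\#}f^+=\gamma_2$. Your route is slightly longer but more robust: it never has to address whether the selector produced by the concentration argument could vary with the plan or with the choice of potential, an issue the paper's one-line justification quietly absorbs; the paper's route is shorter and exhibits the unique plan explicitly as a functional of $\phi$. Both write-ups omit the same routine remark that the diagonal is $\gamma$-null (which follows from $f^+\perp f^-$), so that the generic pair indeed has $x\neq y$ and spans a nondegenerate chord.
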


\begin{proof}
Thanks to Lemma \ref{l-neg-N}, we see that for $\gamma-$almost every couple $(x,y)\in(\partial\Omega)^2$, the point $x$ does not belong to $\mathcal{N}$. As a result, there is a unique transport ray starting at $x$ and going to the support of $f^-$. 
Since \,$\Omega \subset \mathbb{R}^2$\, is strictly convex, then this ray must intersect $\spt(f^-)$ at exactly one point $R(x)$. This yields that $\gamma$ is concentrated on the graph of the map $R$. In particular, we have 
\begin{equation}\label{r-Rfp}
R_{\#}f^+=f^-
\end{equation}
because for all $\varphi \in C(\overline{\Omega})$, one has
$$\int_{\overline{\Omega}} \varphi(y)\,\mathrm{d}f^-(y)=\int_{\overline{\Omega} \times \overline{\Omega}} \varphi(y)\,\mathrm{d}\gamma(x,y)=\int_{\spt \gamma} \varphi(R(x))\,\mathrm{d}\gamma(x,y)=\int_{\overline{\Omega}} \varphi(R(x))\,\mathrm{d}f^+(x).$$


At the same time, we infer 
that the optimal transport plan $\gamma$ is unique, because it is uniquely determined by the map $R$ which depends only on the Kantorovich potential $\phi$ and it is a Borel selector of $R_\mathcal{N}$ which is unique up to the negligible set of multiple points,  $\mathcal{N}$. $\qedhere$
\end{proof}

\section{Necessary and sufficient conditions for existence and uniqueness in the convex case}
\label{Section Convex case}

In this section, we assume that $\Omega$ is an open bounded and convex (but not necessarily strictly convex) domain in $\R^2$ and the trace $g$ is in $C(\partial \Omega)\cap BV(\partial \Omega)$. Under these conditions, we aim to find necessary and sufficient conditions for the existence of a solution $u$ to the least gradient problem \eqref{LGP}. We also show that this solution $u$ is in fact unique.

\subsection{{Admissibility conditions}}
First of all, we introduce our admissibility conditions (H1), (H2), and (H3) on the boundary datum $g$. Let us start with the first condition: \\

 $\bullet$ {\bf{Condition (H1).}} 
Suppose that there are three (possibly infinite) index sets \,$I_\Gamma,\,I_\chi,\,I_F\subseteq \mathbb N$ such that the boundary $\partial \Omega$ can be decomposed, up to a $|f|-$negligible set,  into disjoint open arcs $\Gamma_i^{\pm}=\arc{a_i^{\pm}b_i^{\pm}}$ ($i\in I_\Gamma$), \,$\chi_i^{\pm}=\arc{c_i c_i^{\pm}}$ ($i\in I_\chi$) and 
$F_i$ ($i\in I_F$) such that: \\
\begin{enumerate}
\item For every $i\in I_\Gamma$, we have \,$\dist(\Gamma_i^+,\Gamma_i^-){:= \inf\{|x-y|:\ x\in \Gamma_i^+,\ y\in \Gamma_i^-\}}>0$, $g$ is strictly increasing on $\Gamma_i^+$ and strictly decreasing on $\Gamma_i^-$  with $$TV(g|_{\Gamma_i^+}) =  TV(g|_{\Gamma_i^-}).$$
For the sake of convenience, we assume that $g(a_i^+)=g(a_i^-)<g(b_i^+)=g(b_i^-)$.\\
\item For every $i\in I_\chi$, we have {$\overline{\chi_i^+}\cap\overline{\chi_i^-} =\{c_i\}$},
$g$ is strictly increasing on $\chi_i^+$ and strictly decreasing on $\chi_i^-$ with
$$TV(g|_{\chi_i^+}) =  TV(g|_{\chi_i^-}).$$
\item For every $i\in I_F$, 
the boundary datum $g$ is constant  on $F_i$. Moreover, each $F_i$ is maximal with this property, i.e. if an open arc $\alpha\supset F_i$ is such that $g|_\alpha$ is constant, then $\alpha = F_i$. We shall say that $F_i$ is a flat part.\\
\item  For every $i\in I_\Gamma \cup I_{\chi}$, we denote by \,$T_i$\, the convex hull of \,$\Gamma_i^+$ and $\Gamma_i^-$ and $D_i$ the convex hull of $\chi_i^+$ and $\chi_i^-$. Then, we assume that the sets $\{T_i,\,D_i:i\in I_\Gamma \cup I_\chi\}$ are mutually disjoint.\\
\end{enumerate}

{Keeping (H1)} 
in mind, we introduce the following notation:
$\Gamma_i=\Gamma_i^+\cup \Gamma_i^-$ ($i \in I_\Gamma$), $\chi_i=\chi_i^+\cup \{c_i\} \cup\chi_i^-$ ($i \in I_\chi$), $\Gamma^{\pm}=\bigcup_{i\in I_\Gamma} \Gamma_i^{\pm}$, $\chi^{\pm}=\bigcup_{i\in I_\chi}\chi_i^{\pm}$, $\Gamma=\Gamma^+ \cup \Gamma^-$ and $\chi=\chi^+ \cup \chi^-$. \\

In order to introduce the other assumptions (H2) and (H3) on the boundary datum $g$, we need to define a transport map
${\bf T}:\Gamma^+ \cup \chi^+\mapsto \Gamma^- \cup \chi^-$  
which will be a good candidate to be the optimal transport map between $f^+$ and $f^-$. So, we proceed as follows:\\
\begin{equation}\label{df-T}{\bf{T}}(x^+)=\begin{cases}
x^-\in \chi_i^- & \mbox{if}\,\,\,\,x^+ \in \chi_i^+\,\,\,\,\,\mbox{and}\,\,\,\,\,TV(g|_{\arc{\,\,c_i\,x^+}})=TV(g|_{\arc{\,\,c_i\,x^-}}),\,\,\,\,\,i \in I_\chi,\\
x^-\in \Gamma_i^- & \mbox{if}\,\,\,\,x^+ \in \Gamma_i^+\,\,\,\,\,\mbox{and}\,\,\,\,\,TV(g|_{\arc{\,\,a_i^+\,x^+}})=TV(g|_{\arc{\,\,a_i^-\,x^-}}),\,\,\,\,i \in I_\Gamma.
\end{cases}
\end{equation}
Here, we take the arcs $\arc{\,c_i\,x^\pm} \subset \chi_i^\pm$\, and $\arc{\,a_i^\pm\,x^\pm} \subset \Gamma_i^\pm$. Due to the fact that $g$ is strictly increasing on $\chi_i^+$ (resp. $\Gamma_i^+$) and strictly decreasing on $\chi_i^-$ (resp. $\Gamma_i^-$) and \,$TV(g|_{\chi_i^+})=TV(g|_{\chi_i^-})$ (resp. $TV(g|_{\Gamma_i^+})=TV(g|_{\Gamma_i^-})$), one can see that the map ${\bf T}$ is well defined and it is also one-to-one (see Lemma \ref{cont-T} below). {Hence, the inverse map of \,$\bf{T}$ is well defined and} we will denote it by ${\bf{T}}^{[-1]}$. On the other hand, it is clear that by construction, ${\bf{T}}$ is a transport map from $f^+$ to $f^-$, i.e. ${\bf{T}}_{\#}f^+=f^-$.
\begin{lemma}\label{cont-T}
For every \,$i\in I_\chi$ (resp. $i\in I_\Gamma$), the restriction of the {transportation} map ${\bf T}$ to each arc $\chi^+_i$ {(resp.  \,$\Gamma_i^+$)} is a homeomorphism onto $\chi_i^-$ {(resp. $\Gamma_i^-$)}.
\end{lemma}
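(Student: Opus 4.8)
The plan is to realize $\mathbf{T}|_{\chi_i^+}$ explicitly as a composition of two elementary homeomorphisms coming from the cumulative variation of $g$, and then to invoke the classical fact that a continuous strictly monotone map between intervals is a homeomorphism. I would treat the case $i\in I_\chi$ in detail, since the case $i\in I_\Gamma$ is identical after replacing the common base point $c_i$ by the pair $a_i^\pm$ (for which the matching condition in \eqref{df-T} reads $g(x^+)-g(a_i^+)=g(x^-)-g(a_i^-)$).

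First I would record that, because $g$ is continuous and strictly increasing on $\chi_i^+$ and strictly decreasing on $\chi_i^-$, the total variation of $g$ over any subarc issuing from $c_i$ is just the oscillation of $g$ there. Accordingly I define
$$V^+(x^+) := TV(g|_{\arc{c_i x^+}}) = g(x^+) - g(c_i), \qquad x^+\in \chi_i^+,$$
$$V^-(x^-) := TV(g|_{\arc{c_i x^-}}) = g(c_i) - g(x^-), \qquad x^-\in \chi_i^-.$$
Precomposing with an arc-length parametrization of each arc (itself a homeomorphism onto an interval), the maps $V^+$ and $V^-$ become continuous and strictly increasing functions of the parameter. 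Their images are the intervals $(0,\,TV(g|_{\chi_i^+}))$ and $(0,\,TV(g|_{\chi_i^-}))$, which coincide precisely because of the balance hypothesis $TV(g|_{\chi_i^+})=TV(g|_{\chi_i^-})$ imposed in (H1). Hence each of $V^+,V^-$ is a continuous strictly monotone bijection onto a common interval $J$, and therefore a homeomorphism onto $J$; in particular $(V^-)^{-1}$ is continuous.

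Finally, the defining relation \eqref{df-T} says exactly that $\mathbf{T}(x^+)$ is the unique point of $\chi_i^-$ with $V^-(\mathbf{T}(x^+))=V^+(x^+)$, that is $\mathbf{T}|_{\chi_i^+}=(V^-)^{-1}\circ V^+$. As a composition of two homeomorphisms, $\mathbf{T}|_{\chi_i^+}$ is then a homeomorphism onto $\chi_i^-$, with inverse $\mathbf{T}^{[-1]}|_{\chi_i^-}=(V^+)^{-1}\circ V^-$. The step I view as the crux is the continuity of the inverse $(V^-)^{-1}$: this is where one must use that $V^-$ is continuous, strictly monotone, and defined on an interval, so that its set-theoretic inverse has no jump discontinuities and is automatically continuous. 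I would also note that whether the arcs are taken open or closed is harmless here, since the equality of total variations guarantees that $V^+$ and $V^-$ share the same image regardless of whether the endpoints $c_i$ and $c_i^\pm$ are included, so the composition is well defined and surjective in either convention.
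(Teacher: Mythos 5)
Your proof is correct, but it takes a different route from the paper's. The paper works directly on $\bbT$: it first shows that $\bbT|_{\chi_i^+}$ is strictly monotone (order-reversing), because $\bbT(x_1)=\bbT(x_2)$ would force $g$ to be constant on $\arc{x_1x_2}$; a monotone map has at most countably many jump discontinuities, and a jump at $x_0$ would force $g$ to be constant on the arc $\arc{\bbT(x_0^+)\,\bbT(x_0^-)}\subset \chi_i^-$, contradicting the strict monotonicity of $g$ there; finally, a continuous strictly monotone map between arcs has a continuous inverse. Your factorization $\bbT|_{\chi_i^+}=(V^-)^{-1}\circ V^+$ through the cumulative-variation maps uses the same underlying ingredients (continuity and strict monotonicity of $g$ on both arcs, plus the balance $TV(g|_{\chi_i^+})=TV(g|_{\chi_i^-})$ to identify the common image interval), but packages them into a single appeal to the classical fact that a continuous strictly monotone bijection between intervals is a homeomorphism. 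What your organization buys is that injectivity, surjectivity, continuity, and continuity of the inverse all come at once, and the well-definedness of $\bbT$ (asserted before the lemma in the paper but not detailed) becomes explicit; the paper's jump-exclusion step is essentially your continuity of $(V^-)^{-1}$ in disguise.

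One slip worth fixing: your two signed formulas cannot both hold. If $c_i$ is a local minimum of $g$ (the configuration of the paper's square example), then $g(x^\pm)>g(c_i)$ on both arcs, so $TV(g|_{\arc{c_i x^-}})=g(x^-)-g(c_i)$, not $g(c_i)-g(x^-)$; in the local-maximum configuration it is instead the formula for $V^+$ that flips sign. Taken literally, the stated formulas would give $V^+$ and $V^-$ images on opposite sides of $0$, and the composition $(V^-)^{-1}\circ V^+$ would be undefined. This does not damage the proof, because the operative definition of $V^\pm$ is the total variation itself, i.e.\ the oscillation $|g(\cdot)-g(c_i)|$, which is nonnegative, continuous, and strictly increasing in the arc parameter measured from $c_i$ in either configuration; with that reading, the defining relation \eqref{df-T} indeed says $V^-(\bbT(x^+))=V^+(x^+)$ and your argument goes through verbatim (note that it also holds, as you state, in the $\Gamma$ case, where both variations are correctly written as differences from the base points $a_i^\pm$).
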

\begin{proof}
The restriction of ${\bf T}$ to $\chi^+_i$ is strictly monotone. In fact, if $x_1,\,x_2\in \chi^+_i$ and $x_1<x_2$ with respect to the positive orientation of $\partial \Omega$, then $\bbT(x_1) > \bbT(x_2)$ (if $\bbT(x_1)= \bbT(x_2)$ then this would mean that $g$ is constant on $\arc{{x_1}{x_2}}$, which is impossible). Hence, $\bbT$ is continuous except for at most countably many points.
However, if $\bbT$ has a jump at $x_0 \in \chi^+_i$, then $g$ would be constant on the arc $\arc{ \bbT(x_0^+)\bbT(x_0^-)} \subset \chi_i^-$, where $\bbT(x_0^+):=\lim_{x \to x_0,\,x>x_0} \bbT(x)$ and $\bbT(x_0^-):=\lim_{x \to x_0,\,x<x_0} \bbT(x)$. But, this is impossible due to the strict monotonicity of $g$ on $\chi_i^-$. 


Since $\bbT$ is continuous and strictly monotone, then we deduce that the inverse is also continuous and strictly monotone as desired. 

The same argument applies to $\Gamma_i^+$. $\qedhere$
\end{proof}

Now, we continue stating our assumptions. Since the domain \,$\Omega$\, is not necessarily strictly convex, we want to prohibit any transport ray from gliding along the boundary $\partial \Omega$. So, we impose the following condition:\\

$\bullet$ {\bf{Condition (H2).}} For every $x^+ \in \Gamma^+ \cup \chi^+$, the open segment $]x^+,{\bf{T}}(x^+)[$ is contained in 
$\Omega$. \\

Finally, since we need 
${\bf T}$ to be 
an optimal transport map between $f^+$ and $f^-$, where $f=\partial_\tau g$, so we make the following assumption that will be crucial in the course of proving this fact:\\

$\bullet$ {\bf{Condition (H3).}} Let us consider any finite sequence of points $\{e_{i}^+\}_{1 \leq i \leq m}$ (where $m \in \mathbb{N}$) in $\Gamma^+\cup \chi^+$, 
then we assume the
following inequality:
\begin{equation}\label{rH3}
\sum_{i=1}^m |e_i^+ - {\bf{T}}(e_i^+)|
<
\sum_{i=1}^{m-1} 
|e_i^+ - {\bf{T}}(e_{i+1}^+)| + |e_m^+ - {\bf{T}}(e_{1}^+)|.
\end{equation}
We note that this condition is related to the cyclical monotonicity property which is satisfied by any optimal transport plan $\gamma$ of Problem \eqref{Kantorovich} (here, $\gamma=(Id,{\bf{T}})_{\#}f^+$), see \cite[Theorem 1.38]{Santambrogio}. We also remark that this condition is weaker than \cite[Condition (C2)]{rs1} and so, this allows us to extend the class of boundary data for which we can deduce the existence of solutions to \eqref{LGP}, see Example \ref{Example1} below.

Now, we are ready to begin our work on the existence of a solution to Problem \eqref{LGP}. For this purpose, we divide our task into several parts. First, we assume that $I_{\Gamma}$ and $I_{\chi}$ are finite 
 and we solve our problem in the strictly convex case and so, by means of approximation of the domain $\Omega$, we address the case of convex domains that are not necessarily strictly convex. 
Finally, by approximating the boundary data, we complete the analysis considering the case when the number of arcs is infinite.


Before starting, we note that existence of a unique  solution
to Problem \eqref{LGP}  for strictly convex domains $\Omega$ and continuous data $g$ is well-know, see \cite{sternberg}. However, our idea here is to characterize this solution in order to be able to pass to the limit in the convex case. 

\subsection{The case of strictly  convex domain}

In this section, we will show that thanks to 
conditions (H1) and (H3), ((H2) is trivial here), we can construct a 
solution to Problem (\ref{LGP}). 
Since we know that the transport rays are the boundaries of the level sets of the solutions to Problem (\ref{LGP}), we would like to identify these rays or equivalently, to characterize the optimal transport map $R$. More precisely, our bet is that if $x\in \Gamma^+ \cup \chi^+$, then 
the segment $[x,{\bf T}(x)]$ is  a transport ray and so, $R(x)={\bf T}(x)$\, for $f^+-$a.e. $x$. Establishing this fact is the main result of this section.

\begin{proposition}\label{strictly_convex_case} 
Assume that \,$\Omega$ is bounded strictly convex and \,$g\in BV(\partial \Omega)\cap C(\partial\Omega)$ satisfies conditions (H1) $\&$ (H3), where the sets $I_\chi$ $\&$ $I_\Gamma$ are finite. We set $f= \partial_\tau g$. Let \,$\gamma$\, be the optimal transport plan for Problem \eqref{Kantorovich} between $f^+$ and $f^-$.
For \,$\gamma-$a.e. $(x^+,x^-) \in \partial\Omega \times \partial\Omega$, we either have \,$x^+ \in \chi_i^+$ and \,$x^-={\bf{T}}(x^+) \in \chi_i^-$ (\,$i\in I_\chi$) or \,$x^+ \in \Gamma_i^+$ and \,$x^-={\bf{T}}(x^+) \in \Gamma_i^-$ (\,$i\in I_\Gamma$). In other words, we have $\gamma=(Id,{\bf{T}})_{\#}f^+$.
\end{proposition}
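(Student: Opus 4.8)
The plan is to exhibit the candidate plan $\tilde\gamma := (Id,{\bf T})_{\#}f^+$, prove it is optimal, and then close the argument by uniqueness. First I would check that $\tilde\gamma$ is admissible in \eqref{Kantorovich}. Since ${\bf T}$ is continuous on each of the finitely many arcs $\Gamma_i^+,\chi_i^+$ (Lemma \ref{cont-T}), it is Borel, so the push-forward is well defined; by construction $(\Pi_x)_{\#}\tilde\gamma=f^+$ and $(\Pi_y)_{\#}\tilde\gamma={\bf T}_{\#}f^+=f^-$, and the cost is finite because $|x-{\bf T}(x)|\le \diam\Omega$. I would also record that, because $g$ is strictly increasing on each $\Gamma_i^+,\chi_i^+$, strictly decreasing on each $\Gamma_i^-,\chi_i^-$, and constant on each flat part $F_i$, the measure $f^+$ is concentrated on $\Gamma^+\cup\chi^+$ and $f^-$ on $\Gamma^-\cup\chi^-$; hence $\tilde\gamma$ is concentrated on the graph $S:=\{(x,{\bf T}(x)):x\in\Gamma^+\cup\chi^+\}$.

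The heart of the argument is to show that $\tilde\gamma$ is optimal via cyclical monotonicity. Reading (H3) with the cost $c(x,y)=|x-y|$, the inequality \eqref{rH3} says precisely that, for every finite family $(e_i^+,{\bf T}(e_i^+))\in S$, one has $\sum_i c(e_i^+,{\bf T}(e_i^+))\le\sum_i c(e_i^+,{\bf T}(e_{i+1}^+))$ read cyclically, i.e. the set $S$ is $c$-cyclically monotone (in fact strictly so). Since $c$ is continuous on the compact set $\overline\Omega\times\overline\Omega$ and $\tilde\gamma$ has finite cost and is concentrated on the $c$-cyclically monotone set $S$, the characterization of optimal plans through cyclical monotonicity \cite[Theorem 1.38]{Santambrogio} yields that $\tilde\gamma$ is a solution of \eqref{Kantorovich}.

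Finally, strict convexity of $\Omega$ enters through uniqueness: by Lemma \ref{l-2.7} the optimal transport plan for $f^+,f^-$ is unique, condition (H2) being automatic here since the open chord between any two distinct boundary points of a strictly convex domain lies in its interior. Consequently the optimal plan $\gamma$ of the statement must coincide with $\tilde\gamma$, which is exactly the assertion $\gamma=(Id,{\bf T})_{\#}f^+$; in particular $\gamma$-a.e. coupling $(x^+,x^-)$ satisfies $x^-={\bf T}(x^+)$, lying in the arc paired with the one containing $x^+$. The step I expect to be the main obstacle is the clean application of the cyclical-monotonicity criterion: one must verify that the concentration set $S$ (rather than merely its closure, which could add the finitely many arc endpoints) is the correct $c$-cyclically monotone carrier, and that \eqref{rH3}, stated only for points of $\Gamma^+\cup\chi^+$, covers $f^+$-almost every point. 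Both issues are settled by the remark above that $f^+$ charges no point outside $\Gamma^+\cup\chi^+$, together with the fact that the optimality criterion requires concentration on, not support equal to, a cyclically monotone set.
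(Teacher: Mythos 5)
Your proof is correct, but it takes a genuinely different route from the paper's. You exhibit the candidate plan $\tilde\gamma=(Id,\bbT)_{\#}f^+$, observe that by (H1) the measure $f^+$ is concentrated on $\Gamma^+\cup\chi^+$ (so $\tilde\gamma$ is admissible and concentrated on the graph of $\bbT$), read (H3) as $c$-cyclical monotonicity of that graph for $c(x,y)=|x-y|$, invoke the \emph{sufficiency} half of the cyclical-monotonicity characterization for continuous costs to conclude that $\tilde\gamma$ is optimal, and close with uniqueness of the optimal plan on strictly convex domains (Lemma \ref{l-2.7}). The paper argues in the opposite direction: it starts from the optimal plan $\gamma$, which by Lemma \ref{l-2.7} is induced by a map $R$, and proves $R=\bbT$ $f^+$-a.e. by a self-contained geometric argument — mass balance across transport rays (Lemma \ref{lzero}), finiteness of the set of multiple points (Lemma \ref{lem:Finite double point}), and a chain/loop construction (Lemmas \ref{l4.5}, \ref{l4.6}, \ref{l4.7}) in which (H3) enters only once, through the Kantorovich potential and \eqref{transport ray}, to exclude nontrivial loops. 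Your route is shorter and more modular, and it nowhere uses the finiteness of $I_\chi$ and $I_\Gamma$, so it actually proves a slightly stronger statement; the trade-offs are that you must quote the converse cyclical-monotonicity theorem in the correct direction (what you need is ``a plan concentrated on a $c$-cyclically monotone set for a continuous cost is optimal'', which for $|x-y|$ on a compact set is classical and available near the result the paper cites in \cite{Santambrogio}), and you should restate \eqref{rH3} with ``$\le$'' and allow repeated points, since that is what $c$-cyclical monotonicity formally requires (families with repetitions reduce to shorter ones, so this is cosmetic). What the paper's longer proof buys is Remark \ref{Remark 3.7}: the loop argument shows that for strictly convex $\Omega$ it suffices to verify (H3) on families of points lying on pairwise distinct arcs, so that $m$ may be taken bounded by the number of arcs independently of any approximation parameter; this uniform refinement is exploited in Step 3 of Proposition \ref{Prop. convex case with finite arcs}, and it does not follow from your argument, because cyclical monotonicity requires the inequality for arbitrary finite families, including several points on the same arc.
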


Before we embark on proving this proposition, we lay out our tools. An important part of our argument is monitoring how the boundary $\partial\Omega$ is divided by a transportation ray. Namely, we notice that any point $e^+$ and its image $ R(e^+)$ defined in Lemma \ref{l-2.7}) separate $\partial\Omega$ into two parts of zero measure $f$. More precisely, we have the following.
\begin{lemma}\label{lzero}
If \,$e^+\in \chi^+_i$, $i\in I_\chi$ (resp. $e^+\in \Gamma^+_i$, $i\in I_\Gamma$), then $f(\arc{\,e^+ R(e^+)}) =0$.
\end{lemma}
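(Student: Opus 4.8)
The plan is to use the non-crossing property of transport rays in a convex domain, recorded just after \eqref{transport ray}: two distinct transport rays cannot meet at an interior point of either one. The key point is that the single ray $\ell := [e^+, R(e^+)]$ then cuts $\overline{\Omega}$ into two pieces between which $\gamma$ moves no mass, and mass balance forces the signed measure $f$ of each piece to vanish.

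First I would record the geometric splitting. The segment $\ell$ is a transport ray (by the definition of $R$ as a selector of $R_\cN$), and being a chord of the convex set $\overline{\Omega}$ it divides $\overline{\Omega}$ into two closed convex regions meeting exactly along $\ell$; correspondingly it divides $\partial\Omega$ into two open arcs. Write $A:=\arc{e^+ R(e^+)}$ for one of them and $B$ for the other, so that $\partial\Omega = A \sqcup B \sqcup \{e^+, R(e^+)\}$. Since $g$ is continuous, both $f^+$ and $f^-$ are nonatomic, so the two separating endpoints carry no $f^\pm$-mass.

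Next I would prove that $\gamma$ exchanges no mass between the two arcs, i.e.\ $\gamma(A\times B)=\gamma(B\times A)=0$. For $\gamma$-a.e.\ $(x,y)$ the segment $[x,y]$ is a transport ray, and we may also assume $x,y \notin \{e^+, R(e^+)\}$ (a $\gamma$-null condition, by nonatomicity), so that $[x,y]\neq \ell$. Suppose such a pair had $x\in A$ and $y\in B$. Then $x$ lies in one of the two closed regions and $y$ in the other; since $[x,y]\subset\overline{\Omega}$ by convexity and the two regions meet only along $\ell$, the segment $[x,y]$ must cross $\ell$. This crossing point cannot be an endpoint of $\ell$ (otherwise that endpoint would be an interior point of $[x,y]$ lying on the distinct ray $\ell$), nor an interior point of $\ell$ (two distinct transport rays would then meet in the interior); either way we contradict the non-crossing property. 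Hence no such pair occurs and $\gamma(A\times B)=0$, and symmetrically $\gamma(B\times A)=0$.

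Finally I would conclude by mass balance. As $\gamma$ is concentrated on $\partial\Omega\times\partial\Omega$ and assigns no mass to $A\times B$, to $B\times A$, nor to the null sets $A\times\{e^+,R(e^+)\}$ and $\{e^+,R(e^+)\}\times A$, the marginal conditions give
$$ f^+(A)=\gamma(A\times\partial\Omega)=\gamma(A\times A)=\gamma(\partial\Omega\times A)=f^-(A). $$
Thus $f(\arc{e^+ R(e^+)})=f^+(A)-f^-(A)=0$; by \eqref{zeromass} the complementary arc has zero $f$-measure as well, so the choice of arc is immaterial. I expect the only delicate point to be the crossing argument in the middle step, where the endpoint cases must be excluded using the non-crossing property together with the nonatomicity of $f^\pm$.
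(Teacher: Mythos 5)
Your proof is correct, and it rests on the same geometric core as the paper's own argument: the ray $\ell=[e^+,R(e^+)]$ splits $\partial\Omega$ into two arcs, convexity plus the non-crossing property of transport rays forbids any transfer of mass between them, and mass balance then forces $f$ of each arc to vanish. The difference is in execution. The paper phrases the no-exchange step through the map $R$, showing $R(E_i)\subset E_i$, and closes with the pushforward identity $R_\# f^+=f^-$ from Lemma \ref{l-2.7} (which is where strict convexity enters), summing the resulting inequalities $f^-(E_i)\le f^+(E_i)$ over the two arcs and using $f^+(\partial\Omega)=f^-(\partial\Omega)$. You instead work with the plan $\gamma$ itself, proving $\gamma(A\times B)=\gamma(B\times A)=0$ and reading the conclusion directly off the marginal conditions; this is marginally more self-contained, since it never requires $\gamma$ to be concentrated on the graph of a map. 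One small point, common to both write-ups, deserves a sentence: the non-crossing property only applies to \emph{distinct} maximal rays, so the degenerate case where $[x,y]$ and $\ell$ lie on the same line must be excluded. In the setting of Proposition \ref{strictly_convex_case} this is immediate from strict convexity (a line meets $\partial\Omega$ in at most two points, so collinearity of $x,y,e^+,R(e^+)$ would force $\{x,y\}=\{e^+,R(e^+)\}$, which your nonatomicity reduction already rules out), but it is worth stating, since your phrase ``the distinct ray $\ell$'' presumes exactly this.
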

\begin{proof}
Let us note that the definition of the arc $\arc{e^+R(e^+)}$ is ambiguous, however, the result holds independently of the chosen orientation.

We consider the transportation ray $[e^+,R(e^+)]$. Its endpoints separate $\partial \Omega$ into two open sets $E_1,\,E_2\subset \partial \Omega.$ 
We claim that 
$R(E_i)\subset E_i$, for $i=1,\,2$. Indeed, if there is a point $x \in E_1$ such that $R(x) \in E_2$
then, thanks to the convexity of \,$\Omega$, the transportation rays $[e^+,R(e^+)]$ and $[x,R(x)]$ must intersect, but this is impossible. 

Since \,$R^{-1}(E_i)\subset E_i$ ($i=1,\,2$) and due to Lemma \ref{l-2.7}  we have $f^- = R_\#f^+$, then  we see $f^-(E_i) = f^+(R^{-1}(E_i)) \leq f^+(E_i)$, $i=1,\,2$. In particular, we get that $f^-(\partial\Omega)=f^-(E_1)+f^-(E_2) \leq f^+(E_1)+f^+(E_2)=f^+(\partial\Omega)$. But, we know that $f^+(\partial\Omega)=f^-(\partial\Omega)$. Hence, $f^+(E_i)=f^-(E_i)$, $i=1,\,2$. $\qedhere$
\end{proof}

Keeping in mind  the setting of Proposition \ref{strictly_convex_case} we also make the following observation.
\begin{lemma}\label{lem:Finite double point}
Under the conditions of Proposition \ref{strictly_convex_case}, the set of multiple points \,$\mathcal N$
is finite.
\end{lemma}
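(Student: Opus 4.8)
The plan is to show that every element of $\mathcal N$ is either a \emph{source} multiple point, lying in $\spt f^+$ and serving as the common endpoint of a fan of rays, or a \emph{sink} multiple point, lying in $\spt f^-$, and then to bound each type by the number of complementary arcs (``gaps'') of the supports $\spt f^\pm$, which is finite under the present hypotheses. Recall from Lemma \ref{l-2.7} that, $\Omega$ being strictly convex, $\gamma$ is concentrated on the graph of a map $R$ with $R_\#f^+=f^-$, and from the proof of Lemma \ref{l-cR} that $R$ is monotone: if $x_1<x_2$ lie in $\spt f^+\setminus\mathcal N$ then $R(x_1)>R(x_2)$, for otherwise the two rays would cross, contradicting convexity. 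By Lemma \ref{l-conv}(2) together with monotonicity, a source multiple point is precisely a jump point of $R$, its two one-sided limits $R(x_0^-),R(x_0^+)$ being the extreme directions of the fan; symmetrically a sink multiple point is a jump point of the (also monotone) inverse of $R$.

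The key step is that a jump of $R$ skips an $f^-$-null arc. Suppose $R$ jumps at $x_0$ and set $y^-=R(x_0^-)$, $y^+=R(x_0^+)$, letting $J=\arc{y^+ y^-}$ be the open arc strictly between them. Monotonicity forces $R(x)\notin J$ for every $x\neq x_0$ in the domain of $R$: points $x<x_0$ map beyond $y^-$ and points $x>x_0$ map beyond $y^+$. Hence $R^{-1}(J)\subseteq\{x_0\}$, and since $f^+$ is nonatomic,
$$ f^-(J)=f^+\bigl(R^{-1}(J)\bigr)\le f^+(\{x_0\})=0. $$
Thus $J$ is an $f^-$-null open arc whose endpoints $y^\pm$ lie in the (closed) set $\spt f^-$, being limits of points $R(x)\in\spt f^-$. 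Consequently $J$ is itself a maximal $f^-$-null open arc, i.e.\ a gap of $\spt f^-$, and distinct jumps of $R$ produce disjoint such gaps by monotonicity. This yields an injection from the set of source multiple points into the set of gaps of $\spt f^-$.

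It remains to count the gaps. Because $g$ is strictly monotone on each $\Gamma_i^-$ and each $\chi_i^-$, every sub-arc of these carries positive variation, hence positive $|f^-|$-mass; therefore $\spt f^-=\overline{\Gamma^-\cup\chi^-}$. As $I_\Gamma$ and $I_\chi$ are finite, this is a finite union of closed arcs, so its complement in $\partial\Omega$ consists of finitely many open arcs. Hence there are finitely many gaps, and so finitely many source multiple points. Exchanging the roles of $f^+$ and $f^-$ (the inverse of $R$ is monotone with inverse-pushforward $f^+$, and $\spt f^+=\overline{\Gamma^+\cup\chi^+}$ is again a finite union of arcs) bounds the sink multiple points by the gaps of $\spt f^+$, again finitely many. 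Since a transport ray $[x,y]$ satisfies $\phi(x)-\phi(y)=|x-y|$, each endpoint lies in $\spt f^+$ or $\spt f^-$, so every point of $\mathcal N$ is a source or a sink multiple point, the only possible overlap being the finitely many junctions where the arcs $\Gamma_i^\pm,\chi_i^\pm$ abut. Therefore $\mathcal N$ is finite.

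The main obstacle is the key step, namely identifying each fan with a genuine gap of the support. The point to get right is that it is the monotonicity of $R$ (rather than a delicate chord-crossing argument) that guarantees the skipped arc $J$ has empty preimage and is thus $f^-$-null, together with the observation that the strict monotonicity of $g$ forces $\spt f^\pm$ to be exactly the finite unions of closed arcs $\overline{\Gamma^\pm\cup\chi^\pm}$, so that only finitely many gaps are available to inject into.
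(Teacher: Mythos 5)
Your proof is correct and is essentially the paper's own argument in different bookkeeping: the decisive step in both is the identical computation $f^{-}(J)=f^{+}(R^{-1}(J))\le f^{+}(\{x_{0}\})=0$ for the arc $J$ skipped by the fan of a multiple point (using $R_{\#}f^{+}=f^{-}$, nonatomicity of $f^{+}$, and non-crossing of transport rays), followed by a count against the finite arc decomposition provided by (H1). Recasting this as ``jump points of the monotone map $R$ inject into the finitely many gaps of $\spt f^{-}$'' (and treating sink multiple points symmetrically, which the paper leaves implicit) is a tidier organization of the paper's claim that the fan endpoints must lie in closures of distinct arcs, not a genuinely different route.
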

\begin{proof}
In Lemma \ref{l-neg-N}, we already showed that $\mathcal N$ is at most countable. Let $x_0\in \mathcal N$ and $x_1,x_2\in R_{\cN}(x_0)$. We claim that $x_1$ and $x_2$ do not belong to the closure of the same arc from $\Gamma \cup \chi$. Indeed, if the claim is not true, then $\arc{x_1x_2}\subset \Gamma_i^-$ or $\arc{x_1x_2}\subset \chi_i^-$ and this arc must be contained in $R_\cN(x_0)$. As a result, 
$$
0< f^-(\arc{x_1x_2}) = R_\# f^+ (\arc{x_1x_2}) = f^+ (R^{-1}(\arc{x_1x_2})) = f^+(\{x_0\}) =0,
$$
which is a contradiction. 

Let us suppose now that $x_0'\neq x_0$ is another multiple point and $x_1'\neq x_2'$ are in $R_{\mathcal N}(x_0')$. Since transport rays cannot intersect we see that 
at least one of the $x_1',\,x_2'$ belongs to the closure of an arc not containing neither \,$x_1$\, nor \,$x_2$.
We can iterate this process to infer that the number of elements in $\mathcal N$ must be finite. $\qedhere$
 \end{proof}

Now, we can state another important tool. 
\begin{lemma}\label{lyy}
If $e^+\in \chi^+_i$, $i\in I_\chi$ 
(resp. $e^+\in \Gamma^+_i$, $i\in I_\Gamma$)
and $ \bbT^{[-1]}(R(e^+))\in \chi^+_i$ (resp.  $\bbT^{[-1]}(R(e^+))\in \Gamma^+_i$),
then \,$e^+ =  \bbT^{[-1]}(R(e^+)).$
\end{lemma}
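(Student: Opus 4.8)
The plan is to reduce the statement to an equality of $g$-values on the two monotone branches of $\chi_i$ (resp. $\Gamma_i$) and then invoke strict monotonicity. I spell out the case $e^+\in\chi_i^+$; the case $e^+\in\Gamma_i^+$ is identical, with the endpoint $a_i^+$ (resp. $a_i^-$) playing the role of the common vertex $c_i$. First I unwind the hypothesis: by Lemma \ref{cont-T} the map $\bbT$ restricts to a homeomorphism of $\chi_i^+$ onto $\chi_i^-$, so the assumption $\bbT^{[-1]}(R(e^+))\in\chi_i^+$ is equivalent to $R(e^+)\in\bbT(\chi_i^+)=\chi_i^-$. Hence both $e^+$ and its transport endpoint $R(e^+)$ lie on the two branches of the single arc $\chi_i=\chi_i^+\cup\{c_i\}\cup\chi_i^-$.

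The core of the argument is the vanishing of $f$ on the arc between $e^+$ and $R(e^+)$. Since $e^+\in\chi_i^+$, Lemma \ref{lzero} gives $f(\arc{\,e^+ R(e^+)})=0$, and this holds for either of the two arcs joining the points. Because $g\in C(\partial\Omega)$, the measure $f=\partial_\tau g$ carries no atoms, and its integral over an oriented arc telescopes to the net increment of $g$ between the endpoints; I would carry this out on the sub-arc of $\chi_i$ through $c_i$, where the increment is visibly $g(R(e^+))-g(e^+)$. Thus $f(\arc{\,e^+ R(e^+)})=0$ forces $g(e^+)=g(R(e^+))$.

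It then remains to match values. By the definition \eqref{df-T}, the point $\bbT(e^+)$ is the unique element of $\chi_i^-$ with $TV(g|_{\arc{\,c_i e^+}})=TV(g|_{\arc{\,c_i \bbT(e^+)}})$; since $g$ is strictly monotone on each branch and attains its common extreme value at $c_i$, this total-variation identity is precisely $g(\bbT(e^+))=g(e^+)$. As $R(e^+)\in\chi_i^-$ also satisfies $g(R(e^+))=g(e^+)$ and $g$ is injective on $\chi_i^-$, I conclude $R(e^+)=\bbT(e^+)$, and applying $\bbT^{[-1]}$ gives $e^+=\bbT^{[-1]}(R(e^+))$, as claimed.

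The one point that deserves care — and which I regard as the only real obstacle — is the telescoping identity over an arc on which $g$ is not monotone, since $g$ increases on $\chi_i^+$ and decreases on $\chi_i^-$ through the extremum at $c_i$. This is legitimate because the signed measure $\partial_\tau g$ integrates to $g(\mathrm{end})-g(\mathrm{start})$ irrespective of monotonicity, using only continuity of $g$; stating it explicitly avoids conflating the vanishing of $f(\arc{\,e^+ R(e^+)})$ with the vanishing of the total variation of $g$ on that arc, which is false.
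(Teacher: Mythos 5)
Your proof is correct, but it takes a genuinely different route from the paper's. The paper stays at the level of the measure $f$: it first establishes the auxiliary identity \eqref{r-pro}, namely $f(\arc{x^+\,\bbT(x^+)})=0$ for every $x^+$ in $\chi_i^+$ or $\Gamma_i^+$ --- immediate for $\chi$-arcs, but for $\Gamma$-arcs requiring a decomposition of $\partial\Omega\setminus(\arc{x^+b_i^+}\cup\arc{\bbT(x^+)\,b_i^-})$ into two components and an appeal to point (4) of (H1) (disjointness of the convex hulls $T_j$, $D_j$) to show that the component containing $b_i^\pm$ carries zero $f$-mass --- and then combines \eqref{r-pro}, Lemma \ref{lzero} and $f(\partial\Omega)=0$ to conclude $f^+(\arc{e^+\,\bbT^{[-1]}(R(e^+))})=0$, finishing by strict monotonicity. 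You instead push everything down to $g$-values via the fundamental-theorem identity $f(\arc{xy})=g(y)-g(x)$ for continuous $BV$ data, which turns Lemma \ref{lzero} into $g(e^+)=g(R(e^+))$ and turns the total-variation matching in \eqref{df-T} into $g(\bbT(e^+))=g(e^+)$ (using the common value at $c_i$, resp.\ the normalization $g(a_i^+)=g(a_i^-)$ from (H1)); injectivity of $g$ on $\chi_i^-$ (resp.\ $\Gamma_i^-$) then forces $R(e^+)=\bbT(e^+)$. Your route is shorter and makes the $\Gamma$-case truly parallel to the $\chi$-case, bypassing the (H1)(4)-based decomposition entirely; what the paper's route buys is \eqref{r-pro} as a standalone tool, which is reused later (in Lemma \ref{l4.5} and in Step 3 of Proposition \ref{Prop. convex case with finite arcs}), though your value-matching characterization of $\bbT$ recovers \eqref{r-pro} as an immediate corollary in any case. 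You were also right to single out the telescoping step as the delicate point: the identity holds for arcs on which $g$ is not monotone precisely because $\partial_\tau g$ of an arc is the net increment and not the total variation, and keeping those two quantities separate is what makes the shortcut legitimate.
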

\begin{proof}
By the definition of \,${\bf{T}}$, we note that if \,$\bbT^{[-1]}(R(e^+))\in \chi^+_i$ (resp.  $\bbT^{[-1]}(R(e^+))\in \Gamma^+_i$), then we have  $ R(e^+)\in \chi^-_i$ (resp.  $R(e^+)\in \Gamma^-_i$).
Thanks to Lemma \ref{lzero}, we know that $f(\arc{\,e^+R(e^+)})=0$. Moreover, for every $x^+ \in \chi_i^+$, $i \in I_\chi$ (resp. $x^+\in\Gamma_i^+$, $i \in I_\Gamma$), we show that
\begin{equation}\label{r-pro}
f(\arc{\,x^+\,\bbT}(x^+))
= 0.
\end{equation}
In fact, if  $x^+\in \chi^+_{i}$ then (\ref{r-pro}) immediately follows from the definition of $\bbT$ (one can assume that the arc is so chosen that $c_{i}\in\, \arc{\,x^+{\bf{T}}(x^+)}$). 

If $x^+ \in \Gamma^+_{i}$ then, again due to the definition of $\bbT$, we have
$f^+(\arc{x^+\,b^+_{i} })= f^-(\arc{{\bf{T}}(x^+)\,b^-_{i}})$
and the arcs are chosen so that  $\arc{x^+b^+_{i}}\subset \Gamma^+_{i}$ and ${\bf{T}}\!\arc{(x^+)\,b^-_{i}}=\bbT(\arc{x^+b^+_{i}}) \subset \Gamma_i^-$. We also see that $\partial\Omega \setminus(\arc{x^+b^+_{i}}\cup \arc{{\bf{T}}(x^+)\,b^-_{i}})$ has exactly two connected components $E_1$ and $E_2.$ For the sake of definitness, we assume that $b^+_{i},\,b^-_{i}\in E_1.$ Notice that if there is $j\in I_\Gamma$ ($i\neq j$) and such that $E_1\cap \Gamma^\pm_j \neq \emptyset$, then $\Gamma^\pm_j\subset E_1$. Indeed, if $\Gamma^\pm_j\setminus E_1\neq \emptyset$, then $\Gamma^\pm_j$ must intersect $\Gamma^+_i\cup \Gamma^-_i$, but this is impossible. The same conclusion, i.e. $\chi_j^\pm\subset E_1$ holds when  $E_1\cap \chi^\pm_j \neq \emptyset$ ($j\in I_\chi$). 

Let us suppose now that $\Gamma^+_j\subset E_1$, $j \in I_\Gamma$.
Then, we  also have $\Gamma^-_j\subset E_1$.
Indeed, if  $\Gamma^-_j\subset  E_2$ then the geometry would imply that $T_i \cap T_{j}\neq\emptyset$ which contradicts (H1). {By} the same argument, if $\chi^+_j\subset E_1$ ($j \in I_\chi$), then {$\chi^-_j\subset E_1$.} 
The observations we made imply that 
$$
E_1= \bigcup_{\alpha\in A} \chi_\alpha \cup \bigcup_{\beta\in B}\Gamma_\beta  \cup \bigcup_{\gamma\in C} F_\gamma \cup N,
$$
for appropriate sets of indices $A,\,B,\,C$, where $|f|(N) =0$. Hence, we have $f(E_1) =0.$
As a result, from the definition of ${\bf T}$ we get that
\begin{equation*}
f(\arc{x^+{\bf{T}}(x^+)}) = f(\arc{x^+ b^+_{i}}) + f(E_1) + f(\arc{{\bf{T}}(x^+)\,b^-_{i}}) = f^+(\arc{x^+ b^+_{i}})- f^-(\arc{{\bf{T}}(x^+)\,b^-_{i}})
= 0.
\end{equation*}
\\
This observation completes the proof of 
\eqref{r-pro}.

Finally,
take $x^+=\bbT^{[-1]}(R(e^+))$, then we get that $f(\arc{\bbT^{[-1]}(R(e^+))R(e^+)})=0$. After combining this with
$f(\arc{e^+R(e^+)})=0$, we have
\begin{align*}
0=  f(\partial\Omega) &= f(\arc{e^+\,\bbT^{[-1]}(R(e^+))})+ f(\arc{e^+\,R(e^+)}) + f(\arc{\bbT^{[-1]}(R(e^+))R(e^+)})\\
&=  f(\arc{e^+\,\bbT^{[-1]}(R(e^+))}).
\end{align*}
Since 
$
f(\arc{e^+\,\bbT^{[-1]}(R(e^+))})  = f^+(\arc{e^+\,\bbT^{[-1]}(R(e^+))}), 
$
we reach $e^+ = \bbT^{[-1]}(R(e^+)).$ 
\end{proof}

Now, we are ready to carry out the {\it proof of Proposition \ref{strictly_convex_case}.} We are going to show that for $f^+-$a.e. $x\in \chi^+_i$, $i\in I_\chi$ (resp.  $x\in\Gamma_i^+$, $i\in I_\Gamma$), we have $R(x)=\bbT(x)$.
Assume that our claim does  not hold. Hence, due to {piecewise continuity of \,$R$\, implied by} Lemma \ref{l-cR}, there is an arc $E_1^+ \subset \chi_{i_1}^+$ 
(resp. $E_1^+\subset \Gamma_{i_1}^+$) 
that is transported outside $\chi_{i_1}^-$ (resp. $\Gamma_{i_1}^-$) to an arc $E_2^-:=R(E_1^+)$ such that $E_2^- \subset \chi_{i_2}^-$ or $E_2^- \subset\Gamma_{i_2}^-$.
Without loss of generality, we may assume that $E_1^+ \subset \chi_{i_1}^+$ and $E_2^- \subset \chi_{i_2}^-$. The argument when $E_2^-\subset\Gamma^-_{i_2}$ is the same and it will be omitted. 
Set $E_2^+:={\bf{T}}^{[-1]}(E_2^-)\subset \chi^+_{i_2}$.  
{Due to Lemma \ref{lem:Finite double point}, we may make the arcs $E^+_2$ and $E^-_2$ disjoint from $\cN$.} 
We claim that $R(E_2^+) \cap  \chi_{i_2}^-=\emptyset$. 
Let us suppose the contrary, i.e. 
there is an arc $E^+$ of $E_2^+$ such that 
$R(E^+) \subset \chi_{i_2}^-$. 
Then, Lemma \ref{lyy}
implies that  $R(E^+)=\bbT(E^+) \subset E_2^-$.
However, this is impossible due to the strict convexity of \,$\Omega$\, and the fact that two different transport rays cannot intersect at an interior point and
that $E_2^-=R(E_1^+)$. 
Our claim follows. 



We presented above a general procedure: given an arc $E^+_1$ such that $E^+_1\cap\cN= \emptyset$ we constructed two arcs $E^-_2$ and $E^+_2$ by setting $R(E_1^+)=E_2^-$ and $\bbT(E_2^+)=E_2^-$. 
Let us
suppose that arcs $E_1^+$, $E_2^-$, $E_2^+, \dots,\,E_n^+$ (where $n\ge 1$) have been already constructed by the above algorithm, where $R(E_n^+) \cap E_n^-=\emptyset$.
Then, we define $E^-_{n+1}$ as follows: if we have
$R(E^+_n) \subset  \chi^-_{i_1}$, 
then we set
$E^-_{n+1}:= R(E^+_n)$ and 
the construction terminates. If  $R(E^+_n) \cap  \chi^-_{i_1}=\emptyset$, 
then 
we set 
$$
E^-_{n+1} := R(E^+_n)\qquad\mbox{and}\qquad E^+_{n+1} := \bbT^{[-1]}(E^-_{n+1})
$$
and due to Lemma  \ref{lem:Finite double point} we may guarantee, after possible restriction of the initial set $E^+_1$, that $E^-_{n+1} \cap \cN= \emptyset$. Hence, we know that $R(E^+_{n+1}) \,\cap\,  E_{n+1}^-=\emptyset$.
We note that this sequence of arcs $\{E_k^\pm\}$ may be either: (1) finite with elements
\begin{equation}\label{r-fin}
E^+_{1},\,E^-_{2},\,E^+_{2}, \ldots,\, E^-_{m},\,E^+_{m},\,E_{m+1}^-.
\end{equation}
(2) or infinite
\begin{equation}\label{r-linf}
E^+_{1},\,E^-_{2},\,E^+_{2}, \ldots,\,E^+_{n},\,E^-_{n}, \ldots
\end{equation}
In addition, thanks to Lemma \ref{lem:Finite double point} 
we choose these arcs $E_k^\pm$ such that they do not contain any multiple point. 

Assume that the case \eqref{r-fin} holds. Let $e_{m+1}^-$ be any point in $E_{m+1}^-$ and set $e_m^+=R^{-1}(e_{m+1}^-) \in E_m^+$. Then, we define $e_m^-=\bbT(e_m^+)$ and $e_{m-1}^+=R^{-1}(e_{m}^-) \in E_{m-1}^+$. In this way, we get a sequence of points $\{e_k^\pm:1 \leq k \leq m+1 \}$ such that $e_k^\pm \in E_k^\pm$, 
\begin{equation}\label{df-epm}
R(e_k^+)=e_{k+1}^-\quad\hbox{and} \quad\bbT(e_k^+)=e_k^-.    
\end{equation} Therefore, Lemma \ref{l4.5} below tells us that
not only $e^-_{m+1}\in\chi^-_{i_1}$ 
but also $e^-_{m+1} = e^-_1$. The final observation is made in Lemma \ref{l4.6}, where we claim that
in fact the sequence (\ref{r-fin}) consists of two elements 
$E^+_1$, $E^-_{m+1}=E_1^-.$ 

At last, we address the case of seemingly infinite sequence (\ref{r-linf}). We notice that since the number of arcs $\Gamma_i$ and $\chi_i$ is finite, then some of these arcs must be visited by the sequence more than once. In other words, a loop forms.
{As a result we are back to \eqref{r-fin}, hence the seemingly infinite sequence consists only of $E^+_1$ and $E^-_1$.} 
This is the content of
Lemma \ref{l4.7} below. Consequently, our proposition follows. $\qed$\\
Here are the Lemmas we referred to.
\begin{lemma}\label{l4.5} 
Assume that \eqref{r-fin} holds. Let us consider the finite sequence of points \,$e^+_{1},\,e^-_{2},\,e^+_{2}$,\,..., $e^-_{m},\,e^+_{m}, \,e_{m+1}^-$ described above. Then, we have
$$
e^-_{m+1} = e^-_1:= \bbT(e^+_1).
$$
\end{lemma}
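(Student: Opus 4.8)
The plan is to reduce the whole statement to a single equality of $g$-values along the chain and then invoke the strict monotonicity of $g$ on $\chi^-_{i_1}$. The elementary fact I would use throughout is that, since $g\in C(\partial\Omega)\cap BV(\partial\Omega)$ and $f=\partial_\tau g$, one has $f(\arc{pq})=g(q)-g(p)$ for the arc oriented from $p$ to $q$; hence an arc carries zero $f$-mass exactly when $g$ takes the same value at its two endpoints. In particular both the statement of Lemma \ref{lzero} and \eqref{r-pro} become value equalities for $g$.

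First I would record two families of identities along the sequence $e^+_1,e^-_2,e^+_2,\dots,e^+_m,e^-_{m+1}$. Since $e^-_k=\bbT(e^+_k)$ and $\bbT$ is built by matching equal total variation from the base points $c_i$ (resp. $a^\pm_i$, where $g(a^+_i)=g(a^-_i)$), the monotonicity prescribed by (H1) gives $g(\bbT(x^+))=g(x^+)$; equivalently this is \eqref{r-pro}, namely $f(\arc{e^+_k e^-_k})=0$, so that $g(e^-_k)=g(e^+_k)$ for every $k$. On the other hand $e^-_{k+1}=R(e^+_k)$, so Lemma \ref{lzero} yields $f(\arc{e^+_k e^-_{k+1}})=0$, i.e.\ $g(e^-_{k+1})=g(e^+_k)$. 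Combining the two identities gives $g(e^-_k)=g(e^-_{k+1})$ for $k=1,\dots,m$, and telescoping produces $g(e^-_1)=g(e^-_{m+1})$.

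It then remains to locate both points in the same monotone arc. We have $e^-_1=\bbT(e^+_1)\in\chi^-_{i_1}$ because $e^+_1\in E^+_1\subset\chi^+_{i_1}$ and, by Lemma \ref{cont-T}, $\bbT$ maps $\chi^+_{i_1}$ onto $\chi^-_{i_1}$; and $e^-_{m+1}=R(e^+_m)\in E^-_{m+1}\subset\chi^-_{i_1}$ by the termination rule that defines the finite case \eqref{r-fin}. Since $g$ is strictly monotone, hence injective, on $\chi^-_{i_1}$, the equality $g(e^-_1)=g(e^-_{m+1})$ forces $e^-_1=e^-_{m+1}$, which is exactly the assertion $e^-_{m+1}=\bbT(e^+_1)$.

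The only genuinely delicate points are bookkeeping: applying $f(\arc{pq})=g(q)-g(p)$ with consistent orientations, so that the two zero-mass facts really collapse into the single value equality $g(e^-_k)=g(e^-_{k+1})$ without sign errors, and checking that \eqref{r-pro} and Lemma \ref{lzero} are available for each $e^+_k$, i.e.\ that every $e^+_k$ belongs to some $\chi^+_i\cup\Gamma^+_i$ — which holds since $E^+_k=\bbT^{[-1]}(E^-_k)$ by construction. I would stress that, unlike the subsequent lemmas, this step uses neither (H2) nor the cyclical monotonicity condition (H3): it is a pure consequence of the mass-balance identities Lemma \ref{lzero} and \eqref{r-pro} together with the strict monotonicity of $g$ on the terminal arc.
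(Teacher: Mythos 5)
Your proof is correct, and it takes a cleaner route than the paper's. Both arguments rest on exactly the same two ingredients — $f(\arc{e^+_k e^-_k})=0$, i.e.\ \eqref{r-pro}, and $f(\arc{e^+_k e^-_{k+1}})=0$ from Lemma \ref{lzero} applied to $e^-_{k+1}=R(e^+_k)$ — and both finish identically, locating $e^-_1$ and $e^-_{m+1}$ in $\chi^-_{i_1}$ and invoking the strict monotonicity of $g$ there. The difference lies in how the zero-mass facts are chained together. The paper proves by induction on $k$ that $f(\arc{e^+_1 e^-_k})=0$, and its inductive step requires a three-case analysis (configurations (i)--(iii)) of how the arcs $\arc{e^+_1 e^-_k}$, $\arc{e^+_k e^-_k}$, $\arc{e^+_k e^-_{k+1}}$ overlap on $\partial\Omega$, handled through additivity of the measure $f$. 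You bypass that case analysis entirely with the identity $f(\arc{pq})=g(q)-g(p)$, which is legitimate here: $g$ is continuous and in $BV(\partial\Omega)$, so $f$ is atomless and the fundamental theorem of calculus holds along the arclength parametrization (the paper itself uses this identity implicitly, e.g.\ when it defines $g_n(x)=f_n(\arc{x_n x})$ and asserts $\partial_\tau g_n=f_n$). After this conversion, every zero-mass statement is an equality of boundary values and the chaining is a telescoping of numbers, $g(e^-_k)=g(e^+_k)=g(e^-_{k+1})$; note that your chain is equivalent to the paper's induction statement, since $f(\arc{e^+_1 e^-_k})=0$ holds iff $g(e^+_1)=g(e^-_k)$. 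What the paper's route buys is that it stays entirely at the level of the measure $f$; what yours buys is brevity and a transparent mechanism — all points of the chain carry one and the same value of $g$. Your side remarks are also accurate: the orientation bookkeeping is harmless, since either of the two arcs joining $p$ and $q$ has mass $\pm(g(q)-g(p))$, so ``zero mass'' is orientation-free; each $e^+_k$ does lie in $\chi^+_{i_k}\cup\Gamma^+_{i_k}$, so both lemmas apply; and neither (H2) nor (H3) is needed at this step — (H3) enters only in Lemma \ref{l4.6}.
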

\begin{proof}
First, we claim that $f(\arc{e^+_1e^-_k})=0$ for all $k\ge 2$ (we note that this property holds even if the sequence \eqref{r-linf} is infinite). We will prove this by induction. Thanks to Lemma \ref{lzero}, we see that  $f(\arc{e^+_1e^-_2})=0$, because by definition $e_2^-=R(e_1^+)$.
We also recall from \eqref{r-pro}  that $f(\arc{e^+_k e^-_k}) =0$, for all $2\le k\le m$.
Recursively, assume that $f(\arc{e_1^+ e_k^-})=0$, for some $k$. We shall prove that we also have $f(\arc{e_1^+ e_{k+1}^-})=0$. For this aim, we have to consider the following three possible configurations, while taking the arc $\arc{e_1^+ e^-_{k+1}}$ containing $e^-_k$ (the other cases when $e_k^- \notin \arc{e_1^+ e^-_{k+1}}$ can be treated similarly):\\
(i) $e^+_k \in \ \arc{e_k^- e_{k+1}^-}\,\subset\ \arc{e_1^+ e_{k+1}^-}$,\,\,\,\,\qquad(ii) $e^+_k \in\ \arc{e_1^+ e_{k}^-}\,\subset\ \arc{e_1^+ e_{k+1}^-}$,\,\,\,\qquad\,\,(iii) $e^+_k \in \partial\Omega\setminus \arc{e_1^+ e_{k+1}^-}$.\\
Let us start by considering case (i). Then, we see that the arcs $\arc{e^+_1 e^-_k}$, $\arc{e^+_k e^-_k},$ $\arc{e^+_k e^-_{k+1}}$ are disjoint. Hence,
$$
f ( \arc{e^+_1 e^-_{k+1}}) =
f( \arc{e^+_1 e^-_k} \cup \arc{e^+_k e^-_k} \cup \arc{e^+_k e^-_{k+1}})= f( \arc{e^+_1 e^-_k})+f(\arc{e^+_k e^-_k})+f( \arc{e^+_k e^-_{k+1}})=0.
$$
Indeed, here we used the inductive assumption $f( \arc{e^+_1 e^-_k})=0$, property (\ref{r-pro}) as well as Lemma \ref{lzero}, $f( \arc{e^+_k e^-_{k+1}}) = f( \arc{e^+_k R(e^+_k}))=0$. Now, we  take care of case (ii). We have
$$
f ( \arc{e^+_1 e^-_{k+1}}) = f( \arc{e^+_1 e^-_{k}} \cup \arc{e^+_k e^-_{k+1}}).
$$
However, the arcs $\arc{e^+_k e^-_{k+1}}$ and $\arc{e^+_1 e^-_{k}}$ have a nontrivial intersection which is the arc $\arc{e_k^+e_k^-}$. Thus, one has
$$
f ( \arc{e^+_1 e^-_{k+1}}) = f( \arc{e^+_1 e^-_{k}}) - f( \arc{e^+_k e^-_{k}})+ f( \arc{e^+_k e^-_{k+1}}) = 0,
$$\\
because each term in the sum above is zero. Finally, we consider the case (iii). We apply the same argument we used in (ii) and obtain
\begin{align*}
f ( \arc{e^+_1 e^-_{k+1}}) &=  f( \arc{e^+_1 e^-_{k}})+ 
 f( \arc{e^+_k e^-_{k}})
 -
 f( \arc{e^+_k e^-_{k+1}})=0.
\end{align*}
In particular, induction yields $f( \arc{e^+_1 e^-_{m+1}})=0$. We  also know that $e^-_{m+1}\in \chi^-_{i_1}$ 
and $f(\arc{e_1^+e_1^-})=0$. Hence, we infer that $f^-( \arc{e^-_1 e^-_{m+1}})=0$ and so, $e_{m+1}^-=e_1^-$. $\qedhere$
\end{proof}
In addition, we have the following:
\begin{lemma}\label{l4.6}
The sequence $\{e_k^\pm:1 \leq k \leq m+1 \}$ consists of the couple $\{e^+_1,\, e^-_1\}$. 
\end{lemma}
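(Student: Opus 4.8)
The plan is to pit the cyclical monotonicity of the genuine optimal plan $\gamma$ against the \emph{strict} cyclical monotonicity encoded in Condition (H3) for the map $\bbT$. I would argue by contradiction, assuming $m \ge 2$, and show that the two are incompatible, which forces the chain to collapse to $m = 1$, i.e. to the single couple $\{e_1^+, e_1^-\}$.

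First I would collect the defining relations of the points produced before the lemma: by \eqref{df-epm} one has $R(e_k^+) = e_{k+1}^-$ and $\bbT(e_k^+) = e_k^-$ for $1 \le k \le m$, while Lemma \ref{l4.5} supplies the closing relation $e_{m+1}^- = e_1^-$. In particular the targets satisfy the set equality $\{e_k^- : 1 \le k \le m\} = \{e_{k+1}^- : 1 \le k \le m\}$, so the assignment $e_k^+ \mapsto e_k^-$ is a cyclic rearrangement of the assignment $e_k^+ \mapsto e_{k+1}^-$ realised by $R$.

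Next, since $\Omega$ is strictly convex, Lemma \ref{l-2.7} guarantees that $\gamma$ is concentrated on the graph of $R$, hence each pair $(e_k^+, e_{k+1}^-) = (e_k^+, R(e_k^+))$ lies in $\spt(\gamma)$, both endpoints being on a common transport ray. The support of an optimal plan for the cost $|x-y|$ is cyclically monotone, so comparing the $\gamma$-matching with the shifted matching above yields
\begin{equation*}
\sum_{k=1}^m |e_k^+ - e_{k+1}^-| \le \sum_{k=1}^m |e_k^+ - e_k^-|.
\end{equation*}
On the other hand, I would feed the points $\{e_i^+\}_{1\le i \le m}$ into Condition (H3), \eqref{rH3}. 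Using $\bbT(e_i^+)=e_i^-$ together with $\bbT(e_1^+) = e_1^- = e_{m+1}^-$, the right-hand side of \eqref{rH3} is exactly $\sum_{k=1}^m |e_k^+ - e_{k+1}^-|$, so (H3) reads
\begin{equation*}
\sum_{k=1}^m |e_k^+ - e_k^-| < \sum_{k=1}^m |e_k^+ - e_{k+1}^-|.
\end{equation*}
The two displayed inequalities are contradictory, so the standing assumption $m \ge 2$ is untenable; therefore $m=1$ and the sequence reduces to $\{e_1^+, e_1^-\}$.

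The step I expect to require the most care is the legitimacy of applying Condition (H3): this demands that $e_1^+, \dots, e_m^+$ be genuinely distinct, so that \eqref{rH3} delivers a strict inequality rather than a vacuous one. I would verify distinctness from the construction, since the arcs $E_k^+$ are pairwise disjoint and $R$ is injective on them; a coincidence $e_j^+ = e_k^+$ would force $e_{j+1}^- = e_{k+1}^-$ and close the loop prematurely, letting me pass to a minimal simple cycle for which the argument applies verbatim. A secondary point to nail down is the identification of the rearrangement permutation, i.e. the set equality of the targets used above, which relies crucially on the closing identity $e_{m+1}^- = e_1^-$ from Lemma \ref{l4.5}.
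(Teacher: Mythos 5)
Your proof is correct and follows essentially the same route as the paper: compare the diagonal matching $e_k^+\mapsto e_k^-=\bbT(e_k^+)$ with the shifted matching $e_k^+\mapsto e_{k+1}^-=R(e_k^+)$ (closed up by Lemma \ref{l4.5}), and play the resulting inequality against the strict inequality in (H3). The only difference is cosmetic: you invoke cyclical monotonicity of $\spt(\gamma)$ as a black box, while the paper derives the identical inequality by telescoping the Kantorovich potential $\phi$ along the transport rays via \eqref{transport ray} and its $1$-Lipschitz bound; your added remark on distinctness of the $e_k^+$ (reducing to a minimal simple cycle) is a sensible precaution that the paper leaves implicit.
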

\begin{proof} Let us suppose the contrary and $m>1$. Thanks to
Lemma \ref{l4.5}, 
we have that $(e_m^+,e_1^-) \in \spt(\gamma)$. Yet, $(e_k^+,e_{k+1}^-) \in \spt(\gamma)$, for all $1 \leq k \leq m-1$. Let $\phi$ be a Kantorovich potential for measures $f^+$ and $f^-$. Hence, by the equality \eqref{transport ray}, we must have the following inequality:
\begin{align*}
\sum_{k=1}^{m-1} |e_k^+-e_{k+1}^-| + |e_m^+ -e_1^-| &= \sum_{k=1}^{m-1} [\phi(e_k^+)-\phi(e_{k+1}^-)] + [\phi(e_m^+) -\phi(e_1^-)] = \sum_{k=1}^m [\phi(e_k^+) - \phi(e_k^-)]\\
&\le \sum_{k=1}^m |e_k^+ - e_k^-|,
\end{align*}
where the last inequality follows from the fact that $\phi$ is $1-$Lipschitz. Considering the sequence's definition in (\ref{df-epm}), it follows that the above inequality contradicts our condition (H3). 
\end{proof}
Finally, it remains to show that the sequence (\ref{r-linf}) cannot be infinite.
\begin{lemma}\label{l4.7}
The apparently infinite sequence (\ref{r-linf}) is in fact finite and it consists of the arcs $E^+_1,\,E^-_1$. 
\end{lemma}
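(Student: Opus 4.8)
The plan is to argue by contradiction, assuming that the sequence genuinely continues in the form \eqref{r-linf}. The decisive structural fact is that every plus-arc $E_k^+$ produced by the algorithm is contained in one of the pieces $\chi_i^+$ ($i\in I_\chi$) or $\Gamma_i^+$ ($i\in I_\Gamma$), and by hypothesis these index sets are finite. Hence it suffices to examine the first $|I_\chi|+|I_\Gamma|+1$ plus-arcs: by the pigeonhole principle two indices $p<q$ occur with $E_p^+$ and $E_q^+$ lying in one and the same piece, say $\chi_j^+$. Since only finitely many terms are inspected, the repeated refinements of $E_1^+$ needed to keep the arcs off the multiple-point set $\cN$ (finite by Lemma \ref{lem:Finite double point}) are carried out only finitely many times, so there is no difficulty in having all the relevant arcs disjoint from $\cN$.

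Next I would check that the finite string $E_p^+,\,E_{p+1}^-,\,E_{p+1}^+,\dots,\,E_q^-$ is itself an admissible terminating sequence of the type \eqref{r-fin}, now based at the piece $\chi_j$. The relations $R(E_k^+)=E_{k+1}^-$ and $\bbT(E_k^+)=E_k^-$ required in \eqref{df-epm} are inherited verbatim from the main construction; the only genuinely new point is the termination condition. This follows because $E_q^+=\bbT^{[-1]}(E_q^-)$ lies in $\chi_j^+$ and $\bbT$ maps $\chi_j^+$ homeomorphically onto $\chi_j^-$ (Lemma \ref{cont-T}), so that $E_q^-=\bbT(E_q^+)\subset\chi_j^-$, which is exactly the requirement that the string close up in the minus-part of its starting piece.

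I would then feed this sub-sequence into Lemmas \ref{l4.5} and \ref{l4.6}. Fixing a point $e_q^-\in E_q^-$ and pulling it back along $R$ and $\bbT$ exactly as in the finite case produces points $e_k^\pm$ ($p\le k\le q$); Lemma \ref{l4.5} gives $e_q^-=\bbT(e_p^+)=e_p^-$, while Lemma \ref{l4.6}, which is where condition (H3) enters, forces the string to degenerate to the single couple $\{e_p^+,e_p^-\}$, so that $E_{p+1}^-=E_p^-$, i.e. $R=\bbT$ on $E_p^+$. This is the desired contradiction: by the very construction of the sequence one has $R(E_p^+)\cap E_p^-=\emptyset$, whereas $E_{p+1}^-=R(E_p^+)$ combined with $E_{p+1}^-=E_p^-$ forces these two arcs of positive $f$-measure to coincide. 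Consequently the sequence cannot continue indefinitely: it must terminate, hence be of type \eqref{r-fin}, and the finite-case analysis already established in Lemmas \ref{l4.5} and \ref{l4.6} collapses it to the couple $E_1^+,\,E_1^-$.

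The main obstacle here is bookkeeping rather than conceptual: one must verify carefully that the sub-string extracted by pigeonhole is literally of the form \eqref{r-fin} (with the correct starting piece, the correct termination, and all arcs disjoint from $\cN$) so that Lemmas \ref{l4.5} and \ref{l4.6} apply without modification. Two auxiliary subtleties deserve attention. First, one should confirm that the loop is always detected within the first $|I_\chi|+|I_\Gamma|+1$ terms, which is what makes the finitely-many refinements legitimate. Second, in the borderline case $q=p+1$, where $E_p^+$ and $E_{p+1}^+$ sit in the same piece, it is cleaner to invoke Lemma \ref{lyy} directly to conclude $R(E_p^+)=\bbT(E_p^+)$, again contradicting $R(E_p^+)\cap E_p^-=\emptyset$; this shows the contradiction is robust whether the loop has length one or more.
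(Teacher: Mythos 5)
Your proof is correct and follows essentially the same route as the paper: pigeonhole on the finitely many arcs to detect a repeated piece, apply Lemma \ref{l4.5} to close the loop at the level of points, invoke Lemma \ref{l4.6} (where (H3) enters) to collapse it to a single couple, and contradict the construction invariant $R(E_k^+)\cap E_k^-=\emptyset$. The only cosmetic differences are that the paper pigeonholes on the minus-arcs $E_l^-$ rather than the plus-arcs (equivalent via $\bbT$), and it does not spell out the gap-one case, which you correctly dispatch via Lemma \ref{lyy}.
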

\begin{proof}
Let us consider the infinite sequence (\ref{r-linf}). Since the number of arcs \,$\Gamma_i$\, and \,$\chi_i$\, is by assumption finite, then there exist $l\geq 1$ and $k\geq 2$ such that $E^-_l,\,E^-_{l+k}\subset \chi^-_{i_l}$ (or  $E^-_l,\,E^-_{l+k}\subset \Gamma^-_{i_l}$).
Let us denote again by $\{e_i^\pm\}_i$ the sequence of points on these arcs $E_i^\pm$ defined above in the course of proof of Proposition \ref{strictly_convex_case}.
Thanks to Lemma \ref{l4.5},  
one can show similarly that
$e^-_{l+k}=e^-_l$. 
Now, we have a finite sequence starting at $e^+_{l}\in \chi^+_{i_{l}}$ (or \,$e^+_{l}\in \Gamma^+_{i_{l}}$) (this point will take the role of a new $e^+_1$) and terminating at $e^-_{l+k}=e_l^-\in \chi^-_{i_l}$ (or \,$e^-_{l+k}=e_l^-\in \Gamma^-_{i_l}$). Hence, we are in a situation, when we deal with a finite sequence, we invoke Lemma \ref{l4.6} to deduce that the loop consists of  a pair $\{e^+_{l},\,e^-_l\}$. But, this is a contradiction. $\qedhere$
\end{proof}
In this way, we have provided all the details for the proof of Proposition \ref{strictly_convex_case}, which will play 
an important role in the next section to show that even if $\Omega$ is convex and not necessarily strictly convex, the transportation map ${\bf{T}}$ will be also the optimal transport map $R$ between $f^+$ and $f^-$. We conclude this section with the following observation.
\begin{remark} \label{Remark 3.7}
The scrutiny of the proof of Lemma \ref{l4.6} shows  that in the case when $\Omega$ is strictly convex, one can relax the condition (H3) by assuming that inequality (\ref{rH3}) holds only for every sequence of points $\{e_i^+\}_{1 \leq i \leq m}$ 
that belong to different arcs $\Gamma_i^+$ ($i \in I_\Gamma$) or $\chi_i^+$ ($i\in I_\chi$). This remark will be used in Step 3 of Proposition \ref{Prop. convex case with finite arcs}. \\
\end{remark}

\subsection{The case of convex, but not strictly convex domain} In this section, we will extend the result of Proposition \ref{strictly_convex_case} to
the  case of 
convex domain $\Omega$, without assuming its strict convexity. We will proceed in a natural manner by finding a sequence of strictly convex domains $\Omega_n$ whose closures converge to $\overline{\Omega}$ in the Hausdorff metric. At the same time, we have to come up with a good choice of the boundary data $g_n$ defined on $\partial\Omega_n$ and approximating $g$, to keep the assumptions (H1) and (H3) in force. 
Since our approximation is based on the orthogonal  projection onto $\overline{\Omega}$, then the singular points of the support of $f$  will play a role. We recall their definition
$$
\mathcal{S}:=\bigg\{x\in \Gamma \cup \chi\,:\, \mbox{there is no tangent line to }\partial\Omega \,\,\mbox{at}\,\,x\bigg\}.
$$

We give our first observation about the projection map onto a convex set.

\begin{lemma} \label{Lemma 4.7}
Suppose that \,$\Omega\subset \bR^2$ is an open bounded convex  set and \,$\overline{\Omega}\subset \Omega^\prime$, where $\Omega^\prime$ is open bounded and strictly convex. We assume that $\tilde P:\mathbb{R}^2\mapsto \overline{\Omega}$ is the orthogonal projection, its restriction to $\partial \Omega^\prime$ will be denoted by $P$. 
For every \,$x_0 \in \partial\Omega$, we have the following:\\
(1) if \,$x_0\notin \mathcal{S}$, then 
the preimage $P^{-1}(x_0)$ is a singleton,\\
(2) if 
\,$x_0\in \mathcal{S}$, then $P^{-1}(x_0)\subset \partial\Omega^\prime$ is an arc of positive Hausdorff measure.
\end{lemma}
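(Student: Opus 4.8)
The plan is to identify the preimage $\tilde P^{-1}(x_0)$ with a translate of the normal cone of $\overline\Omega$ at $x_0$, and then to intersect this cone with $\partial\Omega'$. First I would recall the variational characterization of the orthogonal projection onto a closed convex set: for $z\in\mathbb R^2$ one has $\tilde P(z)=x_0$ if and only if $\langle z-x_0,\,y-x_0\rangle\le 0$ for every $y\in\overline\Omega$, that is, if and only if $z-x_0$ lies in the normal cone
\[
N(x_0):=\{\nu\in\mathbb R^2:\ \langle\nu,\,y-x_0\rangle\le0\ \text{for all }y\in\overline\Omega\}.
\]
Hence $\tilde P^{-1}(x_0)=x_0+N(x_0)$, and since $P$ is the restriction of $\tilde P$ to $\partial\Omega'$, the object to understand is $P^{-1}(x_0)=(x_0+N(x_0))\cap\partial\Omega'$. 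The geometry of $N(x_0)$ is dictated by the supporting lines of $\overline\Omega$ at $x_0$: a nonzero $\nu$ belongs to $N(x_0)$ exactly when the line through $x_0$ orthogonal to $\nu$ supports $\overline\Omega$.

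For part (1), if $x_0\notin\mathcal S$ then by definition $\partial\Omega$ has a tangent line at $x_0$, so the supporting line is unique and $N(x_0)$ reduces to the single outward normal ray $\{t\,\nn:\ t\ge0\}$. Thus $x_0+N(x_0)$ is a ray issuing from $x_0$. Since $\overline\Omega\subset\Omega'$ with $\Omega'$ open, the vertex $x_0$ lies in the interior of the bounded convex set $\Omega'$, and a ray emanating from an interior point of a bounded convex set meets its boundary in exactly one point; therefore $P^{-1}(x_0)$ is a singleton.

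For part (2), if $x_0\in\mathcal S$ then $\partial\Omega$ admits no tangent line at $x_0$, i.e.\ there are at least two distinct supporting lines at $x_0$, whose outward unit normals $\nn_1\neq\nn_2$ bound $N(x_0)$, realising it as the closed planar sector spanning the positive angle between $\nn_1$ and $\nn_2$; in particular $N(x_0)$ has nonempty interior. I would then parametrize the unit directions of $N(x_0)$ by an angle $\theta$ over a nondegenerate interval $[\theta_1,\theta_2]$ and send each direction to the unique point $p(\theta)\in\partial\Omega'$ where the corresponding ray from $x_0$ exits $\Omega'$ (again because $x_0$ is interior to $\Omega'$). Distinct directions from the same interior point hit $\partial\Omega'$ at distinct points, so $\theta\mapsto p(\theta)$ is continuous and injective, whence its image $P^{-1}(x_0)$ is an arc with distinct endpoints $p(\theta_1)\neq p(\theta_2)$. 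Being a nondegenerate sub-arc of the rectifiable convex curve $\partial\Omega'$, it has positive $\cH^1$ measure.

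The step I expect to be the main obstacle is the careful conversion, in part (2), of the defining property of $\mathcal S$ (absence of a tangent line, equivalently non-uniqueness of the supporting line) into the existence of two distinct extreme normals $\nn_1\neq\nn_2$ and hence a normal cone with nonempty interior, together with the verification that the direction-to-exit-point map is a homeomorphism onto a genuine arc, so that positivity of the $\cH^1$ measure is guaranteed. By contrast, the identification of $\tilde P^{-1}(x_0)$ with $x_0+N(x_0)$ and the singleton claim in part (1) are routine facts of planar convex geometry.
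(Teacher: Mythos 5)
Your proof is correct and takes essentially the same approach as the paper: the paper's own (very terse) proof likewise identifies $P^{-1}(x_0)$ with the intersection of the normal cone $N(x_0)$ with $\partial\Omega^\prime$, which is a single ray when a tangent line exists and a sector of positive opening at a singular point. Your write-up merely supplies the details the paper leaves implicit, namely the variational characterization $\tilde P^{-1}(x_0)=x_0+N(x_0)$ and the injective, continuous direction-to-exit-point parametrization giving a nondegenerate arc.
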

\begin{proof}
(1) In this case, the normal cone $N(x_0)$ reduces to a ray, which has a unique intersection with  $\partial\Omega^\prime$.

(2) If $x_0$ is a singular point, then the normal cone has a positive opening. Its intersection with   $\partial\Omega^\prime$ has a positive Hausdorff measure. $\qedhere$
\end{proof}

The conclusion from this lemma is as follows. If we approximate $\Omega$ by a sequence of strictly convex sets $\Omega_n$ and we try to partition the boundaries of these sets into arcs of types $\chi_{i,n}^\pm$ and $\Gamma_{i,n}^\pm$, then we will meet some difficulty since Lemma \ref{Lemma 4.7} tells us that $(P_{|\partial\Omega_n})^{-1}$ will not preserve the structure of arcs $\chi_i^\pm$ and $\Gamma_i^\pm$ due to possible presence of singular points on these arcs. As a result, we are forced to rearrange our partition into arcs $\chi^\pm_i$ and $\Gamma^\pm_i$ in such a way that the new partition avoids singular points. We will then make the following assumption about the set of singular points
$${|f|(\overline{\mathcal{S}}) =0.} \leqno(\rm{S})
$$

We note that the assumption (S) is satisfied as soon as the set \,$\mathcal{S}$\, is countable, in particular, this will cover the case of polygonal \,$\Omega$.

\begin{lemma}\label{l-repa}
Let us suppose that $\Omega$ is convex and conditions (H1), (H2), (H3) and (S) are satisfied. Then, there exists  another partition of \,$\partial\Omega$ into smooth arcs \,$\tilde\chi_i^\pm$ ($i\in I_{\tilde{\chi}} \equiv I_\chi$), $\tilde \Gamma^\pm_i$ ($i\in I_{\tilde{\Gamma}}$ where $I_\Gamma \subset I_{\tilde{\Gamma}}$) and $\tilde{F}_i$ ($i \in I_{\tilde{F}}$ where $I_F \subset I_{\Tilde{F}}$), satisfying the same conditions (H1), (H2) and (H3).
\end{lemma}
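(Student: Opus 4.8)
The plan is to \emph{refine} the given partition by cutting every monotone arc at the singular points it contains, \emph{together with} their partners under the transport map $\bbT$, so that the level‑set matching realized by $\bbT$ survives the cutting. Two structural facts drive the construction: by Lemma \ref{cont-T} the map $\bbT$ is, on each $\Gamma_i^+$ (resp. $\chi_i^+$), a homeomorphism onto $\Gamma_i^-$ (resp. $\chi_i^-$) which extends to the closed arcs sending endpoints to endpoints; and by construction $\bbT_\#f^+=f^-$. Throughout I work with $I_\Gamma,I_\chi$ finite, as in Proposition \ref{strictly_convex_case}, and recall that $\mathcal S\subset\Gamma\cup\chi$, so the flat parts $F_i$ contain no singular points and will not be cut.

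First I would form the $\bbT$‑symmetric singular set
$$
\mathcal S^\ast:=\mathcal S\ \cup\ \bbT\bigl(\mathcal S\cap(\Gamma^+\cup\chi^+)\bigr)\ \cup\ \bbT^{[-1]}\bigl(\mathcal S\cap(\Gamma^-\cup\chi^-)\bigr),
$$
which satisfies $\bbT\bigl(\mathcal S^\ast\cap(\Gamma^+\cup\chi^+)\bigr)=\mathcal S^\ast\cap(\Gamma^-\cup\chi^-)$, and let $Z$ be the closed set built from $\overline{\mathcal S^\ast}$ and the finitely many endpoints of the original arcs, with the vertices $c_i$ deleted. The crucial point is $|f|(Z)=0$: by (S) we have $|f|(\overline{\mathcal S})=0$, and since $\bbT_\#f^+=f^-$ with $\bbT$ injective, $f^-\bigl(\bbT(\overline{\mathcal S}\cap\Gamma_i^+)\bigr)=f^+\bigl(\overline{\mathcal S}\cap\Gamma_i^+\bigr)\le|f|(\overline{\mathcal S})=0$, and similarly for the $\chi_i$ and the $\bbT^{[-1]}$ pieces. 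As $I_\Gamma,I_\chi$ are finite, the closure commutes with these unions; together with nonatomicity of $f$ (to discard the endpoints) this yields $|f|(Z)=0$. Because $\bbT$ extends to a homeomorphism of the closed arcs, $\overline{\mathcal S^\ast}$ is again $\bbT$‑symmetric up to endpoints.

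Next I would remove $Z$ from $\partial\Omega$: the complement is a countable family of disjoint open arcs, none meeting $\mathcal S$, on each of which $g$ is strictly monotone or constant. Pair them by $\bbT$, setting $\tilde\Gamma^-:=\bbT(\tilde\Gamma^+)$; by the $\bbT$‑symmetry of $Z$ each such partner is again a complementary arc. For every retained vertex $c_i$ the two abutting complementary arcs become $\tilde\chi_i^\pm$, giving $I_{\tilde\chi}\equiv I_\chi$; every other matched pair has positive mutual distance and becomes a $\tilde\Gamma_j^\pm$, whence $I_\Gamma\subset I_{\tilde\Gamma}$; the constant arcs are the $\tilde F_j$, which coincide with the old $F_i$ (hence stay maximal), so $I_F\subset I_{\tilde F}$. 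The transport map read off the refined partition via \eqref{df-T} equals $\bbT$ restricted to each sub‑arc, because cutting $\Gamma_i^+$ at $p$ and $\Gamma_i^-$ at $\bbT(p)$ respects the additive TV‑matching; consequently (H2) and (H3) transfer verbatim, since the segments $]x^+,\bbT(x^+)[$ and the inequality \eqref{rH3} involve the same map and a subset of the same points. For (H1): strict monotonicity of $g$ is inherited; $\dist(\tilde\Gamma_j^+,\tilde\Gamma_j^-)\ge\dist(\Gamma_i^+,\Gamma_i^-)>0$; the TV equalities hold because $\tilde\Gamma_j^-=\bbT(\tilde\Gamma_j^+)$ and $\bbT_\#f^+=f^-$; and the refined hulls $\tilde T_j\subset T_i$, $\tilde D_k\subset D_l$ have pairwise disjoint interiors, being separated inside one original hull by transport rays $[p,\bbT(p)]\subset\Omega$ (which, by (H2), cannot be crossed) and inheriting disjointness across distinct originals from $\{T_i,D_i\}$.

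The step I expect to be the main obstacle is producing $Z$ with all three properties simultaneously — closed, $|f|$‑null, and $\bbT$‑symmetric — and, within it, the treatment of a singular vertex $c_i$. Such a $c_i$ is a local extremum of $g$ where the two monotone branches meet with vanishing mutual distance, so it \emph{must} be kept rather than cut: otherwise the two abutting arcs could not be recombined into an admissible $\tilde\Gamma$‑pair (their distance would be $0$, violating (H1)(1)). Verifying that deleting the $c_i$ from $\overline{\mathcal S^\ast}$ still leaves a closed set (i.e. that each $c_i$ is isolated in, or separable from, $\overline{\mathcal S^\ast}$) and that the retained corner is compatible with the later projection argument is the delicate part; everything else reduces to the two facts that $\bbT$ is a level‑matching homeomorphism and that it pushes $f^+$ onto $f^-$.
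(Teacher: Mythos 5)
Your construction is at bottom the same as the paper's: the paper writes $\partial\Omega\setminus\overline{\mathcal S}=\bigcup_k U_k$ and replaces each $\Gamma_k^+$ by the arcs $\Gamma_k^+\cap U_i\cap\bbT^{[-1]}(U_j)$, paired with $\Gamma_k^-\cap\bbT(U_i)\cap U_j$; deleting your $\bbT$-symmetrized set $Z$ and re-pairing by $\bbT$ produces exactly these arcs, and your verifications of (H1)--(H3) for the refined pairs (TV-matching from $\bbT_\#f^+=f^-$, hull disjointness via the separating chords $[p,\bbT(p)]\subset\Omega$, negligibility of the deleted set from (S)) are sound. The genuine gap is the one you flagged yourself, and it is not merely delicate --- it is a case in which your construction fails outright. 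Nothing in the hypotheses prevents $\mathcal S$ from accumulating at a vertex $c_i$: take corners $s_n\to c_i$ inside $\chi_i^+$; then $\overline{\mathcal S}$ contains the countable set $\{s_n\}\cup\{c_i\}$, which is $|f|$-null because $f$ is atomless, so (S) holds. In that situation $\overline{\mathcal S^*}\setminus\{c_i\}$ is not closed, so $Z$ is not closed, its complement is not a disjoint union of open arcs, and no complementary arc abuts $c_i$; the pair $\tilde\chi_i^\pm$ you insist on retaining does not exist, and with it the bookkeeping $I_{\tilde\chi}\equiv I_\chi$. The repair is to give up such a $c_i$: the sub-arcs of $\chi_i^+$ between consecutive cut points are re-paired by $\bbT$ with sub-arcs of $\chi_i^-$, and each such pair is a legitimate $\Gamma$-type pair --- the distance is positive because neither closed sub-arc contains $c_i$ and $\overline{\chi_i^+}\cap\overline{\chi_i^-}=\{c_i\}$ --- countably many pairs being permitted by (H1). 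This reclassification is what the paper's proof does in its case $c_k\in\overline{\mathcal S}$; note that your converse observation (an \emph{isolated} singular vertex must be kept as a $\chi$-vertex, since cutting there would create a ``$\Gamma$-pair'' at zero distance) is correct and is in fact what makes the paper's construction consistent with the remark following the lemma and with Step~2 of Proposition \ref{Prop. convex case with finite arcs}, where $\chi$-pairs with singular vertex are explicitly allowed.

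A second, smaller defect: you restrict throughout to $I_\Gamma,I_\chi$ finite ``as in Proposition \ref{strictly_convex_case}'', but the lemma is stated under (H1)--(H3) and (S) alone and is later invoked precisely in the infinite case (Proposition \ref{Prop. convex case} uses it with infinitely many arcs and infinitely many singular points). Your argument uses finiteness twice: to commute closures with unions when proving $|f|(Z)=0$ and the $\bbT$-symmetry of $\overline{\mathcal S^*}$, and to discard ``the finitely many endpoints''. The paper's formulation avoids forming any global closed deleted set: it intersects each individual arc with the open sets $U_i$ and $\bbT^{[-1]}(U_j)$, and checks negligibility of the removed portion arc by arc, summing over countably many arcs. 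Reorganizing your proof in that per-arc fashion (no global closure, no symmetrized closed set) removes both the finiteness restriction and the closedness question in one stroke.
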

\begin{proof}{Since 
$\partial\Omega\setminus \overline{ \mathcal{S}}$ is open, then there are open arcs $U_k$ ($k\in I \subset \mathbb{N}$) such that
$$
\partial\Omega\setminus \overline{\mathcal{S}} = \bigcup_{k\in I}U_k.
$$
Consider the following families of open arcs
$$
\tilde \Gamma^+_{k,i,j}:=\Gamma_k^+ \cap U_i \cap \bbT^{[-1]}(U_j)\qquad \mbox{and}\qquad \tilde \Gamma^-_{k,i,j}:=\Gamma_k^- \cap \bbT(U_i) \cap U_j,
$$
with $k\in I_{\Gamma} $ and $i,j\in I$. In fact, these arcs $\tilde \Gamma^\pm_{k,i,j}$ are in a one to one correspondence, because we have 
$$
\bbT(\tilde \Gamma^+_{k,i,j}) = \bbT(\Gamma_k^+ \cap U_i \cap \bbT^{[-1]}(U_j)) =\Gamma_k^- \cap \bbT(U_i) \cap U_j
= \tilde \Gamma^-_{k,i,j}.
$$
\\
Moreover, it is clear that $\tilde \Gamma^+_{k,i,j}$ and $\tilde \Gamma^-_{k,i,j}$ inherit the properties of $\Gamma_k^+$, $\Gamma^-_k$.

When we subdivide arcs $\chi_k^\pm$ we proceed slightly differently. We first  construct similar families
$$
\chi_k^+ \cap U_i \cap \bbT^{[-1]}(U_j)\qquad \mbox{and}\qquad \chi_k^- \cap \bbT(U_i) \cap U_j,\quad\ k\in I_\chi,\,\,i,\,j\in I.
$$
We consider two cases: $c_k \in \overline {\mathcal{S}}$\, and \,$c_k \notin \overline{\mathcal{S}}$. In the first case, we set
$$
 \hat \Gamma^+_{k,i,j}:=\chi_k^+ \cap U_i \cap \bbT^{[-1]}(U_j)\qquad \mbox{and}\,\,\,\,\quad  \hat \Gamma^-_{k,i,j}:=\chi_k^- \cap \bbT(U_i) \cap U_j,\quad\ k\in I_\Gamma,\,\,i,\,j\in I.
$$
Since \,$\bbT(\hat \Gamma^+_{k,i,j}) = \hat \Gamma^-_{k,i,j}$, we can see that these arcs satisfy the conditions required for arcs of type $\Gamma$.

In the case when $c_k \notin \overline{\mathcal{S}}$, there exist $i_0,\,j_0\in I$ such that if we set
$$
\hat \chi^+_{k,i_0, j_0}:= \chi^+_k \cap U_{i_0} \cap \bbT^{[-1]}(U_{j_0}), \qquad \hat \chi^-_{k,i_0, j_0}:= \chi^-_k \cap\bbT (U_{i_0}) \cap U_{j_0},
$$
then $\overline{\chi^+_{k,i_0, j_0}}\cap \overline{\chi^-_{k,i_0, j_0}}= \{c_k\}$ and $\chi^+_{k,i_0, j_0}$, $ \chi^-_{k,i_0, j_0}$
inherit the properties of $\chi^+_k$, $\chi_k^-$. This concludes the proof.}  $\qedhere$
\end{proof}

Thanks to Lemma \ref{l-repa}, one can assume that the decomposition of $\partial\Omega$  is such that all arcs $\chi^\pm_i$ ($i\in I_\chi$) and $\Gamma^\pm_i$ ($i\in I_\Gamma$) are smooth. 
However, it is possible  that the point 
$c_i = \overline{\chi^+_i} \cap \overline{\chi^-_i}$ 
is a singular point. Moreover, we note that we do not care about singular points on the flat parts $F_i$ ($i\in I_F$).\\

Now, we aim to extend the result of Proposition \ref{strictly_convex_case} to the case of \,$\Omega$\, being  just convex. We do this by using an approximation argument. First, let us assume that the set of singular points $\mathcal{S}$ is finite. In addition, we 
strengthen our condition (H3) a little bit by assuming 
that for every $m \in \mathbb{N}$, there is a \,$\delta_m >0$\, such that for 
any sequence of points $\{e_{k}^+\}_{1 \leq k \leq m}$ described in (H3), the 
following inequality holds:
$$
\sum_{k=1}^m |e_k^+ - {\bf{T}}(e_k^+)|
\leq
\sum_{k=1}^{m-1} 
|e_k^+ - {\bf{T}}(e_{k+1}^+)| + |e_m^+ - {\bf{T}}(e_{1}^+)| - \delta_m.\leqno(\mbox{H}3)^\prime$$ 



\begin{proposition}\label{Prop. convex case with finite arcs}
Let us assume that $\Omega$ is convex, conditions (H1), (H2) $\&$ (H3)$^{\,\prime}$ are satisfied, the set \,$\mathcal{S}$ is finite and the number of arcs $\chi_i^\pm$ ($i \in I_\chi$) as well as   \,$\Gamma_i^\pm$ ($i \in I_\Gamma$)
is finite. 
Then, ${\bf{T}}$ is an optimal transport map from $f^+$ to $f^-$.
\end{proposition}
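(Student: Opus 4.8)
The plan is to realize ${\bf T}$ as the limit of the optimal maps furnished by Proposition \ref{strictly_convex_case} on a sequence of strictly convex domains shrinking to $\Omega$. First I would choose open bounded strictly convex sets $\Omega_n$ with $\overline\Omega \subset \Omega_n$ and $\overline{\Omega_n}\to\overline\Omega$ in the Hausdorff metric, and write $P_n:\partial\Omega_n\to\overline\Omega$ for the restriction to $\partial\Omega_n$ of the orthogonal projection of Lemma \ref{Lemma 4.7}. I set $g_n:=g\circ P_n$; since $\mathcal S$ is finite and $g$ is continuous, $f^+$ is nonatomic, so $|f|(\overline{\mathcal S})=0$ and assumption (S) holds automatically. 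Invoking Lemma \ref{l-repa} I may assume the arcs $\chi_i^\pm$, $\Gamma_i^\pm$ are smooth and disjoint from $\mathcal S$, except possibly at the vertices $c_i$. By Lemma \ref{Lemma 4.7}(1), $P_n$ is then a homeomorphism over each such arc, and pulling the partition back through $P_n$ yields arcs $\Gamma_{i,n}^\pm$, $\chi_{i,n}^\pm$ and flat parts $F_{i,n}$ on $\partial\Omega_n$.

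Next I would verify that $g_n$ satisfies (H1) on $\Omega_n$. Strict monotonicity transfers because $P_n$ is a monotone homeomorphism on each arc, and the total variations match, $TV(g_n|_{\Gamma_{i,n}^+})=TV(g|_{\Gamma_i^+})=TV(g|_{\Gamma_i^-})=TV(g_n|_{\Gamma_{i,n}^-})$ (and similarly for $\chi$), while the disjointness of the convex hulls $T_i,D_i$ is an open geometric condition and therefore persists for $n$ large. This determines the approximate transport map ${\bf T}_n$ of \eqref{df-T}, and by construction $P_n^{-1}$ conjugates ${\bf T}$ to ${\bf T}_n$, so that for any $e^+$ the points $e_n^+:=P_n^{-1}(e^+)\to e^+$ and ${\bf T}_n(e_n^+)\to{\bf T}(e^+)$ as $n\to\infty$; moreover $g_n\to g$ forces $f_n^\pm\rightharpoonup f^\pm$ for $f_n:=\partial_\tau g_n$.

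The crux is the verification of (H3) for $g_n$ on each $\Omega_n$ for $n$ large, and here I would invoke Remark \ref{Remark 3.7}. Since $\Omega_n$ is strictly convex, it suffices to test \eqref{rH3} on sequences $\{e_{k,n}^+\}_{1\le k\le m}$ whose points lie on distinct arcs; because the number of positive arcs is finite, say $N$, one has $m\le N$, so only finitely many values of $m$ are relevant. For each such $m$, the uniform gap $\delta_m$ of (H3)$'$ together with the convergence of $|e_{k,n}^+-{\bf T}_n(e_{k,n}^+)|$ and of the cross terms $|e_{k,n}^+-{\bf T}_n(e_{k+1,n}^+)|$ to their limit values guarantees that the strict inequality \eqref{rH3} holds for $g_n$ once $n$ exceeds a threshold. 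Taking $n$ past the finitely many thresholds at once, $g_n$ satisfies (H1) and (H3) on the strictly convex $\Omega_n$.

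Finally I would apply Proposition \ref{strictly_convex_case} on each such $\Omega_n$ to conclude that the optimal plan is $\gamma_n=(\id,{\bf T}_n)_\# f_n^+$. Passing to the limit, Lemma \ref{l-conv}(1) gives, along a subsequence, $\gamma_n\rightharpoonup\gamma$ with $\gamma$ optimal for $f^\pm$, and Lemma \ref{l-conv}(2) applied to the converging rays $[e_n^+,{\bf T}_n(e_n^+)]\to[e^+,{\bf T}(e^+)]$ shows that $[e^+,{\bf T}(e^+)]$ is a transport ray for the limit problem, i.e. $\phi(e^+)-\phi({\bf T}(e^+))=|e^+-{\bf T}(e^+)|$ for the limiting Kantorovich potential $\phi$. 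Since ${\bf T}_\# f^+=f^-$ by construction, this identity integrated against $f^+$ and combined with the duality \eqref{Kantorovich}$=$\eqref{dual} shows that $(\id,{\bf T})_\# f^+$ attains the optimal value; hence ${\bf T}$ is an optimal transport map between $f^+$ and $f^-$. I expect the principal difficulty to lie in Steps~2--3: securing the weak convergence $f_n^\pm\rightharpoonup f^\pm$ despite the distortion of $P_n$ near the singular points (which is exactly what (S) controls), and coordinating the finitely many gap estimates $\delta_m$ so that a single index $n$ renders (H3) valid for all admissible test sequences simultaneously.
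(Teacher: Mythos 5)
Your proposal is correct and follows essentially the same route as the paper: approximation by strictly convex domains containing $\overline{\Omega}$, transfer of the data through the orthogonal projection of Lemma \ref{Lemma 4.7} (with $\chi$-type arcs whose vertex $c_i$ is singular turning into $\Gamma$-type pairs separated by a flat part), verification of (H3) for $g_n$ from (H3)$'$ via Remark \ref{Remark 3.7} and the uniform error bound $d_H(\overline{\Omega}_n,\overline{\Omega})$, and passage to the limit with Lemma \ref{l-conv}. The only cosmetic difference is the final step: the paper identifies the weak limit directly, showing $\gamma_n=(\id,\bbT_n)_{\#}f_n^+\rightharpoonup(\id,\bbT)_{\#}f^+$ by testing against continuous functions, whereas you deduce optimality of $(\id,\bbT)_{\#}f^+$ from the persistence of transport rays (Lemma \ref{l-conv}(2)) combined with $\bbT_{\#}f^+=f^-$ and the duality \eqref{Kantorovich}$=$\eqref{dual}; both arguments are valid and rest on the same convergence facts.
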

\begin{proof}
Our task will be to find a sequence of decreasing strictly convex regions $\Omega_n$ such that $\overline{\Omega}_n$ converges to $\overline{\Omega}$ in the Hausdorff distance. At the same time, we need to approximate the boundary datum \,$g$\, by a sequence of functions $g_n$ defined on $\partial\Omega_n$. Our choice of $(\Omega_n, g_n)$ must be such that for every $n \in \mathbb{N}$, the boundary $\partial\Omega_n$ can be decomposed into arcs $\chi_{i,n}$, $\Gamma_{i,n}$ and $F_{i,n}$ for appropriate sets of indices. 

We divide the proof into several steps:

{\it Step 1.} Since $\Omega$ is convex, then due to \cite[Theorem 20.4]{Rockafellar} one can  always 
find a sequence of decreasing closed polygons $\Delta_n$ containing \,$\Omega$ and
converging to $\overline{\Omega}$ in the Hausdorff distance. After that, we use the argument in \cite{rs1} to construct a strictly convex region $\tilde\Omega_n$ containing $\Delta_n$ such that the  Hausdorff distance between them does not exceed $\frac1n$. 
Thus, we obtain a decreasing sequence of strictly convex domains $\Omega_n$ such that $\overline{\Omega}_n \to \overline{\Omega}$ in the Hausdorff metric. Indeed, if we set $\Omega_n = \bigcap_{i=1}^n\tilde{\Omega}_i$, then 
$$
\Omega \subset \Omega_{n+1} = \tilde\Omega_{n+1}\cap \Omega_n \subset \Omega_n.
$$

{\it Step 2.}  Due to Lemma \ref{l-repa} we may assume that all arcs $\chi^\pm_i$ and $\Gamma^\pm_j$ are smooth because \,$\mathcal{S}$\, is finite.

We recall that $\tilde P$, the orthogonal projection 
onto $\overline{\Omega}$, is Lipschitz continuous on \,$\mathbb{R}^2$. Let us fix $n \in \mathbb{N}$ and set $\tilde P_n:= \tilde P|_{\partial\Omega_n}$, $\mathcal{C}_{i,n}^\pm:=\tilde P_n^{-1}(\chi_i^\pm)$ for $i\in I_\chi$, and ${\mathcal{C}^\prime}^\pm_{i,n}:=\tilde P_n^{-1}(\Gamma_i^\pm)$ for $i \in I_\Gamma$,
$\mathcal{C}_n:=\bigcup_{i \in 
I_\chi \cup I_\Gamma}(\mathcal{C}_{i,n}^\pm \cup {\mathcal{C}^{\prime}}^\pm_{i,n})$.
Let $P_n: \mathcal{C}_n \mapsto \Gamma \cup \chi$ be a further restriction of $\tilde{P}_n$ to \,$\mathcal{C}_n$.  Thanks to Lemma \ref{Lemma 4.7}, we see that $P_n$ 
is one-to-one, hence by the open mapping theorem the inverse of $P_n$ is continuous too. 
In the sequel, we
denote by 
$P^{-1}_{n}:\Gamma \cup \chi \mapsto \mathcal{C}_{n}$ the inverse map of $P_n$. Then, we define a measure $\tilde f_n$ on 
{$\cC_n$} as follows:
    $$\tilde f_n:= (P_{n}^{-1})_{\#}f.$$
\\
Since $f$ is concentrated on \,$\Gamma \cup \chi$\, and \,$P_{n}^{-1}$ is continuous on \,$\Gamma \cup \chi$, then the measure $\tilde f_n$ is well defined. Now, we extend it to a Borel measure on $\partial \Omega_n$ by setting
$$
f_n(B) := \tilde f_n (B \cap \cC_n),
$$
for any Borel set $B\subset\partial\Omega_n$.
By the definition of 
$f_n$ we have  that $|f_n|(\partial\Omega_n \backslash \mathcal{C}_{n})=0$. It is also clear that $f_n(\partial\Omega_n)=f(\partial\Omega)=0$ and
$|f_n|(P^{-1}(\mathcal{S})\cap \partial\Omega_n) = 0.$

Moreover, if $x_1,\,x_2 \in \mathcal{C}_{n}$ then we have
$$f_n(\arc{x_1x_2})=f(\arc{P_n(x_1)P_n(x_2)})$$
because 
$\arc{P_n(x_1)P_n(x_2)}=P_n(\arc{x_1 x_2})$. 

After these preparations, we define the trace function $g_n$ on $\partial \Omega_n$. Since it has 
to satisfy 
$\partial_\tau g_n=f_n$, then we proceed
as follows. For a fixed $x_0 \in \partial\Omega\backslash \mathcal{S}$, we define $x_n := P_n^{-1}(x_0)\in \partial\Omega_n$. Then, we set
$$g_n(x):=f_n(\arc{x_{n}x}),\,\,\mbox{for every }\,\,x \in \partial\Omega_n.$$\\
Since $f$ is atomless,  so is $f_n$. Hence, $g_n \in C(\partial\Omega_n)$.\\

Let us discuss the forms of the sets \,$\cC^\pm_{i,n}$ and ${\cC^\prime}^\pm_{i,n}$. We notice that 
${\mathcal{C}^\prime}^\pm_{i,n}=P_n^{-1}(\Gamma_i^\pm)$ are  two arcs of the form $\Gamma_{i,n}^\pm$, because we have \,$\overline{{\mathcal{C}^\prime}^{\,+}_{i,n}}\cap \overline{{\mathcal{C}^\prime}^{\,-}_{i,n}} =\emptyset$\, and
$$f_n({\mathcal{C}^\prime}^{\,+}_{i,n})=f^+(\Gamma_i^+)\,\,\,\,\,\,\mbox{and}\,\,\,\,\,\,f_n({\mathcal{C}^\prime}^{\,-}_{i,n})=f^-(\Gamma_i^-).$$
Moreover, $g_n$ is strictly increasing on ${\mathcal{C}^\prime}^{\,+}_{i,n}$
(resp. decreasing on ${\mathcal{C}^\prime}^{\,-}_{i,n}$),  because if \,$x_1,\,x_2 \in {\mathcal{C}^\prime}^{\,\pm}_{i,n}$ are such that $x_1<x_2$, then $P_n(x_1),\,P_n(x_2) \in \Gamma_i^\pm$ with $P_n(x_1)<P_n(x_2)$ and, we have
$$g_n(x_2)-g_n(x_1)=f_n(\arc{x_1x_2})=f(\arc{P_n(x_1)P_n(x_2)}),$$
where $\arc{x_1 x_2} \subset {\mathcal{C}^\prime}^{\pm}_{i,n}$ and $\arc{P_n(x_1)P_n(x_2)}=P_n(\arc{x_1 x_2}) \subset \Gamma_i^\pm$.
 
The above  argument applies too when the 
point $c_i$ (the common endpoint of $\chi^\pm_i$) 
is a singular point. Hence, 
$\mathcal{C}_{i,n}^\pm=P_n^{-1}(\chi^\pm_i)$ are  two arcs of the form $\Gamma_{i,n}^\pm$, because by Lemma \ref{Lemma 4.7}, $P_n^{-1}(c_i)$ is 
an arc of positive $\cH^1$ measure (see Figure \ref{Fig. 1}).
\begin{figure}[h]
\begin{tikzpicture}
  \draw (0, 0) rectangle (4, 4);
    \draw[red,line width=2pt] (1,0)--(0,1);
     \draw[blue,line width=2pt] (3,4)--(4,3);
      \draw[blue,line width=2pt] (2,4)--(4,2);
     
\node at (0.4, -.3) {$\chi_1^+$};
\node at (-.3, .5) {$\chi_1^-$};
\node at (2.4, 4.5) {$\Gamma_1^-$};
\node at (4.5, 2.4) {$\Gamma_1^+$};
\node at (2,2){ $\Omega$};

\draw (6,2) arc (0:360:4);
\draw[dashed] (0,0)--(0,-1.4);
\draw[dashed] (1,0)--(1,-1.8);
\draw[dashed] (0,0)--(-1.4,0);
\draw[dashed] (0,1)--(-1.8,1);

\draw[dashed] (3,4)--(3,5.8);
\draw[dashed] (2,4)--(2,6);
\draw[dashed] (4,3)--(5.8,3);
\draw[dashed] (4,2)--(6,2);

\node at (0.5, -2.4) {$\Gamma_{1,n}^+=P_n^{-1}(\chi_1^+)$};
\node at (-3,0.5) {$\Gamma_{1,n}^-=P_n^{-1}(\chi_1^-)$};

\node at (2.5, 6.3) {$\Gamma_{2,n}^-=P_n^{-1}(\Gamma_1^-)$};
\node at (7.3,2.5) {$\Gamma_{2,n}^+=P_n^{-1}(\Gamma_1^+)$};

\node at (0,5.1) 
{$\Omega_n$};

\draw[color=red] (0,-1.42)--(-1.42,0);
\draw[color=red] (1,-1.85)--(-1.85,1);
\draw[color=blue] (5.9,2.95)--(2.96,5.9);
\draw[color=blue] (2,6)--(6,2);


      
\end{tikzpicture}
\caption{}
\label{Fig. 1}
\end{figure}


Finally, when $c_i\not\in \mathcal{S}$, then by Lemma \ref{Lemma 4.7}, $P_n^{-1}(c_i)$ is a singleton and $\overline{\mathcal{C}_{i,n}^+}\cap \overline{ \mathcal{C}_{i,n}^-} =\{P_n^{-1}(c_i)\}$. Hence,
$\mathcal{C}_{i,n}^\pm=P_n^{-1}(\chi_i^\pm)$ are two arcs of the form $\chi_{i,n}^\pm$. Indeed, we have
$$f_n(\mathcal{C}_{i,n}^+ \cup \mathcal{C}_{i,n}^-)=f(\chi_i)=0.$$
We also see  that $g_n$ is strictly increasing on \,$\mathcal{C}_{i,n}^+$ (resp. decreasing on \,$\mathcal{C}_{i,n}^-$), because if \,$x_1,\,x_2 \in \mathcal{C}_{i,n}^\pm$ are such that $x_1<x_2$, then $P_n(x_1),\,P_n(x_2) \in \chi_i^\pm$ with $P_n(x_1)<P_n(x_2)$. Moreover, 
 $$g_n(x_2)-g_n(x_1)=f_n(\arc{x_1x_2})=f(\arc{P_n(x_1)P_n(x_2)}), $$
 where \,$\arc{x_1 x_2} \subset \mathcal{C}_{i,n}^\pm$\, and \,$\arc{P_n(x_1)P_n(x_2)}=P_n(\arc{x_1 x_2}) \subset \chi_i^\pm$.\\
 
{\it Step 3.} Now, let us check that the assumption (H3) is also satisfied by $g_n$.
Let ${\bf{T}}_n$ be the transportation map defined on $\Gamma_n^+ \cup \chi_n^+$ (see \eqref{df-T}). First, we see that if $e^+ \in \Gamma_n^+ \cup \chi_n^+$,
then we have
\begin{equation}\label{r-dfTn}
e^-:={\bf{T}}_n(e^+)=P_{n}^{-1}({\bf{T}}(P_n(e^+))).
\end{equation}
Indeed, if 
$e^+ \in \Gamma_{i,n}^+$, $i \in I_{\Gamma_n}$, ({the argument when $e^+ \in \chi_{i,n}^+$, $i \in I_{\chi_n}$, is the same and it will be omitted}), 
then $P_n(e^+) \in \Gamma_j^+$ for some $j \in I_\Gamma$ or $P_n(e^+) \in \chi_j^+$ for some $j \in I_\chi$, in case $c_j$ is a singular point. 
As a result, ${\bf{T}}(P_n(e^+)) \in \Gamma_j^-$ or ${\bf{T}}(P_n(e^+)) \in \chi_j^-$, 
hence $P_{n}^{-1}({\bf{T}}(P_n(e^+))) \in \Gamma_{i,n}^-$. 
Due to \eqref{r-pro} we also notice that 
$$f_n(\arc{e^+P_{n}^{-1}({\bf{T}}(P_n(e^+)))})=f(\arc{P_n(e^+){\bf{T}}(P_n(e^+))})=0.$$

We can use (\ref{r-dfTn}) again to 
deduce that for all $x\in \partial\Omega_n$ we have
\begin{equation}\label{r-dsTTn}
|\bbT_n(x) - \bbT(P_n(x))| = |P^{-1}( \bbT(P_n(x))) -  \bbT(P_n(x))|\le d_H(\overline{\Omega}_n, \overline{\Omega})\to 0,\,\,\,\,\,\hbox{when }n\to \infty.
\end{equation}

Let $\{e_{i}^+\}_{1 \leq i \leq m}$ be any finite sequence of points such that $e_{i}^+ \in \chi_{i,n}^+ \cup \Gamma_{i,n}^+$. We recall that since $\Omega_n$ is strictly convex, then by Remark \ref{Remark 3.7}, it is sufficient to consider points belonging to different arcs $\Gamma_{i,n}^+$ or $\chi_{i,n}^+$. In this way, the index $m$ appearing in (H3) will be at most the number of arcs $\chi_i^\pm$ and $\Gamma_i^\pm$, which is finite and does not depend on $n$. Now, applying \eqref{r-dsTTn} we see that
\begin{align*}
\sum_{i=1}^{m} |e_i^+ - {\bf{T}}_n(e_i^+)|&\leq \sum_{i=1}^{m} |e_i^+ - P_n(e_i^+)|+|P_n(e_i^+) - {\bf{T}}(P_n(e_i^+))|+|{\bf{T}}(P_n(e_i^+)) - {\bf{T}}_n(e_i^+)|\\
&\leq \sum_{i=1}^{m} |P_n(e_i^+) - {\bf{T}}(P_n(e_i^+))|+2m\,\varepsilon_n,
\end{align*}
where $\varepsilon_n:=d_H(\overline{\Omega}_n, \overline{\Omega})$. 
Moreover, we have the following inequality:
\begin{align*}
    \sum_{i=1}^{m-1}
|P_n(e_i^+) - {\bf{T}}(P_n(e_{i+1}^+))| &+ |P_n(e_m^+ )-{\bf{T}}(P_n(e_{1}^+))|\\
&\leq  
\sum_{i=1}^{m-1}
\left(|P_n(e_i^+) - e_{i}^+| 
+ |e_i^+ - {\bf{T}}_n(e_{i+1}^+)| + |{\bf{T}}_n(e_{i+1}^+)-{\bf{T}}(P_n(e_{i+1}^+))|\right)
\\
 &\qquad\qquad+\,|P_n(e_m^+) - e_{m}^+| + |e_m^+ - {\bf{T}}_n(e_{1}^+)|+|{\bf{T}}_n(e_{1}^+)-{\bf{T}}(P_n(e_{1}^+))|\\
 &\leq  
\sum_{i=1}^{m-1}
 |e_i^+ - {\bf{T}}_n(e_{i+1}^+)|  + |e_m^+ - {\bf{T}}_n(e_{1}^+)| + 2m\,\varepsilon_n.
\end{align*}
Since by assumption $g$ satisfies (H3)$^{\,\prime}$,
there exists a $\delta_m>0$ (independent of $n$) such that
 $$\sum_{i=1}^{m} |P_n(e_i^+) - {\bf{T}}(P_n(e_i^+))| \leq\,\,
\sum_{i=1}^{m-1}
|P_n(e_i^+) - {\bf{T}}(P_n(e_{i+1}^+))| + |P_n(e_m^+ )-{\bf{T}}(P_n(e_{1}^+))|-\delta_m.$$
These inequalities imply that $g_n$ satisfies the assumption (H3) because for $n$ large enough, we get the following inequality:
\begin{align*}
 \sum_{i=1}^{m} |e_i^+ - {\bf{T}}_n(e_i^+)| &\leq \sum_{i=1}^{m-1}
 |e_i^+ - {\bf{T}}_n(e_{i+1}^+)|  + |e_m^+ - {\bf{T}}_n(e_{1}^+)| -(\delta_m - 4m\,\varepsilon_n )  \\
 & <\sum_{i=1}^{m-1}
 |e_i^+ - {\bf{T}}_n(e_{i+1}^+)|  + |e_m^+ - {\bf{T}}_n(e_{1}^+)|.
\end{align*}
 
{\it Step 4.} For the sake of studying the convergence of the sequence of measures $f_n \in \mathcal{M}(\partial\Omega_n)$, we need first to extend them all to a common domain. For this purpose, we fix any $n_0 \in \mathbb{N}$. For $n\ge n_0$, we define a Borel measure $\tilde f_n$ on $\overline{\Omega}_{n_0}$ by setting
$\tilde f_n(A) := f_n(A \cap \partial\Omega_n)$, for every Borel set $A$. For the sake of a convenient notation, we will drop the tildes. Let $f_n^+$ and $f_n^-$ be the positive and negative parts of $f_n$. Hence, we claim that $f_n^\pm \rightharpoonup f^\pm$. Indeed, for any continuous function $\varphi$ on $\overline{\Omega}_{n_0}$, one has
$$
\lb f_n^\pm,\varphi\rb= \lb(P_{n}^{-1})_{\#}f^\pm,\varphi\rb= \int_{\partial\Omega} \varphi(P_{n}^{-1}(x))\,\mathrm{d}f^\pm(x) \rightarrow  \int_{\partial\Omega} \varphi(x)\,\mathrm{d}f^\pm(x)=\lb f^\pm,\varphi\rb$$
because due to (\ref{r-dsTTn}), $P_n^{-1}(x)$  converges to $x$, for all $x\in \Gamma \cup \chi$.

Let $\gamma_n$ be an optimal transport plan in Problem \eqref{Kantorovich} between $f_n^+$ and $f_n^-$, where $\overline{\Omega}_{n_0}$ plays now the role of $\overline{\Omega}$. By Lemma \ref{l-conv}, we infer that
up to a subsequence,
$\gamma_n$ weakly converges to a measure $\gamma$ in  $\mathcal{M}^+(\overline{\Omega}_{n_0} \times\overline{\Omega}_{n_0} )$ with $(\Pi_x)_{\#}\gamma=f^+$, $(\Pi_y)_{\#}\gamma=f^-$. Moreover, $\gamma$
is an optimal transportation plan between $f^+$ and $f^-$.


Yet, due to Proposition \ref{strictly_convex_case}, we have that $\gamma_n=(Id,{\bf{T}}_n)_{\#}{f_n^+}$.
We will show that $\gamma=(Id,{\bf{T}})_{\#}{f^+}$. Take $\xi \in C(\overline{\Omega} \times \overline{\Omega})$, then we have
$$\int_{\overline{\Omega}\times \overline{\Omega}} \xi(x,y)\,\mathrm{d}\gamma_n(x,y)=\int_{\overline{\Omega}} \xi(x,{\bf{T}}_n(x))\,\mathrm{d}f_n^+(x)=\int_{\overline{\Omega}} \xi(x,{\bf{T}}_n(P_n^{-1}(x)))\,\mathrm{d}f^+(x)$$
$$\rightarrow \int_{\overline{\Omega}} \xi(x,{\bf{T}}(x))\,\mathrm{d}f^+(x)$$\\
where the convergence of
${\bf{T}}_n(P_{n}^{-1}(x))$  to  ${\bf{T}}(x)$ follows from (\ref{r-dsTTn}). 
This implies that $\gamma=(Id,{\bf{T}})_{\#}f^+$. 
\end{proof}
We will see in the next proposition that one can relax (H3)$^{\prime}$ in Proposition \ref{Prop. convex case with finite arcs} by showing that (H3) is in fact sufficient to get that ${\bf{T}}$ is an optimal transport map.
\begin{proposition}\label{NEW Prop}
Assume $\Omega$ is convex, (H1), (H2) $\&$ (H3) hold, \,$\mathcal{S}$ is finite and the number of arcs \,$\chi_i^\pm$ ($i \in I_\chi$)\, and \,$\Gamma_i^\pm$ ($i \in I_\Gamma$)
is finite. Then,  ${\bf{T}}$ is an optimal transport map from $f^+$ to $f^-$. 
\end{proposition}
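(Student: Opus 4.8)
The plan is to show that, under the finiteness hypotheses in the statement, the strict inequality (H3) already forces the quantitative condition (H3)$'$ used in Proposition \ref{Prop. convex case with finite arcs}; once (H3)$'$ is available, that proposition applies verbatim and gives that ${\bf T}$ is an optimal transport map from $f^+$ to $f^-$. So the whole task is to upgrade (H3) to (H3)$'$, i.e. to produce, for each admissible length $m$, a uniform margin $\delta_m>0$.

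First I would reduce the problem to a finite collection of compact minimizations. By Remark \ref{Remark 3.7}, when testing the cyclic inequality it suffices to take sequences $\{e_k^+\}_{1\le k\le m}$ whose points lie on pairwise distinct arcs among the $\Gamma_i^+$ and $\chi_i^+$. Since these arcs are finite in number, say $N$, we have $m\le N$ and there are only finitely many choices of $m$ distinct arcs together with a cyclic order. Fixing one such choice $A_1,\dots,A_m$, I set on the compact product $K:=\overline{A_1}\times\cdots\times\overline{A_m}$ the gap functional
$$
G(e_1^+,\dots,e_m^+):=\sum_{k=1}^{m-1}|e_k^+-{\bf T}(e_{k+1}^+)|+|e_m^+-{\bf T}(e_1^+)|-\sum_{k=1}^{m}|e_k^+-{\bf T}(e_k^+)|,
$$
which is continuous on $K$ because, by Lemma \ref{cont-T}, ${\bf T}$ extends to a homeomorphism of each closed arc. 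Then $\delta_m:=\min_{\text{configurations}}\min_K G$ is exactly the constant demanded by (H3)$'$, provided I can show $G>0$ on all of $K$.

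The structural facts that should make this true are two. First, distinct ``$+$''-arcs have disjoint closures: an increasing arc is followed and preceded on $\partial\Omega$ by a decreasing arc or a flat part (a local extremum of $g$ always separates a ``$+$'' arc from a ``$-$'' arc), so no two of the $\Gamma_i^+,\chi_i^+$ are adjacent, and their evaluation points on $K$ stay at mutually positive distance even on $\partial K$. Second, by condition (H1)(4) each transport segment $[e_k^+,{\bf T}(e_k^+)]$ is contained in the convex hull $T_i$ or $D_i$ of its arc, and these hulls are mutually disjoint; hence the $m$ transport segments are pairwise disjoint. Consequently the identity pairing of the $m$ sources with their images is the unique shortest assignment, so every other pairing — in particular the cyclic shift defining $G$ — is strictly longer, giving $G>0$; on the open product this is precisely (H3).

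The main obstacle is exactly the strict positivity of $G$ on the boundary $\partial K$, i.e. at configurations where some $e_k^+$ is an arc endpoint: there the ``pairwise disjoint segments $\Rightarrow$ unique shortest pairing'' argument must be checked to survive, since a priori two segments could touch at a limiting endpoint and produce the degenerate equality $G=0$. I would settle this using the disjointness of the (closed) hulls $T_i,D_i$ together with the disjointness of the closed ``$+$''-arcs, which keeps the boundary segments disjoint and rules out the nested/collinear alignment that equality in the Euclidean assignment inequality would require; continuity of $G$ then upgrades the interior strict inequality to $G>0$ on all of the compact $K$. Its minimum $\delta>0$ is attained, and taking the least such minimum over the finitely many configurations of each length yields the constants $\delta_m$ of (H3)$'$. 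With (H3)$'$ established, Proposition \ref{Prop. convex case with finite arcs} concludes that ${\bf T}$ is an optimal transport map from $f^+$ to $f^-$.
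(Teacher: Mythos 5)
Your reduction to Proposition \ref{Prop. convex case with finite arcs} is the right target, but the bridge you build to it fails: the claim that under the finiteness hypotheses (H3) already forces (H3)$^\prime$, i.e.\ that your gap functional $G$ is strictly positive on the \emph{closed} product $K=\overline{A_1}\times\cdots\times\overline{A_m}$, is false. The paper's own Example \ref{Example1} with $\delta=\tfrac{1}{2+\sqrt{2}}$ is a counterexample in which every structural fact you invoke holds and yet the conclusion fails: the four arcs $\chi_i^{+}$ have pairwise disjoint closures, the hulls $D_i$ are mutually disjoint, and (H3) holds on the open arcs (the paper notes it holds for all $\delta\le\tfrac{1}{2+\sqrt{2}}$); but taking $e_i^+$ at arclength parameter $t$ on each $\chi_i^+$ gives $\sum_{i}|e_i^+-\bbT(e_i^+)|=4\sqrt{2}\,t$ against the cyclic sum $4(1-2t)$, so the gap equals $4\bigl(1-(2+\sqrt{2})t\bigr)$, which is positive for $t<\delta$ but tends to $0$ as $t\to\delta$. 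Hence $G$ vanishes at a corner configuration of $\partial K$ and no $\delta_4>0$ as required by (H3)$^\prime$ exists. The same configuration refutes your supporting step that pairwise disjoint transport segments make the identity pairing the \emph{unique} shortest assignment: in the limit, the four pairwise disjoint diagonal segments $[(\delta,0),(0,\delta)]$, $[(1-\delta,0),(1,\delta)]$, etc.\ have total length exactly equal to that of the cyclic re-pairing along the sides of the square. Disjointness excludes crossings, but it does not exclude equality of the two pairings' total lengths at boundary configurations, which is exactly the degeneracy you needed to rule out.

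This is not a repairable detail of your argument but the core difficulty the proposition addresses, and the paper circumvents it by approximating the \emph{data} rather than upgrading the hypothesis: for each $n$ it truncates every arc, cutting off total variation $\tfrac1n$ near the endpoints $c_i^\pm$ (resp.\ $a_i^\pm,b_i^\pm$) and declaring the removed pieces flat parts. The truncated arcs are compactly contained in the original open arcs, so (H3) together with the continuity of $\bbT$ (Lemma \ref{cont-T}) yields (H3)$^\prime$ for the truncated datum $g_n$ — a strictly positive continuous function on a compact set has a positive minimum, which is exactly your compactness argument, but now legitimately applied away from the arc endpoints. Proposition \ref{Prop. convex case with finite arcs} then shows that $\gamma_n=(Id,\bbT_n)_{\#}f_n^+$ is optimal; since $\bbT_n=\bbT$ on the truncated arcs and $f_n^\pm\rightharpoonup f^\pm$, Lemma \ref{l-conv} allows passage to the limit and gives that $(Id,\bbT)_{\#}f^+$ is an optimal plan. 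Any correct proof of Proposition \ref{NEW Prop} must contain an approximation (or an equivalent limiting device) of this kind; a direct compactness upgrade of (H3) to (H3)$^\prime$ cannot work.
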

\begin{proof}
We will construct a sequence $g_n \in C(\partial\Omega) \cap BV(\partial\Omega)$ satisfying (H3)$^\prime$ and converging uniformly to $g$.

We set $$t^\star:=\min_{i \in I_\Gamma \cup I_\chi}\{TV(g\res\chi_i^\pm),TV(g\res\Gamma_i^\pm)\}>0.$$ 
We fix $n \in \mathbb{N}^\star$ large enough so that $\frac{1}{n}<t^\star$.
For every 
$i\in I_\chi $, there exists $c_{i,n}^\pm \in \chi_i^\pm$ such that $|g(c_{i,n}^\pm)-g(c_i^\pm)|=\frac{1}{n}$. For every $ i \in I_\Gamma$, there are
$a_{i,n}^\pm,\,b_{i,n}^\pm \in \Gamma_i^\pm$ such that $g(a_{i,n}^\pm)=g(a_i^\pm)+\frac{1}{n}$ and $g(b_{i,n}^\pm)=g(b_i^\pm)-\frac{1}{n}$. Set $I^n_\chi=I_\chi$, $I^n_\Gamma=I_\Gamma$ and 
$
I^n_F = I_F \cup I_\chi
 \cup 
I_\Gamma. 
$
So, we define the arcs
$$\chi_{i,n}^\pm=\arc{\,c_i\,c_{i,n}^\pm}\,\,(i \in I^n_\chi),\,\,\,\,\,\,\,\,\,\,\,\,\,\,\Gamma_{i,n}^\pm=\arc{a_{i,n}^\pm b_{i,n}^\pm}\,\,\,(i \in I^n_\Gamma),$$ 
and 
$$F_{i,n}=
\begin{cases}
  F_i & \,\,\,\,\,\,\hbox{if }\,\,\,\,\, i \in I_F\\
  \Gamma^\pm_i \setminus \Gamma_{i,n}^\pm & \,\,\,\,\,\,\hbox{if }\,\,\,\,\, i \in I^n_\Gamma\\
  \chi^\pm_i \setminus \chi_{i,n}^\pm & \,\,\,\,\,\,\hbox{if }\,\,\,\,\, i \in I^n_\chi
  \end{cases}.$$\\

We define measure $f_n$ on $\partial\Omega$ as follows:
$$
f_n:= \sum_{i\in I_\chi^n}{f}\LC{\chi_{i,n}}+
 \sum_{i\in I_\Gamma^n} {f}\LC\Gamma_{i,n}.
$$
Then, we have $f_n \rightharpoonup f$. Now, set
$$
g_n(x) = g(x_0) + f_n(\arc{x_0 x}).
$$\\
We construct the corresponding ${\bf{T}}_n$ as in \eqref{df-T}. 
Since $g$ satisfies (H3) and the arcs $\chi_{i,n}$ (resp. $\Gamma_{i,n}$) are compactly contained in the open arcs $\chi_i$ (resp. $\Gamma_i$) and ${\bf{T}}_n$  is continuous on the compact set 
$\overline{\Gamma_n^+} \cup \overline{\chi_n^+} \subset \Gamma^+ \cup \chi^+$
(see Lemma \ref{cont-T}), then 
we deduce that  (H3)$^\prime$ is satisfied with this choice of boundary datum 
$g_n$. {Thus, by Proposition \ref{Prop. convex case with finite arcs}, we get that $\bbT_n$ is an optimal transport map, i.e. $\gamma_n = (Id,\bbT_n)_\# f^+_n$ solves \eqref{Kantorovich} between $f_n^+$ and $f_n^-$. We notice that the construction of new arcs yields that for every $x\in \Gamma_n^+\cup \chi_n^+$, we have 
\begin{equation}\label{r-Tn}
\bbT_n(x)={\bf{T}}(x).
\end{equation}}
Thus, one can show again that $\gamma_n \rightharpoonup \gamma=(Id,\bbT)_\# f^+$ and $\gamma$ is an optimal transport plan between $f^+$ and $f^-$. $\qedhere$
\end{proof}

\bigskip
Now, we take another approximation in order to cover the case of convex domain with infinite number of arcs $\chi_i^\pm$ ($i \in I_\chi$) and $\Gamma_i^\pm$ ($i \in I_\Gamma$) and, possibly an infinite number of singular points.
More precisely, we have the following:
\begin{proposition}\label{Prop. convex case}
Assume that \,$\Omega$ is convex and conditions (H1), (H2), (H3) and {(S)} hold. Then, ${\bf{T}}$ is an optimal transport map from $f^+$ to $f^-$. 
\end{proposition}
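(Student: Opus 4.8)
The plan is to remove the two finiteness assumptions of Proposition \ref{NEW Prop} (finitely many arcs and finite $\mathcal{S}$) by a single approximation of the datum $f$ by measures supported on finitely many \emph{smooth} arc-pairs, and then to pass to the limit with Lemma \ref{l-conv}. First I would invoke Lemma \ref{l-repa}: since (H1)--(H3) and (S) hold, we may replace the given decomposition by one into smooth arcs $\tilde\chi_i^\pm$, $\tilde\Gamma_i^\pm$ (and flat parts) that avoid $\overline{\mathcal S}$ and still satisfy (H1)--(H3). On this new decomposition the singular set meeting $\Gamma\cup\chi$ is empty, although the collection of arc-pairs is now only at most countable. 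Crucially, the re-partition of Lemma \ref{l-repa} is built from $\bbT$ and $\bbT^{[-1]}$, so the transportation map attached to the new decomposition coincides with the original $\bbT$; hence it suffices to prove that this relabelled $\bbT$ is optimal.

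Next I would truncate to finitely many arc-pairs. Enumerating the countable family of pairs, set $f_n := \sum_{k\le n} f\res(\text{$k$-th pair})$, and let $A_n$ be the union of the first $n$ arc-pairs, so that $f_n^\pm = f^\pm\res A_n$. Each pair is balanced, $TV(g\res\tilde\Gamma_k^+)=TV(g\res\tilde\Gamma_k^-)$ and likewise for $\tilde\chi$, so $f_n(\partial\Omega)=0$. Because $|f|$ is finite and concentrated on $\Gamma\cup\chi$ (as $|f|(F_i)=0$ and, by (S), $|f|(\overline{\mathcal S})=0$), the tail satisfies $|f-f_n|(\partial\Omega)\to0$, whence $f_n\rightharpoonup f$ and $f_n^\pm\rightharpoonup f^\pm$. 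For each fixed $n$ the datum $f_n$ is carried by finitely many smooth arcs with $\mathcal S=\emptyset$, and it inherits (H1), (H2), (H3) from $f$: the admissible sequences for $f_n$ are drawn from a subcollection of the arcs of $f$, and the associated transport map $\bbT_n$ is the restriction $\bbT|_{\Gamma_n^+\cup\chi_n^+}$, so both the segment condition (H2) and the cyclical inequality (H3) transfer verbatim. Proposition \ref{NEW Prop} then applies and yields that $\gamma_n := (Id,\bbT)_\# f_n^+$ is an optimal transport plan between $f_n^+$ and $f_n^-$.

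Finally I would pass to the limit. By Lemma \ref{l-conv}(1), along a subsequence $\gamma_n\rightharpoonup\gamma$, where $\gamma$ is an optimal transport plan between $f^+$ and $f^-$; it remains to identify $\gamma$. For $\xi\in C(\overline\Omega\times\overline\Omega)$,
\[
\int \xi\,\dd\gamma_n = \int \xi(x,\bbT(x))\,\dd f_n^+(x) = \int_{A_n}\xi(x,\bbT(x))\,\dd f^+(x),
\]
and since $x\mapsto\xi(x,\bbT(x))$ is bounded and measurable while $A_n$ increases to a set of full $f^+$-measure, dominated convergence gives $\int\xi\,\dd\gamma_n\to\int\xi(x,\bbT(x))\,\dd f^+=\int\xi\,\dd\big((Id,\bbT)_\#f^+\big)$. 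Comparing with $\gamma_n\rightharpoonup\gamma$ forces $\gamma=(Id,\bbT)_\#f^+$, which is therefore optimal; hence $\bbT$ is an optimal transport map from $f^+$ to $f^-$. The main obstacle is the very first reduction: one cannot simply excise the arcs near the (possibly infinite) singular set, since an arbitrary cut destroys the total-variation matching between a $+$ arc and its image, and thus the transport structure. It is precisely the structured re-partition of Lemma \ref{l-repa}, whose arcs are truncated along segments issued by $\bbT$, that preserves both the balance of each arc-pair and the identity $\bbT_n=\bbT$, making the clean arc-by-arc truncation and the subsequent weak limit possible.
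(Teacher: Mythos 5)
Your proposal is correct and follows essentially the same route as the paper's proof: first invoke Lemma \ref{l-repa} to reduce to smooth arcs (so that the relevant singular set is empty), then truncate to finitely many arc-pairs to build data $f_n$ with $f_n\rightharpoonup f$, apply Proposition \ref{NEW Prop} to get $\gamma_n=(Id,\bbT)_{\#}f_n^+$ optimal, and pass to the limit via Lemma \ref{l-conv} to identify $\gamma=(Id,\bbT)_{\#}f^+$. The only differences are cosmetic: the paper selects the finite subfamily by the length threshold $\mathcal{H}^1\geq \frac1n$ (turning the discarded arcs into flat parts) rather than by enumeration, and it passes to the limit by weak convergence directly, whereas you use total-variation convergence of the tails; both variants are sound.
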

\begin{proof}
In the present case, we have a priori an infinite number of arcs $\chi_i^\pm$ and $\Gamma_i^\pm$ with an infinite number of singular points. However, due to Lemma \ref{l-repa}, we may assume after a repartition of $\partial \Omega$, that all the arcs $\chi_i^\pm$ and $\Gamma_i^\pm$ are smooth.  
We then proceed again by approximation, but this time we approximate
the trace functions $g$ by
$g_n$ defined on 
$\partial\Omega$ and 
such that $g_n$ has a finite number of smooth arcs $\chi_{i,n}^\pm$ and $\Gamma_{i,n}^\pm$.
%
%
Namely, for a fixed $n \in \mathbb{N}^\star$, we set
$$
I^n_\chi = \bigg\{i\in I_\chi: \mathcal{H}^1(\chi_i)\geq \frac{1}{n}\bigg\},\qquad I^n_\Gamma = \bigg\{i\in I_\Gamma: \mathcal{H}^1(\Gamma_i^+ \cup \Gamma_i^-)\geq \frac{1}{n}\bigg\}
$$
and
$$
\chi_{i,n}=\chi_i \,\,\,(\mbox{for all}\,\,\,i\in I^n_\chi),\,\,\,\,\qquad \Gamma_{i,n}^\pm=\Gamma_i^\pm\,\,\,\,(\mbox{for all}\,\,\,i\in I^n_\Gamma),$$
$$F_{i,n}=\chi_j\,\,(\mbox{for some}\,\,\,j\in 
I_\chi\setminus I^n_\chi),\,\,\ F_{i,n}=\Gamma_j\,\,\,(\mbox{for some}\,\,\,j\in I_\Gamma \setminus I^n_\Gamma)\,\,\,\mbox{or}\,\,\,F_{i,n}=F_j\,\,(\mbox{for some}\,\,\,j \in I_F).
$$
Now, we define  measure $f_n$ on $\partial\Omega$ as follows:
$$
f_n:= \sum_{i\in I^n_\chi}{f}\LC{\chi_i}+
 \sum_{i\in I^n_\Gamma} {f}\LC\Gamma_{i}.
$$

We shall   see that $f_n \rightharpoonup f$. Indeed, for any continuous function $\varphi$ on $\partial\Omega$, we have that\\
 $$\lb f_n,\varphi\rb=
\sum_{i\in I^n_\chi}
\lb {f \LC \chi_i},\varphi\rb +
\sum_{i\in I^n_\Gamma}
\lb {f\LC \Gamma_{i}},\varphi\rb  \,\longrightarrow \sum_{i\in I_\chi} \lb {f\LC\chi_i},\varphi\rb +
\sum_{i\in I_\Gamma}
\lb {f\LC\Gamma_{i}},\varphi\rb =\lb f,\varphi\rb .$$
We note that $f_n(\partial\Omega)=0$, for all $n \in \mathbb{N}^\star$. Fix $x_0 \in \partial\Omega$, so we define the trace function $g_n$ on $\partial\Omega$ as follows:
$$g_n(x):=f_n(\arc{x_0\,x})$$
where $\arc{x_0\,x}$ is the arc from $x_0$ to $x$ going in the positive orientation. So, we clearly have $g_n \in C(\partial\Omega) \cap  BV(\partial\Omega)$ with $\partial_\tau g_n=f_n$. Let ${\bbT_n}$ be the transport map from $f_n^+$ to $f_n^-$ defined as in \eqref{df-T}. It is convenient to extend it 
to $\Gamma^+ \cup \chi^+$ by setting ${\bbT_n}(x)=x$, for every $x \in (\bigcup_{i\in I_\chi\setminus I^n_\chi}\chi^+_i)\cup(\bigcup_{i\in I_\Gamma\setminus I^n_\Gamma}\Gamma^+_i)$.
It is obvious that ${\bbT_n}(x) = {\bf{T}}(x)$, for every $x \in \Gamma_n^+ \cup \chi_n^+$.
It is also easy to see that the assumption (H3) is satisfied by $g_n$, because for every $i \in I^n_\chi$ (resp. $i \in I^n_\Gamma$), we have $\chi_{i,n}=\chi_i$ (resp. $\Gamma_{i,n}=\Gamma_i$). In particular, Proposition \ref{NEW Prop} yields that ${\bf T}_n$ is an optimal transport map from $f_n^+$ to $f_n^-$.  

Let $\gamma_n=(Id,{\bf T}_n)_{\#}f_n^+$ be the optimal transport plan between $f_n^+$ and $f_n^-$. Now, we invoke Lemma \ref{l-conv} to deduce that
up to a subsequence, $\gamma_n \rightharpoonup \gamma$, where $\gamma \in \mathcal{M}^+(\overline{\Omega} \times\overline{\Omega})$ is an optimal transportation plan between $f^+$ and $f^-$. We notice that for any $\xi \in C(\overline{\Omega}\times \overline{\Omega})$, we have
$$\int_{\overline{\Omega}\times \overline{\Omega}} \xi(x,y)\,\mathrm{d}\gamma_n(x,y)=\int_{\overline{\Omega}} \xi(x,{\bf{T}}_n(x))\,\mathrm{d}f_n^+(x)=\int_{\overline{\Omega}} \xi(x,{\bf{T}}(x))\,\mathrm{d}f_n^+(x)\rightarrow \int_{\overline{\Omega}} \xi(x,{\bf{T}}(x))\,\mathrm{d}f^+(x).$$

Hence, $\gamma=(Id,{\bf T})_{\#}f^+$. $\qedhere$
\end{proof}
\begin{remark}
We have seen that the projection map $P$ is easy to visualize geometrically, but the 
set $\mathcal{S}$ leads to technical difficulties when we deal with $P$. However, we could introduce a more complicated map that does not take into account the presence of singular points on $\partial\Omega$ and, we will do so later in the proof of Proposition \ref{General case}. 

\end{remark}
From Proposition \ref{Prop. convex case}, we know that $\gamma=(Id,{\bf{T}})_{\#}f^+$ is an optimal transport plan  between $f^+$ and $f^-$. In fact, one can show that this is a unique optimal transport plan in \eqref{Kantorovich}. More precisely, we have the following: 
\begin{proposition}  \label{Uniqueness in the convex case} 
Under the assumptions that \,$\Omega$ is convex and (H1), (H2), (H3)  and (S) hold,
Problem \eqref{Kantorovich} has a unique optimal transport plan \,$\gamma=(Id,{\bf{T}})_{\#}f^+$.
\end{proposition}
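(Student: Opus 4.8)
The plan is to fix once and for all a Kantorovich potential $\phi$ (normalised by $\phi(x_0)=0$) together with the optimal plan $\gamma=(Id,\bbT)_{\#}f^+$ produced in Proposition \ref{Prop. convex case}, and to show that an \emph{arbitrary} optimal plan $\gamma'$ of \eqref{Kantorovich} is concentrated on the graph of $\bbT$. Since every admissible competitor satisfies $(\Pi_x)_{\#}\gamma'=f^+$, this immediately forces $\gamma'=(Id,\bbT)_{\#}f^+=\gamma$, which is the assertion.

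First I would observe that \emph{every} optimal plan is carried by the transport rays of the single fixed potential $\phi$. Indeed, using $\eqref{Kantorovich}=\eqref{dual}$ and the admissibility of $\gamma'$, one has
\begin{equation*}
\int_{\overline{\Omega}\times\overline{\Omega}}|x-y|\,\mathrm{d}\gamma'=\int_{\overline{\Omega}}\phi\,\mathrm{d}(f^+-f^-)=\int_{\overline{\Omega}\times\overline{\Omega}}\big(\phi(x)-\phi(y)\big)\,\mathrm{d}\gamma'.
\end{equation*}
As $\phi\in\Lip_1(\overline{\Omega})$ gives $\phi(x)-\phi(y)\le|x-y|$ pointwise, equality of the two integrals yields $\phi(x)-\phi(y)=|x-y|$ for $\gamma'$-a.e. $(x,y)$, i.e. $[x,y]$ lies on a transport ray of $\phi$, exactly as in \eqref{transport ray}.

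Next I would exploit the negligibility of the multiple points. By Lemma \ref{l-neg-N} the set $\mathcal{N}$ is at most countable, and since $g$ is continuous $f^+$ is nonatomic, so $f^+(\mathcal{N})=0$; because $(\Pi_x)_{\#}\gamma'=f^+$, for $\gamma'$-a.e. $(x,y)$ the point $x$ lies in $\spt f^+\setminus\mathcal{N}$ and is therefore an endpoint of a \emph{single} transport ray. On the other hand $(x,\bbT(x))\in\spt\gamma$ for $f^+$-a.e. $x$, so $[x,\bbT(x)]$ already sits on a transport ray having $x$ as an endpoint. Condition (H2) gives $]x,\bbT(x)[\subset\Omega$, so $[x,\bbT(x)]$ is a genuine chord of the convex body $\overline{\Omega}$; invoking the convexity fact that the open segment joining an interior point of $\overline{\Omega}$ to any point of $\overline{\Omega}$ stays in $\Omega$, one checks that $[x,\bbT(x)]$ cannot be prolonged inside $\overline{\Omega}$ past either endpoint, hence it coincides with the maximal transport ray through $x$ and meets $\partial\Omega$ only at $\{x,\bbT(x)\}$. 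Consequently, if $(x,y)\in\spt\gamma'$ then $y$ lies on this ray, while $y\in\spt f^-\subset\partial\Omega$ and the mutual singularity $f^+\perp f^-$ excludes $y=x$; therefore $y=\bbT(x)$ for $\gamma'$-a.e. $(x,y)$, which gives $\gamma'=(Id,\bbT)_{\#}f^+=\gamma$.

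The main obstacle is the geometric step identifying the destination point: one must rule out both that $x$ is an interior point of a longer transport ray and that the ray meets $\partial\Omega$ at a third point, and these are excluded precisely by (H2) together with the convexity of $\Omega$. Everything else is a routine consequence of the duality relation and the countability of $\mathcal{N}$ already established.
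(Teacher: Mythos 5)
Your proof is correct and follows essentially the same route as the paper: both arguments rest on the facts that $(Id,\bbT)_{\#}f^+$ is optimal with transport rays $]x,\bbT(x)[\subset\Omega$ (Proposition \ref{Prop. convex case} and (H2)), that transport rays cannot cross at interior points, and that any point carrying mass of a deviating plan would have to be a multiple point, while $\mathcal{N}$ is countable and $f^+$ nonatomic (Lemma \ref{l-neg-N}). The only difference is presentational: the paper argues by contradiction via the set $A$ of deviating points, whereas you show directly that any optimal $\gamma'$ is concentrated on the graph of $\bbT$, spelling out the duality step and the maximality of the chord $[x,\bbT(x)]$ that the paper leaves implicit.
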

\begin{proof}
Let \,$\gamma$\, be an optimal transport plan in Problem \eqref{Kantorovich} and let us assume that $\gamma \neq (Id,{\bf{T}})_{\#}f^+$, i.e. the set
$$
A = \bigg\{ x\in \partial \Omega:\, \exists\,\, y\in \partial \Omega,\ y\neq \bbT(x),\, (x,y) \in \spt(\gamma)\bigg\}
$$
has a positive measure, i.e., $f^+(A)>0$.
At the same time,
by Proposition \ref{Prop. convex case}, $]x,{\bf{T}}(x)[ \subset \Omega$ is a transport ray for $f^+-$a.e. $x$.  
We know that two transport rays cannot intersect at an interior point of one of them. As a result, 
we get that $A \subset \mathcal{N}$. Yet, thanks to the convexity of $\Omega$ (see Lemma \ref{l-neg-N}), $\mathcal{N}$ is at most countable. 
Hence, this yields that $f^+(A)=0$, which is a contradiction. 
We conclude 
that $\gamma=(Id,{\bf{T}})_{\#}f^+$
is the unique optimal transport plan in Problem \eqref{Kantorovich}.
\end{proof}
In the sequel, we set $\gamma:=(Id,{\bf{T}})_{\#}f^+$.
Thanks to the convexity of $\Omega$, the vector measure $v_\gamma$ given by \eqref{flow} (resp. the transport density $\sigma_\gamma$ \eqref{Transport density definition}) is well defined. Moreover, due to condition (H2), one has
\begin{equation}\label{r-pr3.15}
|v_\gamma|(\partial\Omega)=\sigma_\gamma(\partial\Omega)=\int_{\partial\Omega }\mathcal{H}^1([x,{\bf{T}}(x)] \cap\partial\Omega)\,\mathrm{d}f^+(x)=0.
\end{equation}

Hence, we get the following:
\begin{proposition} \label{existence & uniqueness of an optimal flow 0}
Assume that \,$\Omega$ is convex and (H1), (H2), (H3) $\&$ (S) hold. 
Then, 
$v_\gamma$ is the unique solution for Problem \eqref{Beckmann}. 
\end{proposition}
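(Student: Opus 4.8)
The plan is to assemble the proposition from results already in hand: the equivalence $\eqref{Beckmann}=\eqref{Kantorovich}$ valid for convex $\Omega$, the uniqueness of the optimal transport plan from Proposition \ref{Uniqueness in the convex case}, and the structural theorem \cite[Theorem 4.13]{Santambrogio} identifying optimal Beckmann fields with vector fields induced by optimal plans. No genuinely new estimate is needed.

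First I would verify that $v_\gamma$ is itself a minimizer of \eqref{Beckmann}. Since $\Omega$ is convex, the vector measure $v_\gamma$ defined by \eqref{flow} is well defined and admissible, and as recalled after \eqref{Transport density definition} one has $\int_{\overline{\Omega}} |v_\gamma| = \int_{\overline{\Omega}\times\overline{\Omega}} |x-y|\,\mathrm{d}\gamma = \eqref{Kantorovich}$. Because $\Omega$ is convex, $\eqref{Beckmann}=\eqref{Kantorovich}$ (see \cite[Chapter 4]{Santambrogio}), hence $\int_{\overline{\Omega}} |v_\gamma| = \eqref{Beckmann}$ and $v_\gamma$ solves the Beckmann problem. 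The additional property $|v_\gamma|(\partial\Omega)=0$ has already been recorded in \eqref{r-pr3.15}, which is precisely what makes $v_\gamma$ usable in Proposition \ref{t-equiv} to produce a solution of \eqref{LGP}.

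For uniqueness, I would let $v$ be any minimizer of \eqref{Beckmann}. By \cite[Theorem 4.13]{Santambrogio}, every optimal vector field is of the form $v=v_{\gamma'}$ for some optimal transport plan $\gamma'$ of \eqref{Kantorovich}. But Proposition \ref{Uniqueness in the convex case} asserts that the optimal plan is unique and equals $\gamma=(Id,{\bf{T}})_{\#}f^+$; therefore $\gamma'=\gamma$ and consequently $v=v_\gamma$. This establishes both existence and uniqueness.

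I do not anticipate a real obstacle here: the substantive work (the identification of $\gamma$ with $(Id,{\bf{T}})_{\#}f^+$ and its uniqueness) was carried out in Propositions \ref{Prop. convex case} and \ref{Uniqueness in the convex case}, so the present statement is a short deduction. The only point requiring minimal care is to invoke the convexity of $\Omega$ twice, once for the well-posedness of $v_\gamma$ through \eqref{flow}--\eqref{Transport density definition} and once for the equality $\eqref{Beckmann}=\eqref{Kantorovich}$, since neither holds for general non-convex domains.
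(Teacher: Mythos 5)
Your proposal is correct and follows essentially the same route as the paper's proof: convexity of $\Omega$ gives $\eqref{Beckmann}=\eqref{Kantorovich}$ and the optimality of $v_\gamma$, while uniqueness follows by representing any minimizer as $v_{\gamma'}$ via \cite[Theorem 4.13]{Santambrogio} and invoking the uniqueness of the optimal plan from Proposition \ref{Uniqueness in the convex case}. The remark about $|v_\gamma|(\partial\Omega)=0$ is not needed here (it is only used later, in Theorem \ref{main theorem convex case}), but including it does no harm.
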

\begin{proof}
 Thanks to \cite[Chapter 4]{Santambrogio} and the fact that $\Omega$ is assumed to be convex, we have that \,$\eqref{Beckmann}= \eqref{Kantorovich}$ 
and,
$v_\gamma$ is a minimizer for Problem \eqref{Beckmann}. Moreover, we recall that
if \,$v$\, is another minimizer in \eqref{Beckmann}, then there will be an optimal transport plan $\gamma^\prime$ in \eqref{Kantorovich} such that $v=v_{\gamma^\prime}$ (we refer the reader to \cite[Theorem 4.13]{Santambrogio} for more details). Yet, by Proposition \ref{Uniqueness in the convex case}, we know that the optimal transport plan $\gamma$ is unique. Hence, Problem \eqref{Beckmann} has a unique solution as well. 
\end{proof}
Finally, we are ready to state the first of our main theorems in this section:
\begin{theorem} \label{main theorem convex case}
Assume that \,$\Omega$ is convex, $g \in BV(\partial\Omega) \cap C(\partial\Omega)$ and conditions (H1), (H2), (H3), and (S) are satisfied. Then, the
least gradient problem \eqref{LGP} has a unique solution.
\end{theorem}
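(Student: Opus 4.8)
The plan is to harvest the results established throughout this section and transport them back across the equivalence of Proposition \ref{t-equiv}. Under the standing hypotheses all of the genuine work has already been done, so the proof should amount to a short synthesis: the delicate optimal transport analysis is packaged in Propositions \ref{Prop. convex case} and \ref{Uniqueness in the convex case}, the passage to the Beckmann problem in Proposition \ref{existence & uniqueness of an optimal flow 0}, and the passage from Beckmann to least gradient in Proposition \ref{t-equiv}.

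First I would invoke Proposition \ref{existence & uniqueness of an optimal flow 0}: since $\Omega$ is convex and (H1), (H2), (H3), (S) hold, the vector measure $v_\gamma$ associated via \eqref{flow} with the (unique, by Proposition \ref{Uniqueness in the convex case}) optimal plan $\gamma=(Id,{\bf{T}})_{\#}f^+$ is the unique minimizer of \eqref{Beckmann}. The crucial point for producing a bona fide solution of \eqref{LGP}, rather than a merely relaxed one, is that this minimizer charges no mass on $\partial\Omega$. This is exactly the identity \eqref{r-pr3.15}, which records that condition (H2) forces each open segment $]x,{\bf{T}}(x)[$ to lie inside $\Omega$, so that $\mathcal{H}^1([x,{\bf{T}}(x)]\cap\partial\Omega)=0$ for $f^+$-a.e. $x$, and hence $|v_\gamma|(\partial\Omega)=\sigma_\gamma(\partial\Omega)=0$. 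With a minimizer of \eqref{Beckmann} satisfying $|v_\gamma|(\partial\Omega)=0$ in hand, I would then apply part (3) of Proposition \ref{t-equiv} to produce $u\in BV(\Omega)$ solving \eqref{LGP} with $v_\gamma=R_{\frac\pi2}Du$. This settles existence.

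For uniqueness I would argue through the equivalence once more. Suppose $u_1,u_2$ both solve \eqref{LGP}. By part (2) of Proposition \ref{t-equiv}, the rotated measures $R_{\frac\pi2}Du_1$ and $R_{\frac\pi2}Du_2$ are both minimizers of \eqref{Beckmann}; since that minimizer is unique by Proposition \ref{existence & uniqueness of an optimal flow 0}, we get $R_{\frac\pi2}Du_1=v_\gamma=R_{\frac\pi2}Du_2$. As $R_{\frac\pi2}$ is an invertible rotation of the plane, this yields $Du_1=Du_2$ as measures, so $u_1-u_2$ has vanishing distributional gradient on the connected (indeed simply connected) domain $\Omega$ and is therefore constant. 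The trace condition $u_i|_{\partial\Omega}=g$ forces this constant to be zero, whence $u_1=u_2$.

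I do not expect a genuine obstacle here, precisely because the theorem is a corollary of the section's propositions; the only substantive input beyond bookkeeping is the zero-boundary-mass identity \eqref{r-pr3.15}, which is the sole reason hypothesis (H2) is imposed. If anything merits care, it is making sure the trace-preserving step is invoked correctly when fixing the additive constant in the uniqueness argument, since the whole interest of the problem lies in the boundary datum being attained in the trace sense.
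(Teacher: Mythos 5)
Your proposal is correct and follows essentially the same route as the paper: the paper's proof is exactly the combination of Proposition \ref{t-equiv}, Proposition \ref{existence & uniqueness of an optimal flow 0}, and the zero-boundary-mass identity \eqref{r-pr3.15}. The only difference is that you spell out the uniqueness step (rotating both solutions to Beckmann minimizers, invoking uniqueness there, and fixing the additive constant via the trace), which the paper leaves implicit in the phrase ``follows immediately''; this added detail is accurate and harmless.
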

\begin{proof}
Due to Proposition \ref{t-equiv}, Problems \eqref{LGP} and \eqref{Beckmann} are  equivalent. As a result, our claim follows immediately from Proposition \ref{existence & uniqueness of an optimal flow 0} and the fact that $|v_\gamma|(\partial\Omega)=0$, see (\ref{r-pr3.15}).  
\end{proof}
Now, we present an example showing that violation of (H3) might lead to the non-existence of solutions to Problem \eqref{LGP}.
\begin{example} \label{Example1}\rm 
Let \,$\Omega:=(0,1)^2$ be the unit square and define a function $g\in BV(\partial\Omega) \cap C(\partial\Omega)$ as follows
$$g(x_1,x_2):=\begin{cases}
\min\{x_1,\,\delta,\,1-x_1\} & \,\,\mbox{if} \,\,\,\,\,\,\, x_1 
\in[0,1], 
\,\,\,x_2=0\,\,\,\,\mbox{or}\,\,\,\,x_2=1,\\
\min\{x_2,\,\delta,\,1-x_2\} & \,\,\mbox{if} \,\,\,\,\,\,\, x_2\in[0,1], 
\,\,\,x_1=0\,\,\,\,\mbox{or}\,\,\,\,x_1=1,\\
\end{cases}$$
where 
$$\delta \in \bigg]\frac{1}{2+\sqrt{2}},\frac{1}{2}\bigg[.$$
\begin{figure}[h]
\begin{tikzpicture}
  \draw (0, 0) rectangle (4, 4);
  \node at (1, -.3) {$\delta$};
  \node at (2.5, -.3) {$1-\delta$};
    \draw[red,line width=2pt] (1,0)--(0,1);
    \draw[red,line width=2pt] (3,0)--(4,1);
    \draw[red,line width=2pt] (0,3)--(1,4);
    \draw[red,line width=2pt] (4,3)--(3,4);
\node at (0.4, -.3) {$\chi_1^+$};
\node at (3.5, -.3) {$\chi_2^-$};
\node at (4.4, 3.6) {$\chi_3^-$};
\node at (4.4, 0.5) {$\chi_2^+$};
\node at (0.4, 4.3) {$\chi_4^-$};
\node at (3.5, 4.3) {$\chi_3^+$};
\node at (-0.4, 3.6) {$\chi_4^+$};
\node at (-0.4, 0.5) {$\chi_1^-$};

      
\end{tikzpicture}
\caption{}
\label{Fig. 2}
\end{figure}

We notice that in this case, the boundary of \,$\Omega$ can be divided into arcs $\chi_i^{\pm}$ with $i=1,...,4$ (as shown in Figure \ref{Fig. 1}):
\begin{align*}
& \chi_1^+:=\{(x_1,0):0 \leq x_1\leq \delta\},& \chi_1^- :=\{(0,x_2):0\leq x_2\leq \delta\},   \\
& \chi_2^+:=\{(1,x_2): 0 \leq x_2\leq \delta\},&
\chi_2^-:=\{(x_1,0):1-\delta \leq x_1\leq 1\},\\
&\chi_3^+:=\{(x_1,1):1-\delta \leq x_1\leq 1\},&
\chi_3^-:=\{(1,x_2):1-\delta \leq x_2\leq 1\}, \\&
\chi_4^+:=\{(0,x_2):1-\delta \leq x_2\leq 1\}, &
\chi_4^-:=\{(x_1,1):0 \leq x_1\leq \delta\}.
\end{align*}
{Due to the definition of $g$}, the measures $f^+$, $f^-$ are given by
$f^+= \cH^1 \LC (\chi_1^+ \cup \chi_2^+ \cup \chi_3^+ \cup \chi_4^+)$ and $f^-=\cH^1 \LC(\chi_1^- \cup \chi_2^-\cup \chi_3^- \cup \chi_4^-)$.

Now, we claim that there are transport rays between \,$f^+$ and \,$f^-$ which are contained in the boundary of $\Omega$. Indeed, if this was not the case then it would not be difficult to see that
the line segments 
$[(\delta,0),(0,\delta)]$, $[(1-\delta,0),(1,\delta)]$, $[(1,1-\delta),(1-\delta,1)]$ and \,$[(\delta,1),(0,1-\delta)]$ must be transport rays. Hence, we should have the following inequality:
\begin{align*}
    |(\delta,0)-(0,\delta)|&+|(1-\delta,0)-(1,\delta)|+|(1,1-\delta)-(1-\delta,1)|+|(\delta,1)-(0,1-\delta)|\\
&\leq |(\delta,0)-(1-\delta,0)|+|(1,\delta)-(1,1-\delta)|+|(1-\delta,1)-(\delta,1)|+|(0,1-\delta)-(0,\delta)|,
\end{align*}
which is not possible since $\delta>\frac{1}{2+\sqrt{2}}$. In particular, the assumption (H3) is not satisfied and, a solution $u$ does not exist. On the other hand, we see that (H3) is satisfied as soon as $\delta \leq \frac{1}{2+\sqrt{2}}$. Thus, 
we infer that Problem \eqref{LGP} has a unique solution, provided that this inequality holds.

\end{example}

From Theorem \ref{main theorem convex case}, we have already seen that if (H1), (H2), (H3) and {(S)} are satisfied, then Problem \eqref{LGP} attains a minimum. In other words, these conditions are sufficient to get existence of a solution to Problem \eqref{LGP}.

In the
following example the data violates
condition (H1) but  \eqref{LGP} has
a solution, hence (H1) is not necessary for existence.  

\begin{example}\label{Ex:Cantor}\rm 
 Let $\Omega=[0,1]^2$ and $C:[0,1] \mapsto [0,1]$ be the Cantor function. Then, we define the boundary datum $g$ on $\partial\Omega$ as follows:
 $$g(x_1,x_2)=\begin{cases}
   C(x_1) & \qquad\mbox{if}\qquad\,0 \leq x_1 \leq 1\,\,\,\,\mbox{and}\,\,\,\,x_2=0\,\,\,\,\mbox{or}\,\,\,\,x_2=1,\\  
   0 & \qquad\mbox{if}\qquad\,0 \leq x_2 \leq 1\,\,\,\,\mbox{and}\,\,\,\,x_1=0,\\
   1 & \qquad\mbox{if}\qquad\,0 \leq x_2 \leq 1\,\,\,\,\mbox{and}\,\,\,\,x_1=1.
 \end{cases}$$    
 With this choice of \,$g$, it is clear that the least gradient problem \eqref{LGP} admits a solution $u(x_1,x_2)=C(x_1)$. Indeed, let us define the transport map
 $$T(x_1,0)=(x_1,1).$$
 The Kantorovitch potential is given by $\phi(x_1,x_2)=-x_2,$
 because
$$
1=\int_\Omega \phi \,\mathrm{d}(f^+-f^-)\le \sup_{\Lip(\psi)\le 1}\int_\Omega \psi \,\mathrm{d}(f^+-f^-)=
\int_0^1  (\psi(x_1,0) - \psi(x_1,1)) \, \mathrm{d}f^+
\le 
1.
$$
Moreover,
$$
\int_\Omega \phi\, \mathrm{d}(f^+-f^-) =\int_\Omega |x-T(x)|\,\mathrm{d} f^+.
$$
\\
Notice that for such a function $g$, (H1) is not satisfied.
\end{example}

In the example above, sets $I_\chi$ and $I_\Gamma$ are empty; but $g$ is monotone on each side of the square and $\partial_\tau g$ is a singular measure with $\int_{\partial \Omega} |\partial_\tau g| =2$. These observations suggest the following definition.
\begin{definition}\label{d-ps-mono}

 We say that a function $g\in W^{1,1}(\partial\Omega)$ is {\it piecewise monotone} provided  that $\partial\Omega$ may be decomposed into disjoint sets such that $\partial \Omega = U^+ \cup U^- \cup U_0$, where $U^\pm$ are open and $\partial_\tau g>0$ a.e. on $U^+,$ $\partial_\tau g<0$ a.e. on $U^-$ and $\int_{U_0} |\partial_\tau g| \, ds =0.$
\end{definition}
If we keep this definition in mind, then one can show that (H1), (H2), and (H3) are all at the same time necessary conditions 
for the solvability of the least gradient problem for piecewise monotone data. This is the content of the second of our main theorems:
\begin{theorem}\label{th-necessity}
Let \,$\Omega$ be a convex domain and \,$g$ in $W^{1,1}(\partial\Omega)$ be piecewise monotone. Assume that Problem \eqref{LGP} has a solution $u$. Then, the boundary datum $g$ must satisfy  conditions (H1), (H2) and (H3).
\end{theorem}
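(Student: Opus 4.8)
The plan is to reverse the chain of equivalences of Section~\ref{sec:Prelim}. Assuming \eqref{LGP} has a solution $u$, Proposition~\ref{t-equiv} gives that $v=R_{\frac\pi2}Du$ solves \eqref{Beckmann} with $|v|(\partial\Omega)=0$; since $\Omega$ is convex, \eqref{Beckmann}$=$\eqref{Kantorovich} and $v=v_\gamma$ for an optimal plan $\gamma$ whose transport density obeys $\sigma_\gamma(\partial\Omega)=|v_\gamma|(\partial\Omega)=0$. By \eqref{Transport density definition} this forces the open ray $]x,y[$ to lie in $\Omega$ for $\gamma$-a.e.\ $(x,y)$, i.e.\ no ray glides along $\partial\Omega$. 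As $g\in W^{1,1}(\partial\Omega)$, the measure $f=\partial_\tau g$ is absolutely continuous with respect to $\mathcal H^1$, so $f^+$ is nonatomic and, by Lemma~\ref{l-neg-N}, the set $\mathcal N$ of multiple points is countable and $f^+$-null. Since no ray glides, every $x\notin\mathcal N$ is the endpoint of a single ray ending at one boundary point, so there is a Borel transport map $R$ with $\gamma=(Id,R)_\#f^+$; the impossibility of two rays meeting at an interior point, together with convexity, makes $R$ orientation-reversing and continuous on arcs off $\mathcal N$, exactly as in Lemmas~\ref{l-cR}--\ref{l-2.7}.

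Next I would extract the arcs of (H1) from $R$. Piecewise monotonicity gives $\partial\Omega=U^+\cup U^-\cup U_0$ with $f^+$ carried by $\overline{U^+}$ and $f^-$ by $\overline{U^-}$; write $U^+=\bigsqcup_k A_k^+$ and $U^-=\bigsqcup_\ell A_\ell^-$ as the countable families of maximal open arcs of strict monotonicity. For each pair of indices put $\Gamma_{k,\ell}^+:=A_k^+\cap R^{-1}(A_\ell^-)$ and $\Gamma_{k,\ell}^-:=R(\Gamma_{k,\ell}^+)=A_\ell^-\cap R(A_k^+)$; monotonicity of $R$ makes these (at most countably many) arcs in one-to-one correspondence, with $g$ strictly increasing on $\Gamma_{k,\ell}^+$ and strictly decreasing on $\Gamma_{k,\ell}^-$. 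The identity $R_\#f^+=f^-$ gives the balance $TV(g|_{\Gamma_{k,\ell}^+})=f^+(\Gamma_{k,\ell}^+)=f^-(\Gamma_{k,\ell}^-)=TV(g|_{\Gamma_{k,\ell}^-})$. I would classify a pair as a $\Gamma$-arc when $\overline{\Gamma_{k,\ell}^+}\cap\overline{\Gamma_{k,\ell}^-}=\emptyset$ and as a $\chi$-arc when the two closures meet at exactly one point $c$ (necessarily a local extremum of $g$), take the maximal components of $U_0$ as the flat parts, and note that the leftover set ($\mathcal N$, arc endpoints, $\partial U_0$) is $|f|$-null. On each pair both $R$ and the map $\bbT$ of \eqref{df-T} are orientation-reversing transports of $|f|\LC\Gamma_{k,\ell}^+$ onto $|f|\LC\Gamma_{k,\ell}^-$; as the monotone rearrangement of a nonatomic measure is unique, $\bbT=R$ holds $f^+$-a.e.

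With $\bbT=R$, condition (H2) is almost immediate: the open segment $]x^+,\bbT(x^+)[$ lies in $\Omega$ for $f^+$-a.e.\ $x^+$, and this upgrades to every $x^+$ because $\bbT$ is continuous on each arc (Lemma~\ref{cont-T}) while, by convexity, a single interior contact of a limiting segment with $\partial\Omega$ would force a whole subsegment onto $\partial\Omega$ and hence gliding on a set of positive $f^+$-mass, contradicting $\sigma_\gamma(\partial\Omega)=0$. Condition (H3) comes from optimality: $\gamma=(Id,\bbT)_\#f^+$ is cyclically monotone, which is precisely \eqref{rH3} with `$\le$'; equality for some cycle would place the relevant points in degenerate (collinear) position, making two rays lie on a common line and therefore on a flat part of $\partial\Omega$, again contradicting $\sigma_\gamma(\partial\Omega)=0$, so the inequality is strict.

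The delicate point, where I expect the real work, is clause (4) of (H1): the convex hulls $T_i,D_i$ must be pairwise disjoint. The pairing above is only laminar—distinct bundles of non-crossing rays may be nested—so the hulls of the coarse arcs $\Gamma_{k,\ell}^\pm$ can overlap when the arcs are long. The remedy is to refine each pair into sub-pairs $(\Gamma'^+,R(\Gamma'^+))$ of small arc-length: the convex hull of a short arc and its short image is a thin sliver hugging the corresponding bundle of chords, and because the chords of different bundles are disjoint segments, such slivers can be separated. The main obstacle is to perform this refinement uniformly when the components of $U^\pm$ accumulate, so that infinitely many nested bundles occur; one must choose the fineness (for instance ordering pairs by decreasing $f^+$-mass, and refining extra near accumulation points and near the shared vertices $c_i$ of the $\chi$-arcs) so that the countably many slivers remain pairwise disjoint. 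Once this is secured, together with (H1)(1)--(3) already arranged, all of (H1), (H2) and (H3) hold, which completes the proof.
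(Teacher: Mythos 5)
Your reduction to the Beckmann/Monge--Kantorovich side, the identification $\bbT=R$ via monotone rearrangement, and your treatments of (H2) and (H3) are sound and essentially parallel to the paper's (the paper also gets (H3) from the Kantorovich potential plus the fact that the cross-segments $[e_k^+,\bbT(e_{k+1}^+)]$ cannot be transport rays). Where you genuinely diverge is in how the (H1) decomposition is built: you work on the boundary, defining bundles $\Gamma_{k,\ell}^{+}=A_k^+\cap R^{-1}(A_\ell^-)$ from the optimal map, whereas the paper works in the interior, extracting the flat parts from the connected components $\Delta_i$ of the region where $u$ is locally constant, the $\chi$-arcs from strict local extrema, and the $\Gamma$-pairs as connected components of $\Omega\setminus\bigl(\overline{\bigcup_i D_i}\cup\overline{\bigcup_i\Delta_i}\bigr)$, whose boundaries inside $\Omega$ consist of transport rays. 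The interior route has a decisive advantage: clause (4) of (H1), the mutual disjointness of the hulls, holds by construction, since the $T_i$ are connected components of a set that is disjoint from all the $D_j$.

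That clause (4) is precisely your genuine gap. You never prove it: you propose a refinement into ``slivers'' and then concede that choosing the fineness so that countably many slivers stay pairwise disjoint, when the arcs of monotonicity accumulate, is an unresolved obstacle -- a proof cannot end with ``once this is secured''. Moreover, the difficulty you describe is illusory, so the refinement is unnecessary. First, within a bundle $R$ has no jumps: a jump at an interior point of $\Gamma_{k,\ell}^+$ would leave a nontrivial sub-arc of $A_\ell^-$ carrying zero $f^-$-mass, contradicting strict monotonicity of $g$ there; hence $R$ is a continuous order-reversing bijection of $\Gamma_{k,\ell}^+$ onto $\Gamma_{k,\ell}^-$. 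Consequently the chords $[x,R(x)]$, $x\in\Gamma_{k,\ell}^+$, are pairwise disjoint transport rays which, by an intermediate-value sweeping argument, fill exactly $\conv(\Gamma_{k,\ell}^+\cup\Gamma_{k,\ell}^-)$: this hull is the region bounded by the two arcs and the two extreme chords. Since transport rays belonging to distinct bundles cannot meet at interior points, distinct hulls are automatically disjoint; in the nested configuration you worry about, one bundle lies entirely on the far side of the other bundle's extreme chord, so no overlap occurs even for long arcs. Either supply this argument or switch to the paper's connected-component construction; as written, the necessity of (H1) is not established.
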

\begin{proof} {\it Step 1.}
First, we define $v:=R_{\frac{\pi}{2}} Du$. Due  to Proposition \ref{t-equiv}  the  vector field $v$ turns out to be a solution for problem \eqref{Beckmann}. At the same time, by \cite[Theorem 4.13]{Santambrogio}, there will be an optimal transport plan $\gamma$ for Problem \eqref{Kantorovich} such that $v=v_\gamma$. Since $u$ is a solution to Problem \eqref{LGP}, then no level set $\partial\{u \geq t\}$ of $u$
is contained in the boundary $\partial\Omega$. In optimal transport terms, this means that there are no transport rays between $f^+$ and $f^-$ which are contained in $\partial\Omega$ (we recall that $f^+$ and $f^-$ are the positive and negative parts of $f$, the tangential derivative of $g$). 

Let $\Delta$ be the interior of the set where $u$ is locally constant or equivalently, where the transport density $\sigma=|v|$ vanishes. This set has at most a countable number of disjoint connected components denoted by
$\Delta_i$, $i\in I_\Delta$, with positive Lebesgue measures. We shall introduce $I_F=\{i\in I_\Delta: \cH^1(\partial\Delta_i \cap\partial\Omega)>0\}$. Then, we define the flat parts as follows:
$$
F_i = \partial\Delta_i \cap\partial\Omega, \qquad i\in I_F.
$$
We shall see that for any $i$, the set $\Omega\cap \partial\Delta_i$ is composed of transport rays. Indeed, if $z \in  \Omega \cap\partial \Delta_i$ then there will be a sequence $z_n \in \Omega \setminus \overline{\Delta}$ such that $z_n \to z$. Since $z_n \notin \overline{\Delta}$ then there will be a transport ray $\cR_n=[x_n,y_n]$ such that $z_n \in \cR_n$. Yet, after extracting a subsequence (not relabeled), these transport rays $\cR_n$ converge to a line segment $\cR=[x,y]$, where $x_n \to x$ and $y_n \to y$. Moreover, 
$z \in \cR$  and due to Lemma \ref{l-conv} (2) applied to constant sequences $f^\pm_n = f^\pm$, we infer that $\cR$ 
is a transport ray. 
In particular, $\partial\Delta_i$ intersects $\partial\Omega$.
At the same time, we see that the set $\partial \Omega \setminus \overline{\bigcup_{i\in I_\Delta} \Delta_i}$ is a sum of open arcs, let us call it $\bigcup_{j\in J} \alpha_j$, where $\alpha_j\subset \partial\Omega$ {and $J$ is at most countable index set}.

{\it Step 2.} Let us suppose that $g$ has at least one strict local minimum or maximum. If this is not the case we will proceed to Step 3. We take $c_k$ a strict local minimum or maximum of $g$. Since $c_k\in \partial \Omega \setminus \overline{\bigcup_{i\in I_\Delta} \Delta_i}$, then there is $j\in J$ so that $c_k\in\alpha_j$. Moreover, there will be an {open} arc $\tilde\chi_k:=\arc{c_k^+ c_k^-}$ around $c_k$, where $g$ is strictly increasing on $\chi_k^+:=\arc{c_k\,c_k^+}$ and strictly decreasing on $\chi_k^-:=\arc{c_k\,c_k^-}$. {We notice that} $[c_k^+,c_k^-]$ is a transport ray. {This follows from the fact that any level set of a solution to \eqref{LGP} is a transportation ray.
Let us denote by $D_k\subset \Omega$ the convex hull of $\chi_k$, we notice that $\partial D_k\cap \Omega = [c_k^+,c^-_k]$.

{\it Step 3.}
We define an open set  $T:=\Omega \setminus\left( \overline{\bigcup_{i\in I_\chi} D_i} \cup \overline{\bigcup_{i\in I_\Delta}\Delta_i}\right)$. We claim that any open connected component of \,$T$\, is of the form $T_i$ (see condition (H1)), i.e. it is the convex hull of two open arcs   $\Gamma_i^+$ and $\Gamma_i^-$, where $g$ is strictly increasing on $\Gamma_i^+$ and strictly decreasing on $\Gamma_i^-$ with $TV(g_{|\Gamma_i^+})=TV(g_{|\Gamma_i^-})$ and $\dist(\Gamma_i^+,\Gamma_i^-)>0$. 

Let \,$C$\, be  an open connected component of \,$T$. We claim that $\partial C\cap \partial\Omega$ consists of a sum of two disjoint closed arcs. First, it is easy to see that $\partial C\cap \Omega$ consists of transportation rays, hence $C$ is convex.

Since $C \cap \overline{\Delta}=\emptyset$, then the interior relative to $\partial\Omega$ of any arc of $\partial C \cap \partial \Omega$ does not contain any multiple point, otherwise there will be an $i \in I_\Delta$ such that $\Delta_i$ divides $C$ into two parts but this is a contradiction because $C$ is connected. By the same argument, one can see that the interior of any arc of $\partial C \cap \partial \Omega$ 
does not also intersect the flat part $F$ of $g$. Moreover, it is clear that $g$ does not attain a strict local minimum/maximum in the interior of $\partial C \cap \partial \Omega$}. Hence, on any arc of $\partial C \cap \partial \Omega$, the boundary datum $g$ is either strictly increasing or strictly decreasing. 
Hence, any point $x^+\in \partial C \cap \partial \Omega$ is an endpoint of a transportation ray $[x^+, x^-]$, where $x^-\in \partial C \cap \partial\Omega$.

Since the set 
$U =\partial \Omega \setminus \left( \overline{\bigcup_{i\in I_\chi} D_i} \cup \overline{\bigcup_{i\in I_\Delta}\Delta_i}\right) $ is open it is a sum of open disjoint arcs. We set
$$
A_C =\{\alpha\subset U: \alpha \hbox{ is an open connected component and }\alpha\cap \partial C\neq \emptyset\}.$$
It is clear that if \,$\alpha\in A_C$ then $\overline{\alpha}\subset\partial C$. Now, we
claim that if $\alpha\neq\beta$ are both in $A_C$, then $\overline{\alpha}\cap\overline{\beta} =\emptyset.$ 
Let us suppose otherwise, i.e.  $ \overline{\alpha}\cap\overline{\beta}=\{p\}$. 
Hence, $g$ is strictly monotone on $\theta:=\alpha \cup \{p\}\cup\beta$, for otherwise we have an arc of type $\chi$. Then, we have two possibilities: (i) either $p$ is a multiple point, or (ii) $p$ is not a multiple point. 

In the first case, we see that there will be two transportation rays $\cR_1=[p, q_1]$ and $\cR_2=[p,q_2]$, where $q_1,\,q_2\in  \partial C \cap \partial\Omega$ and $q_1\neq q_2$. Yet, these rays $\cR_1$ and $\cR_2$ separate two components of $C$, which was assumed to be connected, a contradiction.

If (ii) holds, then there will be a unique transportation ray $\cR= [p,q]$ starting at $p$\, such that  
$q$ belongs to $\partial C\cap \partial\Omega$. 
Since $g$ is strictly monotone on $\theta$, 
this ray 
\,$]p,q[$ must be contained in $C$.
Suppose that $p^\alpha_n\in \alpha$ (resp. $p^\beta_n\in \beta$) is a sequence of points converging to $p$. We take the rays emanating from these points, $[p^\alpha_n, q^\alpha_n]$,  $[p^\beta_n, q^\beta_n]$. Moreover, by strict monotonicity of $g$, these rays are contained in $C$ and we have 
$$
\lim_{n\to \infty} q^\alpha_n = q = \lim_{n\to \infty} q^\beta_n,
$$
since otherwise we would reach a contradiction with the fact that $p$ is not a multiple point. Thus, 
there is a ball $B(p, r)$, which does not contain any point from $\overline{\bigcup_{i\in I_\chi} D_i} \cup \overline{\bigcup_{i\in I_\Delta}\Delta_i}$, but this yields again  a contradiction with the definition of $p$.
Hence, we conclude that $\overline{\alpha}\cap\overline{\beta} =\emptyset$.
Now, take any arc $\alpha_1 \subset \partial C \cap \partial \Omega$ and any point $x^+$ in the interior of $\alpha_1$. Assume that $[x^+,x^-]$ is a transport ray. {As we noted above $g$ is strictly monotone on $\alpha_1$ so that $x^-$ cannot belong to $\alpha_1$. Thus,}
$x^- \in \alpha_2 \subset \partial C \cap \partial\Omega$, where $\overline{\alpha_1} \cap \overline{\alpha_2}=\emptyset$. 
Then, all the other points in the interior of $\alpha_1$ must be transported to the same arc $\alpha_2$ because otherwise
there would be a multiple point inside $\alpha_1$, which is a contradiction as we already showed that there are no multiple points inside any arc of $\partial C \cap \partial \Omega$.
Consequently, $\partial C \cap \partial \Omega$ can be decomposed into two arcs $\Gamma^+$ and $\Gamma^-$, where $g$ is strictly increasing on $\Gamma^+$ and strictly decreasing on $\Gamma^-$ with $TV(g_{|\Gamma^+})=TV(g_{|\Gamma^-})$ and $\dist(\Gamma^+,\Gamma^-)>0$. 

{\it Step 4.} Set 
$$I_\Gamma=\{i\in I_\Delta:T_i\,\,\,\mbox{is a connected component of}\,\,T\}.
$$
Since 
sets $T_i$, $i\in I_\Gamma$, are disjoint and open, 
then 
set $I_\Gamma$ is at most countable. It is also clear that the sets $T_i$ ($i \in I_\Gamma$) and $D_j$ ($j \in I_\chi$) are mutually disjoint. Hence, the condition (H1) is  satisfied.

Moreover, we obviously have that for every $x^+ \in \chi_i^+$, $i \in I_\chi$ (resp. \,$x^+ \in \Gamma_i^+$, $i \in I_\Gamma$),
the line segment $[x^+,{\bf{T}}(x^+)]$ 
is a transport ray (see Lemma \ref{lyy}), which 
is contained in $\Omega$. Hence, (H2) holds as well.

Finally, (H3) is also satisfied. Consider any finite sequence of points $\{e_k^+\}_{1 \leq k \leq m}$ (where $m \in \mathbb{N}$) such that $e_k^+\in \chi_{i_k}^+ \cup \Gamma_{i_k}^+$, for some $i_k \in I_\chi \cup I_\Gamma$. Then,
we have $(e_k^+,{\bf{T}}(e_k^+)) \in \spt(\gamma)$, for every $1 \leq k \leq m$ and so, thanks to the equality \eqref{transport ray} which is satisfied by the Kantorovich potential $\phi$, we get the following identity:
$$
\sum_{k=1}^m |e_k^+ - {\bf{T}}(e_k^+)|=\sum_{k=1}^m \phi(e_k^+) - \phi({\bf{T}}(e_k^+))
\,\,=\,\, \sum_{k=1}^{m-1}
[\phi(e_k^+) - \phi({\bf{T}}(e_{k+1}^+))] + \phi(e_m^+) - \phi({\bf{T}}(e_{1}^+)).$$
Recalling the proof of Proposition \ref{Uniqueness in the convex case}, we see that $[e_k^+,{\bf{T}}(e_{k+1}^+)]$ is not a transport ray and then, we have $\phi(e_k^+) - \phi({\bf{T}}(e_{k+1}^+)) < |e_k^+ - {\bf{T}}(e_{k+1}^+)|$. Consequently, we get
$$ \sum_{k=1}^m |e_k^+ - {\bf{T}}(e_k^+)| <
\sum_{k=1}^{m-1}
|e_k^+ - {\bf{T}}(e_{k+1}^+)| + |e_m^+ - {\bf{T}}(e_{1}^+)|. $$

This concludes the proof.  $\qedhere$
\end{proof}

\section{Sufficient conditions for existence and uniqueness in the non-convex case}\label{sec:nonconvex}
In this section, we will extend the equivalence between Problems \eqref{Beckmann} and \eqref{Kantorovich} to the general case of a bounded, simply connected, not necessarily convex domain, $\Omega \subset \R^2$,
however,  some admissibility conditions on the Dirichlet datum $g$ are imposed. 
We stress that the assumption of the piecewise monotonicity of $g\in C(\partial\Omega)$ is a standing assumption.
Then, we will be able to show, under these conditions, the existence and uniqueness of a solution to the least gradient problem \eqref{LGP}.
We stress that the condition (S), which is a regularity assumption on $\partial\Omega$ is in force (but, we will see in Proposition \ref{General case} that one can relax this condition). 

To begin with, 
we introduce the following assumptions:\\ 
\\
$\bullet$\,\,\,{\bf{Condition (L1)}}.\,\,\,The domain $\Omega$ can be decomposed into convex disjoint sets $\tilde C_i$ ($i \in I_C$) and disjoint open sets $X_i$ ($i \in I_X$) such that $g$ is constant on $\partial X_i \cap \partial\Omega$. In order to use the setting of Section \ref{Section Convex case} we need data $(C_i, g_i),$ $i \in I_C$, where $C_i$ is open and convex, $g_i\in C(\partial  C_i)$. Namely, we set $C_i:= {\tilde C_i}^\circ$ and $g_i(x) = g(x_i)+ (f\LC (\partial C_i\cap \partial \Omega))(\arc{x_ix})$ and $x_i\in \partial C_i\cap\partial \Omega$ is fixed. Of course, sets $C_i$ are disjoint. Moreover, we have the following:

(1) \,\,For every $i \in I_C$, we assume that 
$g_i \in C(\partial C_i )$
satisfies the condition (H1), i.e. $\partial C_i \cap \,\partial\Omega$ can be decomposed into open arcs \,${\Gamma_j^i}^\pm$ ($j\in I^i_\Gamma$), \,${\chi_j^i}^\pm$ ($j \in I^i_\chi$)\, and $F^i_j$ ($j \in I^i_F$), satisfying all the points of (H1) (see Section \ref{Section Convex case}). We note that $g_i$ is constant on each component of $\partial C_i\cap \Omega.$

In the sequel, we also use the following notations:
$\Gamma^i_j={\Gamma^i_j}^+\cup {\Gamma^i_j}^-$ ($i \in I_C$, $j \in I^i_\Gamma$), $\chi^i_j={\chi^i_j}^+\cup {\chi^i_j}^-$ ($i \in I_C$, $j \in I_\chi^i$), ${\Gamma^i}^{\pm}=\bigcup_{j\in I_\Gamma} {\Gamma^i_j}^{\pm}$, ${\chi^i}^{\pm}=\bigcup_{j\in I_\chi}{\chi^i_j}^{\pm}$, $\Gamma^i={\Gamma^i}^+ \cup {\Gamma^i}^-$, $\chi^i={\chi^i}^+ \cup {\chi^i}^-$ ($i \in I_C$), ${\Gamma}^{\pm}=\bigcup_{i\in I_C} {\Gamma^i}^{\pm}$, ${\chi}^{\pm}=\bigcup_{i\in I_C}{\chi^i}^{\pm}$, $\Gamma={\Gamma}^+ \cup {\Gamma}^-$ and, $\chi={\chi}^+ \cup {\chi}^-$. This will help us express the next conditions.\\


We will note further simple consequences of the assumption (L1).

\begin{lemma}\label{zer-bry}
Let us suppose that $C_i$, $i\in I_C$, is one of the sets defined above, then $f(\partial C_i) =0.$
\end{lemma}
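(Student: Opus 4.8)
The plan is to read off the claim from the zero-mass identity \eqref{zeromass}, applied not to $g$ on $\partial\Omega$ but to the auxiliary datum $g_i$ on the closed curve $\partial C_i$. First I would record the small but essential reduction: since $f=\partial_\tau g$ is a measure carried by $\partial\Omega$, we have $f(\partial C_i)=f(\partial C_i\cap\partial\Omega)$. Thus it suffices to show that the total mass of $f$ over the boundary arcs of $C_i$ vanishes, and I would split $\partial C_i$ into the disjoint pieces $(\partial C_i\cap\Omega)\sqcup(\partial C_i\cap\partial\Omega)$.

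Next I would identify $\partial_\tau g_i$. By the construction in (L1), $g_i(x)=g(x_i)+\bigl(f\LC(\partial C_i\cap\partial\Omega)\bigr)(\arc{x_i x})$; that is, $g_i$ is the cumulative primitive along $\partial C_i$ of the measure $f\LC(\partial C_i\cap\partial\Omega)$, so $\partial_\tau g_i = f\LC(\partial C_i\cap\partial\Omega)$ as a measure on the closed curve $\partial C_i$. In particular $g_i$ is constant on each connected component of $\partial C_i\cap\Omega$ (there the primitive sees no mass of $f$), hence $\partial_\tau g_i$ is concentrated on $\partial C_i\cap\partial\Omega$ and coincides there with $f$.

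Finally I would apply the zero-mass property to $g_i$: since $g_i\in C(\partial C_i)\cap BV(\partial C_i)$ (continuity is part of (L1), and finite total variation follows from $|f|(\partial\Omega)<\infty$), taking the test function $h\equiv 1$ in Definition \ref{df-dist} yields $\partial_\tau g_i(\partial C_i)=0$, exactly as in \eqref{zeromass}. Combining this with the previous step gives
$$0=\partial_\tau g_i(\partial C_i)=f(\partial C_i\cap\partial\Omega)=f(\partial C_i),$$
which is the assertion.

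The only point requiring care — and the closest thing to an obstacle — is the justification that $\partial_\tau g_i$ puts no mass on $\partial C_i\cap\Omega$ and no atoms at the junction points where the interior arcs meet $\partial\Omega$. The former is immediate from $g_i$ being locally constant on $\partial C_i\cap\Omega$; the latter follows from the standing nonatomicity of $f$ (a consequence of the continuity of $g$), so that the splitting of $\partial C_i$ introduces no spurious boundary contributions and the two computations of $\partial_\tau g_i(\partial C_i)$ genuinely agree.
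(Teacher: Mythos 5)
There is a genuine gap, and it sits exactly at the step you flag as the "only point requiring care" --- but the real danger is not where you look for it. Your argument hinges on the identification $\partial_\tau g_i = f\LC(\partial C_i\cap\partial\Omega)$, justified by saying that $g_i$ is the cumulative primitive of this measure. On a \emph{closed} curve this inference is false in general: writing $\mu := f\LC(\partial C_i\cap\partial\Omega)$ and parametrizing $\partial C_i$ by arclength starting at the basepoint $x_i$, a direct computation with Definition \ref{df-dist} gives
$$
\partial_\tau g_i \;=\; \mu \;-\; \mu(\partial C_i)\,\delta_{x_i},
$$
i.e.\ the primitive's distributional derivative carries a compensating atom at $x_i$ of mass $-\mu(\partial C_i)$, which vanishes precisely when $\mu(\partial C_i)=0$ --- the statement of the lemma. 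So the identity $\partial_\tau g_i=\mu$ is \emph{equivalent} to the conclusion, and once the derivative is written correctly, testing with $h\equiv 1$ yields $\mu(\partial C_i)-\mu(\partial C_i)=0$, a tautology rather than the lemma. Your closing paragraph guards against atoms at the junction points of $\partial C_i\cap\Omega$ with $\partial\Omega$ (those are indeed harmless, by nonatomicity of $f$), but nonatomicity of $f$ does nothing to remove the wrap-around atom at $x_i$, which is the only one that matters.

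The proof can be repaired, but then its stated engine becomes superfluous: if one reads (L1) as genuinely \emph{assuming} $g_i\in C(\partial C_i)$ for the function defined by the cumulative formula, then continuity at the single point $x_i$ already gives the result in one line --- letting $x\to x_i$ the long way around, $g_i(x)\to g(x_i)+\mu(\partial C_i)$ must equal $g_i(x_i)=g(x_i)$, hence $\mu(\partial C_i)=0$ --- with no need for the distributional zero-mass identity at all. Note, however, that this reading makes the lemma an immediate consequence of a continuity clause whose validity one may instead want to \emph{verify}; the paper's own proof avoids this delicacy entirely by never touching $g_i$: it decomposes $\partial C_i\cap\partial\Omega$ into the (H1) arcs plus an $|f|$-null set (using piecewise monotonicity), and observes that each pair ${\chi^i_j}^\pm$, ${\Gamma^i_j}^\pm$ contributes zero mass because of the total-variation matching $TV(g|_{+})=TV(g|_{-})$, while flat parts carry no mass. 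That route derives the zero-mass property (and hence the well-definedness and continuity of $g_i$) from the structural hypotheses, so it remains non-circular under either reading of (L1).
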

\begin{proof}
First, we note that $f(\partial C_i) = f(\partial C_i \cap \partial\Omega)$. By definition of $ C_i$, the set $\partial C_i \cap \partial\Omega$ has the following structure:
$$
\partial C_i \cap \partial\Omega=
\bigcup_{j\in I^i_\chi} ({\chi^i_j}^+\cup {\chi^i_j}^-) \cup
\bigcup_{j\in I^i_\Gamma} ({\Gamma^i_j}^+\cup {\Gamma^i_j}^-) \cup
\bigcup_{j\in I^i_F} F^i_j \cup N^i,
$$
where $N^i = (\partial C_i \cap \partial\Omega) \setminus (\Gamma^i\cup \chi^i \cup \bigcup_{j\in I^i_F} F^i_j)$. But, by the assumption that $g$ is piecewise monotone, we see that this set $N^i$ is a null set with respect to the measure $|f|$. 
Hence, by (L1), we get that
$$
f(\partial C_i \cap \partial\Omega)=
\sum_{j\in I^i_\chi} f({\chi^i_j}^+\cup {\chi^i_j}^-) +
\sum_{j\in I^i_\Gamma} f({\Gamma^i_j}^+\cup {\Gamma^i_j}^-) =0.\qedhere
$$
\end{proof}

We follow the same strategy of construction of solutions as in the previous section. Namely, we begin by
introducing a transport map $\bbT:\Gamma^+ \cup \chi^+ \mapsto \Gamma^- \cup \chi^-$. In the first step of the construction, we define $\bbT^i$ on $\partial C_i$ with the help of the formula (\ref{df-T}). We can do this due to Lemma \ref{zer-bry}. 
This definition of $\bbT^i$ 
implies in particular that ${\bf{T}}^i$ has all the properties proved for the map ${\bf{T}}$ in Section \ref{Section Convex case}, (as usual, ${\bbT^i}^{[-1]}$ denotes the inverse of $\bbT^i$). 

After this preparation, we set
\begin{equation}\label{df-TTi}
\bbT(x^+) =  \bbT^i(x^+),\qquad\hbox{whenever }\,x^+\in (\Gamma^+ \cup \chi^+) \cap C_i.
\end{equation}
Since the sets $C_i$ are disjoint, then we see that this map $\bbT$ is well-defined.
Here, come our next requirements on the boundary datum $g$:\\

(2) \,\,For every $i \in I_C$, the restriction of \,$g$\, to $\partial C_i \cap \partial\Omega$ satisfies the condition (H2), i.e. for all $x^+ \in {\Gamma}^+ \cup {\chi}^+$, 
we have $]x^+,\bbT(x^+)[ \subset \Omega$.\\

(3) \,\,For every $i \in I_C$, \,$g$\, satisfies the condition (H3) on $\partial C_i \cap \partial\Omega$, i.e. for any sequence of points $\{e_{k}^+\}_{1 \leq k \leq m}$ (where $m \in \mathbb{N}$) such that $e_{k}^+ \in {\chi_{j_k}^i}^+ \cup \,{\Gamma_{j_k}^i}^+$ (for some $j_k \in I^i_\chi \cup I^i_\Gamma$), we have the following inequality
$$
\sum_{k=1}^m |e_k^+ - \bbT^i(e_k^+)|
\,\,<\,\,
\sum_{k=1}^{m-1}
|e_k^+ - \bbT^i(e_{k+1}^+)| + |e_m^+ - {\bf{T }}^i(e_{1}^+)|.$$ 

\begin{remark}\label{re-doD}
We note that these conditions encompassed in (L1) impose important restrictions on the geometry of $\partial\Omega$. Namely, even if the domain $\Omega$ is not convex,  (L1) implies that the arcs of $\partial C_i\cap \partial\Omega$ must have non-negative curvature for all $i\in I_C$. As a result the 
$D=\{(x_1, x_2)\in \bR^2: x_1\in(-1,1), \ x_2\in( \sqrt{1-x_1^2}, \sqrt{1-x_1^2}+1)\}$ does not satisfy (L1) as soon as $g \in C(\partial D)$ is not constant on the graph of $x_1 \in (-1,1) \mapsto \sqrt{1-x_1^2}$. However, we will also cover in Proposition \ref{Prop. nonconvex case general} the case when some arcs of $\spt(f)$ have negative curvature. 
\end{remark}

Since $\Omega$ is not convex, then we will also need an additional condition that guarantees that the boundary of each set $C_i$ ($i \in I_C$) is transported to itself. Namely, we assume the following:
\\
\\
$\bullet$ \,{\bf{Condition (L2)}}.
 Let $\{{e_k}^\pm\}_{1 \leq k \leq m}$ (where $m \in \mathbb{N}$) be
two finite sequences of points  
such that ${e_{k}}^\pm \in {\chi^{i_k}}^\pm \cup {\Gamma^{i_k}}^\pm$, with $i_k \neq i_{k^\prime}$ for all $k \neq k^\prime$. 
Then, we assume the following additional inequality: 
$$
\sum_{k=1}^m  |{e_k}^+ - {e_k}^-| <
\sum_{k=1}^{m-1} |{e_{k}}^+ - {e_{k+1}}^-| + |{e_{m}}^+ - {e_{1}}^-|.$$
We stress that here $e_k^+$ and $e^-_k$ are two arbitrary points on $\partial C_{i_k}$. In particular, $e_k^-$ needs not to be the image of $e_k^+$ under the map ${\bf{T}}$. 
\begin{proposition}\label{Prop. nonconvex case} 
Assume 
that 
conditions (L1) and (L2) hold.
Let \,$\gamma$\, be an optimal transport plan for Problem \eqref{Kantorovich}, then for $\gamma-$a.e. $(x^+,x^-)\in \spt (\gamma)$, there exists $i\in I_C$ such that $x^-={\bf{T}}^i(x^+)$. In particular, the optimal transport plan $\gamma$ is unique and, we have $\gamma=(Id,{\bf{T}})_{\#}f^+$.
\end{proposition}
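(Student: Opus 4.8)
The plan is to prove that \emph{every} optimal plan $\gamma$ for \eqref{Kantorovich} is concentrated on the graph of $\bbT$; uniqueness and the identity $\gamma=(Id,\bbT)_\#f^+$ then follow simultaneously. I would split the argument into two independent facts: \textbf{(A)} $\gamma$ moves no mass between distinct pieces, i.e. for $\gamma$-a.e. $(x^+,x^-)$ there is a single $i\in I_C$ with $x^+,x^-\in\partial C_i$; and \textbf{(B)} granted \textbf{(A)}, the restriction of $\gamma$ to each block $\partial C_i\times\partial C_i$ equals $(Id,\bbT^i)_\#(f^+\LC\partial C_i)$. The engine throughout is that an optimal plan has cyclically monotone support, see \cite[Theorem 1.38]{Santambrogio}: for any $(a_k,b_k)\in\spt(\gamma)$, $k=1,\dots,n$, and any permutation $\sigma$ one has $\sum_k|a_k-b_k|\le\sum_k|a_k-b_{\sigma(k)}|$.

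For \textbf{(A)} I set $A_i:=\partial C_i\cap\partial\Omega$ and record that, by Lemma \ref{zer-bry}, each piece is mass-balanced, $f^+(A_i)=f^-(A_i)$. Writing $m_{ij}:=\gamma(A_i\times A_j)$, the marginal conditions give $\sum_j m_{ij}=f^+(A_i)=f^-(A_i)=\sum_j m_{ji}$, so $\{m_{ij}\}$ is a nonnegative circulation on the (at most countable) index graph. If some $m_{ij}>0$ with $i\neq j$, a routine reachability argument shows that this edge lies on a finite simple directed cycle $i_1\to i_2\to\cdots\to i_n\to i_1$ with distinct $i_k$ and all $m_{i_ki_{k+1}}>0$. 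Since $A_{i_k}\times A_{i_{k+1}}$ then has positive $\gamma$-mass, I may choose $(a_k,b_k)\in\spt(\gamma)\cap(A_{i_k}\times A_{i_{k+1}})$ with $a_k$ on a ``$+$'' monotone arc and $b_k$ on a ``$-$'' monotone arc (the flat parts carry no $|f|$-mass). The crucial observation is that $a_k$ and $b_{k-1}$ both lie on $\partial C_{i_k}$, hence they are admissible inputs $e_k^+:=a_k$, $e_k^-:=b_{k-1}$ for condition (L2), whose indices $i_k$ are pairwise distinct. Cyclic monotonicity applied with the shift $\sigma(k)=k-1$ gives $\sum_k|a_k-b_k|\le\sum_k|a_k-b_{k-1}|$, whereas (L2) unwinds (with $b_0:=b_n$) to $\sum_k|a_k-b_{k-1}|<\sum_k|a_k-b_k|$; the two are contradictory. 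Therefore $m_{ij}=0$ for all $i\neq j$, which is \textbf{(A)}.

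I expect \textbf{(A)} to be the heart of the proof: the delicate points are the extraction of a genuine finite simple cycle from the balanced circulation (which relies on $f(\partial C_i)=0$) and the verification that the cyclic-shift pairing feeds condition (L2) in exactly the form that reverses the cyclic-monotonicity inequality. Step \textbf{(B)} is then comparatively soft. By \textbf{(A)} the decomposition $\gamma=\sum_{i\in I_C}\gamma_i$ with $\gamma_i:=\gamma\LC(\partial C_i\times\partial C_i)$ has marginals $(\Pi_x)_\#\gamma_i=f^+\LC\partial C_i$ and $(\Pi_y)_\#\gamma_i=f^-\LC\partial C_i$. Each $\gamma_i$ has cyclically monotone support (being a subset of $\spt(\gamma)$), so it is optimal for the transport problem on the convex set $C_i$ with data $f^+\LC\partial C_i$ and $f^-\LC\partial C_i$, which by (L1) satisfy (H1), (H2), (H3) and (S). Proposition \ref{Prop. convex case} together with the uniqueness in Proposition \ref{Uniqueness in the convex case} then forces $\gamma_i=(Id,\bbT^i)_\#(f^+\LC\partial C_i)$. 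Summing over $i$ and using the definition \eqref{df-TTi} of $\bbT$ yields $\gamma=(Id,\bbT)_\#f^+$; since this holds for every optimal $\gamma$, the optimal plan is unique.
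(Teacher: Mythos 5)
Your proof is correct, and it reaches the conclusion by a genuinely different route than the paper. For your part \textbf{(A)}, the paper also exploits the mass balance $f(\partial C_i)=0$ to build chains of support points, but it then splits into two cases: when $I_C$ is finite it shows a chain must revisit some index, closes the resulting loop, and contradicts (L2) using the Kantorovich potential identity \eqref{transport ray} (equivalent to your use of cyclical monotonicity); when $I_C$ is countably infinite it runs a separate iterative mass-removal argument (restricting $\gamma$ successively to the complements of sets $E_n^+\times E_{n+1}^-$ of mass $c>0$, so that the total mass drops by $nc$ while a lower bound $\geq c$ persists, a contradiction for large $n$). Your balanced-circulation argument on the index graph — conservation $\sum_j m_{ij}=\sum_j m_{ji}$ plus finiteness of the total mass forces the set of nodes reachable from $j_0$ along positive-flow edges to receive no inflow from its complement, hence to contain $i_0$, yielding a finite simple cycle — subsumes both cases at once, and the pairing $e_k^+:=a_k$, $e_k^-:=b_{k-1}$ does feed (L2) in exactly the form that clashes with cyclical monotonicity; this is a real streamlining. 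For your part \textbf{(B)}, the paper avoids the converse implication ``cyclically monotone support implies optimality'' (which for the continuous cost $|x-y|$ is standard but is a further theorem from \cite{Santambrogio}); instead it argues by direct cost comparison: taking the unique optimal plans $\eta_i$ on each block and $\eta=\sum_i\eta_i$, the chain $\int|x-y|\,\mathrm{d}\gamma\le\int|x-y|\,\mathrm{d}\eta\le\int|x-y|\,\mathrm{d}\gamma$ forces each $\gamma_i$ to be optimal, after which Proposition \ref{Uniqueness in the convex case} identifies it. Your route is shorter but leans on that citable optimality criterion; the paper's is more elementary and self-contained. Two minor points you should make explicit if you write this up: the cycle extraction needs the finiteness of the total mass of $\gamma$ to justify summing the conservation law over a (possibly infinite) set of nodes, and the arcs $A_i$ may share endpoints, which is harmless only because $f$ is atomless.
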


\begin{proof}
We claim 
that $\gamma(\bar C_i \times \bar C_j)=0$. For all $i \neq j$. The proof 
is similar to the one in Proposition \ref{strictly_convex_case} and it is performed in Steps 1.1 till 1.3.

{\it Step 1.1.} Let us assume that there is a couple $(e_1^+, e_2^-) \in \spt(\gamma)$\, with
\,$e_1^+ \in  {\Gamma^{i_1}}^+ \cup {\chi^{i_1}}^+$ but 
$e_2^- \in 
 \,{\Gamma^{i_2}}^- \cup {\chi^{i_2}}^-$, where $i_1 \neq i_2$.
Then, we are going to construct two sequences of points in $\Gamma^\pm \cup \chi^\pm$,
$$
e_1^+,\,e_2^+,\,\ldots \quad\hbox{and}\quad e_2^-,\,e_3^-,\,\ldots
$$
such that $(e^+_k, e^-_{k+1})\in \spt(\gamma)$, $e^+_k \in {\Gamma^{i_k}}^+ \cup {\chi^{i_k}}^+$ and $e^-_{k+1} \in {\Gamma^{i_{k+1}}}^- \cup {\chi^{i_{k+1}}}^-$,
where \,$i_k \neq i_{k+1}$. 

Let us suppose that we have $e^-_k\in {\Gamma^{i_k}}^- \cup {\chi^{i_k}}^-$ and $e^+_{k-1}\in {\Gamma^{i_{k-1}}}^+ \cup {\chi^{i_{k-1}}}^+$. We will construct $(e^+_k, e^-_{k+1})$. Since $f$ is atomless, then  
there is a Borel set $G^-_k\subset {\Gamma^{i_k}}^- \cup {\chi^{i_k}}^-$ containing $e^-_k$ with $f^-(G^-_k)>0$, which was transported from a set $G_{k-1}^+ \subset {\Gamma^{i_{k-1}}}^+ \cup {\chi^{i_{k-1}}}^+$ containing $e^+_{k-1}$. Yet, thanks to Lemma \ref{zer-bry}, we know that $f(\partial C_{i_k})=0$. Hence, the mass imported into $C_{i_k}$ must be balanced with an equal outflow of the mass.
Then, there will be a Borel set $G^+_k\subset {\Gamma^{i_k}}^+ \cup {\chi^{i_k}}^+$ with $0<f^+(G^+_k) \leq  f^-(G^-_k)$, which is transported to a set $G^-_{k+1}\subset {\Gamma^{i_{k+1}}}^- \cup {\chi^{i_{k+1}}}^-$. So, let us just pick any couple $(e^+_k, e^-_{k+1}) \in (G_k^+\times G^-_{k+1})\cap \spt(\gamma)$.  
In this way, we get sequences with the desired properties.

{\it Step 1.2.}  Let us assume that the index set $I_C$ is finite.
Now, we claim that $i_k \neq i_{k^\prime}$, for all $k \neq k^\prime$. Let us suppose that there exist \,$l,\,m \geq 1$\, such that $e_l^\pm,\,e_{l+m+1}^\pm \in {\Gamma^{i_l}}^\pm \cup {\chi^{i_l}}^\pm$ (i.e., one has $i_l=i_{l+m+1}$).
Let $\phi$ be again a Kantorovich potential between $f^+$ and $f^-$. Then, thanks to the equality \eqref{transport ray}, we get that 
\begin{align*}
 \sum_{k=l}^{l+m-1} |{e^+_{k}} - {e^-_{k+1}}| + |{e^+_{l+m}} - {e^-_{l+m+1}}|&=\sum_{k=l}^{l+m-1} [\phi({e^+_{k}}) - \phi({e^-_{k+1}})] + [\phi({e^+_{l+m}}) - \phi({e^-_{l+m+1}})]\\
&= [\phi({e^+_l}) - \phi({e^-_{l+m+1}})]+\sum_{k=l+1}^{l+m}  [\phi({e^+_k}) - \phi({e^-_k})] \\
& \leq |{e^+_l} - {e^-_{l+m+1}}|+\sum_{k=l+1}^{l+m}  |{e^+_k} - {e^-_k}|.   
\end{align*}

However, this contradicts the inequality in the assumption (L2).  


{\it Step 1.3.} Finally, it remains to consider 
the case when 
$I_C=\mathbb{N}^\star$. {From now on, for the sake of simplicity of notation we assume that $i_k =k$.}

Since we assumed that our claim does not hold,
there is an arc $E_1^+ \subset \partial C_{1}$ and another one $E_2^- \subset \partial C_{2}$ with $\gamma(E_1^+ \times E_2^-)=c>0$. Set $\gamma_1=\gamma\res{(E_1^+ \times E_2^-)^c}$, $f_1^+=(\Pi_x)_{\#}\gamma_1$\, and \,$f_1^-=(\Pi_y)_{\#}\gamma_1$. Then, we have $$f_1^\pm(\partial\Omega)=f^\pm(\partial\Omega)-c.$$ 

Since $\partial C_{1} \subset \partial\Omega\setminus E_2^-$, then we also have 
$$f_1^-(\partial\Omega) \geq f^-(\partial C_{1})=f^+(\partial C_{1}) \geq f^+(E_1^+)\geq c.$$
Yet, $f(\partial C_{2})=0$. Then, there will be a set $E_2^+ \subset \partial C_{2}$
with another set $E_3^- \subset \cup_{k \in I_3}  \partial C_{k}$ 
such that $\gamma(E_{2}^+ \times E_{3}^-)=c$.
Again, we define $\gamma_2=\gamma_1\res{(E_{2}^+ \times E_{3}^-)^c}$, $f_2^+=(\Pi_x)_{\#}\gamma_2$\, and \,$f_2^-=(\Pi_y)_{\#}\gamma_2$. Thanks to $\partial C_{1} \subset \partial\Omega\setminus (E_2^- \cup E_3^-)$, we have $$f_2^\pm(\partial\Omega)=f_1^\pm(\partial\Omega)-c=f^\pm(\partial\Omega)-2c 
\qquad \mbox{and} \qquad f_2^-(\partial\Omega) \geq f^-(\partial C_{1}) \geq c.$$
Fix $n \geq 3$. By induction: since $f(\partial C_{k})=0$\, for all $k \in I_{n}$ then there will be a set $E_{n}^+ \subset \cup_{k \in I_n} \partial C_{k}$ and another set $E_{n+1}^- \subset \cup_{k \in I_{n+1}} \partial C_{k}$ 
such that $\gamma(E_{n}^+ \times E_{n+1}^-)=c$.
We also define $\gamma_n=\gamma_{n-1}\res{(E_{n}^+ \times E_{n+1}^-)^c}$,
\,$f_n^+=(\Pi_x)_{\#}\gamma_n$\, and \,$f_n^-=(\Pi_y)_{\#}\gamma_n$. Using that $\partial C_1 \subset \partial\Omega \setminus \cup_{i \in I_{n+1}} E_i^-$, we have  $$f_n^\pm(\partial\Omega)=f_{n-1}^\pm(\partial\Omega)-c=f^\pm(\partial\Omega)-nc \qquad \mbox{and} \qquad
f_n^-(\partial\Omega) \geq f^-(\partial C_{1}) \geq c.$$  
But, this yields obviously to a contradiction as soon as $n$ is large enough. As a consequence, the claim that $\gamma(\bar C_i \times \bar C_j)=0$, for all $i \neq j$, is proved.

{\it Step 2.}
We claim that the restriction of $\gamma$ to $C_i \times C_i$ is the unique optimal transport plan between its corresponding marginals. 
For $i\in I_C$, we introduce
$$
\gamma_i := \gamma \LC  (
\bar C_i\times \bar 
C_i),\qquad f^\pm_i := f^\pm \LC  \bar C_i.
$$
Thanks to our assumption on $\gamma$, we see that $(\Pi_x)_{\#}\gamma_i=f_i^+$ and $(\Pi_y)_{\#}\gamma_i=f_i^-$.
Let us suppose that $\eta_i$ is a solution to the Monge-Kantorovich problem on $C_i$ with data $f^+_i$, $f^-_i$, $i\in I_C$. We notice that since by assumption $C_i$ is convex and $g_i:=g\res \partial C_i$ satisfies condition (H1)-(H3), then due to Proposition \ref{Uniqueness in the convex case}, $\eta_i$ is the unique optimal transportation plan and it is induced by a map $T_i$, i.e. $\eta_i =(Id, T_i)_\# f_i^+$. 
The optimality of $\eta_i$ on $\bar C_i \times \bar C_i$ implies that
\begin{equation}\label{r-gamma3}
\int_{\bar C_i\times \bar C_i}|x-y|\,d \eta_i
\le \int_{
\bar C_i\times 
\bar C_i}|x-y|\,d \gamma_i.
\end{equation}
Now, we set $\eta = \sum_{i\in I_C} \eta_i$. The optimality of $\gamma$ between $f^+$ and $f^-$ as well as the admissibility of $\eta$ lead to
$$\int_{\bar\Omega\times \bar \Omega} |x-y|\,d\gamma \le\int_{\bar\Omega\times \bar \Omega} |x-y|\,d\eta.
$$
Yet, from \eqref{r-gamma3}, we also have
$$\int_{\bar\Omega\times \bar \Omega} |x-y|\,d\eta \leq  \int_{\bar\Omega\times \bar \Omega} |x-y|\,d\gamma.$$\\
Hence, we get that the inequality in \eqref{r-gamma3} is in fact an equality for every $i\in I_C$, i.e. we have 
$$
\int_{\bar C_i \times \bar C_i} |x-y|\,d\eta_i=
\int_{\bar C_i\cap\bar C_i} |x-y|\,d\gamma_i.
$$
This means that $\gamma_i=\eta_i=(Id, {\bf T}_i)_\# f^+_i$ is the unique optimal transport plan between $f_i^+$ and $f_i^-$. Our claim follows.
Yet, we have 
\begin{equation}\label{r-gamma}
\gamma = \sum_{i\in I_C} \gamma_i. 
\end{equation}
Thus, we deduce that
\begin{equation}\label{r-gamma2}
\gamma=\sum_{i\in I_C} (Id, {\bf T}_i)_\# f_i^+ = (Id, {\bf T})_\# f.
\end{equation}

Hence, we have proved that if $\gamma$ is an optimal transport plan between $f^+$ and $f^-$, then we have $\gamma=(Id,{\bf{T}})_{\#}f^+$. Yet, the map ${\bf{T}}$ does not depend on $\gamma$ as a result, the optimal transport plan $\gamma$ is unique. This concludes the proof. $\qedhere$
\end{proof}

\begin{proposition} \label{existence & uniqueness of an optimal flow in the nonconvex case L}
Assume that (L1) and (L2) hold. 
Then, we have \,$\eqref{Beckmann}=\eqref{Kantorovich}$. Moreover, $v_\gamma$ is a unique solution of Problem \eqref{Beckmann}, provided that $\gamma=(Id,{\bf{T}})_{\#}f^+$. 
\end{proposition}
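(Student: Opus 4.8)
The plan is to mirror the scheme used for the convex case in Proposition \ref{existence & uniqueness of an optimal flow 0}, the only novelty being that the global convexity of $\Omega$ must be replaced by the decomposition furnished by (L1) together with the identification of the unique optimal plan $\gamma=(Id,{\bf T})_{\#}f^+$ obtained in Proposition \ref{Prop. nonconvex case}. First I would record the inequality available on any bounded domain: the duality $\eqref{Kantorovich}=\eqref{dual}$ and the elementary bound $\eqref{dual}\le\eqref{Beckmann}$ noted in Section \ref{sec:Prelim} give at once $\eqref{Kantorovich}\le\eqref{Beckmann}$.

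For the reverse inequality I would produce an admissible competitor of cost $\eqref{Kantorovich}$, namely the field $v_\gamma$ of \eqref{flow} associated with $\gamma=(Id,{\bf T})_{\#}f^+$. The essential observation is that, although $\Omega$ is not convex, condition (H2) contained in (L1) forces every segment $]x,{\bf T}(x)[$ to lie in $\Omega$; hence the integrals in \eqref{flow} and \eqref{Transport density definition} only charge $\bar\Omega$, and $\sigma_\gamma(\bar\Omega)=\int|x-y|\,d\gamma=\eqref{Kantorovich}<\infty$, so that $v_\gamma$ and $\sigma_\gamma$ are well defined finite measures. Admissibility $\nabla\cdot v_\gamma=f$ follows from the usual computation, valid precisely because the segments remain in $\bar\Omega$: for every $\varphi\in C^1(\bar\Omega)$,
$$-\int_{\bar\Omega}\nabla\varphi\cdot dv_\gamma=\int_{\bar\Omega\times\bar\Omega}[\varphi(x)-\varphi(y)]\,d\gamma=\int_{\bar\Omega}\varphi\,df.$$
Since $|v_\gamma|\le\sigma_\gamma$ as measures, we get $\int_{\bar\Omega}|v_\gamma|\le\sigma_\gamma(\bar\Omega)=\eqref{Kantorovich}$, and combining with the first paragraph yields $\eqref{Beckmann}\le\int_{\bar\Omega}|v_\gamma|\le\eqref{Kantorovich}\le\eqref{Beckmann}$. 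All inequalities are therefore equalities: $\eqref{Beckmann}=\eqref{Kantorovich}$ and $v_\gamma$ minimizes \eqref{Beckmann}.

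It remains to establish uniqueness, and this is the step I expect to be the main obstacle, since the representation of optimal vector fields through optimal plans in \cite[Theorem 4.13]{Santambrogio} is classically stated for convex domains. To transfer it I would exploit (L1): by Proposition \ref{Prop. nonconvex case} the transport never crosses the interfaces $\partial C_i$, so the Beckmann problem splits into the convex subproblems on each $C_i$ with data $f_i^\pm=f^\pm\LC\bar C_i$, and on each of these \cite[Theorem 4.13]{Santambrogio} applies while Proposition \ref{Uniqueness in the convex case} provides a unique optimal plan. Given an arbitrary minimizer $v$ of \eqref{Beckmann}, the equality $\int_{\bar\Omega}|v|=\eqref{dual}$ forces the alignment $v=-|v|\,\nabla\phi$ with $\phi$ a Kantorovich potential, the same mechanism that in the convex case yields $v=v_{\gamma'}$ for an optimal $\gamma'$; the delicate point, which I would treat with care, is to check that this $v$ carries no flux across the interfaces $\partial C_i$ nor through the inert regions $X_i$ --- a fact that should follow from Lemma \ref{zer-bry}, from $f$ vanishing on $\partial X_i\cap\partial\Omega$, and from the confinement of the transport rays to the $C_i$. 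Restricting $v$ to each $\bar C_i$ then gives $v=v_{\gamma'}$ with $\gamma'=\sum_i\gamma_i$, and the uniqueness $\gamma'=(Id,{\bf T})_{\#}f^+=\gamma$ from Proposition \ref{Prop. nonconvex case}, glued through \eqref{r-gamma}, forces $v=v_\gamma$.
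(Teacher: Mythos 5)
Your first two paragraphs are correct and essentially coincide with the paper's own argument: condition (H2) contained in (L1) makes $v_\gamma$ well defined, the usual computation gives admissibility, and the chain $\eqref{Beckmann}\le\int_{\overline{\Omega}}|v_\gamma|\le\eqref{Kantorovich}=\eqref{dual}\le\eqref{Beckmann}$ closes the equality of the infima and shows that $v_\gamma$ is a minimizer.

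The uniqueness step, however, contains a genuine gap, and it sits exactly at the point you flag as delicate. Your plan is to split the Beckmann problem over the convex pieces $C_i$ and apply the convex theory on each piece; but to do this for an \emph{arbitrary} minimizer $v$ you must first know that $v$ carries no flux across the interfaces $\partial C_i\cap\Omega$ and puts no mass in the regions $X_j$. The ingredients you invoke for this --- Lemma \ref{zer-bry}, the vanishing of $f$ on $\partial X_i\cap\partial\Omega$, and ``the confinement of the transport rays to the $C_i$'' --- are statements about the data $f$ and about optimal \emph{plans}; they say nothing about a competitor vector measure $v$ before $v$ has been represented as $v_{\gamma'}$ for some plan $\gamma'$. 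In other words, the no-crossing property of Proposition \ref{Prop. nonconvex case} cannot be transferred to $v$ without the very representation theorem you are trying to work around, so the argument is circular. (In addition, the alignment $v=-|v|\,\nabla\phi$ needs justification, since $\phi$ is only Lipschitz while the divergence constraint is tested against $C^1$ functions; and even granting it, alignment does not by itself exclude mass of $|v|$ on segments along which $\phi$ decreases at unit speed but which are not used by the optimal plan --- the interfaces $\partial C_i\cap\Omega$ themselves are candidates for such segments.)

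The paper closes this gap differently: having just established $\eqref{Beckmann}=\eqref{Kantorovich}$, it invokes the adaptation of \cite[Theorem 4.13]{Santambrogio} to non-convex domains given in \cite[Proposition 2.6 (3)]{SD}, whose only extra hypothesis is precisely this equality of values, to conclude that every minimizer of \eqref{Beckmann} is of the form $v_{\gamma'}$ with $\gamma'$ optimal for \eqref{Kantorovich}; uniqueness then follows at once from the uniqueness of the plan in Proposition \ref{Prop. nonconvex case}. Your proof becomes correct if you replace the convex-pieces splitting by this citation, or reprove it directly via Smirnov's decomposition of $v$ into curves, where the equality of the two infima forces almost every curve to be a straight transport ray.
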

\begin{proof}
We recall that we always have $\eqref{dual}\leq \eqref{Beckmann}$. 
In addition, we see that $v_\gamma$ is well defined thanks to the fact that $]x,{\bf{T}}(x)[ \subset \Omega$ (see (L1)), for $f^+-$a.e. $x$. Moreover, $v_\gamma$ is clearly admissible in Problem \eqref{Beckmann} and we have
$$\int_{\overline{\Omega}}|v_\gamma| =\int_{\overline{\Omega} \times \overline{\Omega}}|x-y|\mathrm{d}\,\gamma=\eqref{Kantorovich}=\eqref{dual}\leq \eqref{Beckmann}.$$
Hence, $v_\gamma$ solves Problem \eqref{Beckmann} and, we have $\eqref{Beckmann}=\eqref{Kantorovich}$. It is worth noting that although $\Omega$ is not convex, we have proved that the values of the infima of Problems \eqref{Beckmann} and \eqref{Kantorovich} are exactly the same. Thanks to this fact, following the argument in \cite[Proposition 2.6 (3)]{SD} for non-convex domains, we can adapt the result in \cite[Theorem 4.13]{Santambrogio} 
to deduce that if $v$ is an optimal vector field for Problem \eqref{Beckmann}, then there will be an optimal transport plan $\gamma^\prime$ for Problem \eqref{Kantorovich} such that $v=v_{\gamma^\prime}$ 
Yet, by Proposition \ref{Prop. nonconvex case}, the optimal transport plan $\gamma$ is unique and so, the solution $v_\gamma$ of Problem \eqref{Beckmann} is unique as well. $\qedhere$
\end{proof}

\begin{theorem} \label{Theorem 4.5}
Under the assumptions (L1) and (L2), there exists a unique solution $u$ to Problem \eqref{LGP} provided that 
$g\in W^{1,1}(\partial\Omega)$ is piecewise monotone.
\end{theorem}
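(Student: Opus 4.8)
The plan is to assemble the results already established for the non-convex setting, mirroring the way Theorem \ref{main theorem convex case} was deduced in the convex case. First I would invoke Proposition \ref{existence & uniqueness of an optimal flow in the nonconvex case L}, which under (L1) and (L2) gives that $\gamma=(Id,{\bf{T}})_{\#}f^+$ is the unique optimal transport plan, that $\eqref{Beckmann}=\eqref{Kantorovich}$, and that the induced vector field $v_\gamma$ from \eqref{flow} is the unique minimizer of the Beckmann problem \eqref{Beckmann}. Since $\Omega$ is simply connected with Lipschitz boundary and piecewise monotonicity of $g$ ensures $g\in C(\partial\Omega)\cap BV(\partial\Omega)$, the equivalence of Proposition \ref{t-equiv} between \eqref{LGP} and \eqref{Beckmann} is available.

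The only real verification needed is that $v_\gamma$ gives zero mass to $\partial\Omega$, as this is precisely the hypothesis required to apply part (3) of Proposition \ref{t-equiv}. Evaluating the transport density \eqref{Transport density definition} at $\gamma=(Id,{\bf{T}})_{\#}f^+$ yields
\begin{equation*}
\sigma_\gamma(\partial\Omega)=\int_{\partial\Omega}\mathcal{H}^1\big([x,{\bf{T}}(x)]\cap\partial\Omega\big)\,\mathrm{d}f^+(x),
\end{equation*}
exactly as in \eqref{r-pr3.15}. Condition (2) of (L1), namely (H2) on each $\partial C_i$, guarantees $]x,{\bf{T}}(x)[\subset\Omega$ for $f^+$-a.e.\ $x$, so $[x,{\bf{T}}(x)]\cap\partial\Omega$ reduces to the endpoints and has vanishing $\mathcal{H}^1$-measure. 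Hence $\sigma_\gamma(\partial\Omega)=0$, and since $|v_\gamma|\le\sigma_\gamma$ we obtain $|v_\gamma|(\partial\Omega)=0$.

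With this at hand, Proposition \ref{t-equiv}(3) produces $u\in BV(\Omega)$ solving \eqref{LGP} with $v_\gamma=R_{\frac{\pi}{2}}Du$, giving existence. For uniqueness, if $u_1,u_2$ both solve \eqref{LGP}, then by Proposition \ref{t-equiv}(2) both $R_{\frac{\pi}{2}}Du_1$ and $R_{\frac{\pi}{2}}Du_2$ solve \eqref{Beckmann}; uniqueness of the Beckmann minimizer forces $R_{\frac{\pi}{2}}Du_1=R_{\frac{\pi}{2}}Du_2$, hence $Du_1=Du_2$ and $u_1-u_2$ is constant. Since both functions have trace $g$ on $\partial\Omega$, this constant vanishes and $u_1=u_2$.

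The argument is thus essentially a corollary of the two preceding propositions together with Proposition \ref{t-equiv}, and I do not expect a genuine obstacle. The one step deserving care is the boundary-mass computation: it is the containment $]x,{\bf{T}}(x)[\subset\Omega$ from (H2)/(L1) — not convexity of $\Omega$ — that makes part (3) of the equivalence applicable in the non-convex regime, and this is the crux that the hypotheses are designed to secure.
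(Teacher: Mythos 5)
Your proof is correct and follows essentially the same route as the paper's: invoke Proposition \ref{existence & uniqueness of an optimal flow in the nonconvex case L} for the unique Beckmann minimizer $v_\gamma$, verify $|v_\gamma|(\partial\Omega)=0$ via condition (2) of (L1) (i.e. (H2)), and conclude with Proposition \ref{t-equiv}. The paper's own proof is simply terser, leaving the boundary-mass computation and the trace-based uniqueness argument implicit where you spell them out.
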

\begin{proof}
By Proposition \ref{existence & uniqueness of an optimal flow in the nonconvex case L},  $v_\gamma$ (where $\gamma=(Id,{\bf{T}})_{\#}f^+$) is the unique optimal flow in Problem \eqref{Beckmann}. Yet, thanks to (L1), it is clear that $|v_\gamma|(\partial\Omega)=0$. Then, Proposition  \ref{t-equiv} concludes the proof. 
\end{proof}
\begin{example}\rm
Set $\Omega=\Omega^+\setminus \Omega^-$ with $\Omega^+=[-1,1]\times [0,1]$ and $\Omega^-=[-a,a]\times [0,b]$, with $0<a,\,b<1$. We define the boundary data $g$ on $\partial \Omega$ as follows:
$$g(x_1,x_2)=\begin{cases} 
0 \quad &([a,1]\times \{0\})\cup (\{1\}\times [0,b])\cup (\{-1\}\times [0,b])\cup ([-1,-a]\times \{0\}),\\ 
\dfrac{b-x_2}{1-b} & (\{1\}\times[b,1])\cup (\{-1\}\times [b,1]), \\
-1 & ([-1, 1]\times \{1\})\cup ([-a,a]\times \{b\}), \\
-\dfrac{x_2}{b} & (\{-a\}\times [0,b])\cup (\{a\}\times [0,b]).
\end{cases} $$\\
In this case, $f^+= f^+_1 + f^+_2$\, and \,$f^-= f^-_1 + f^-_2$, where
$$
f^+_1 =\frac{1}{b} \cH^1\LC (\{a\} \times [0,b]),\qquad f^-_1=\frac{1}{1-b}\cH^1\LC (\{1\} \times [b,1]),
$$
$$
f^+_2=\frac{1}{1-b}\cH^1\LC (\{-1\} \times [b,1]),\qquad 
  f^-_2=\frac{1}{b}\cH^1\LC (\{-a\} \times [0,b]).
$$
\begin{figure}[h]
\begin{tikzpicture}
  \draw (0, 0) --(2,0)--(2,2)--(5,2)--(5,0)--(7,0)--(7,5)--(0,5)--(0,0);

  \node at (2, -.3) {$(-a,0)$};
  \node at (5, -.3) {$(a,0)$};
  \node at (2,2.2) {$(-a,b)$};

  \fill (2,0.3) circle (2pt); \node at (2.3,0.3) {$e_2^-$};
    \fill (0,2.45) circle (2pt); \node at (-0.3,2.45) {$e_2^+$};
  \fill (5,0.3) circle (2pt); \node at (4.7,0.3) {$e_1^+$};
     \fill (7,2.45) circle (2pt); \node at (7.3,2.45) {$e_1^-$};

   \draw[blue] (2,0.3)--(0,2.45); \draw[blue] (5,0.3)--(7,2.45);
   \draw[green] (2,0.3)--(5,0.3); \draw[green] (0,2.45)--(7,2.45);

   \node[rotate=90,anchor=south] at (0,3.4){{\tiny$f_2^+=\dfrac{1}{1-b}$}};
   \node[rotate=90,anchor=south] at (7.8,3.4) {{\tiny $f_1^-=\dfrac{1}{1-b}$}};
   \node[rotate=90,anchor=south] at (2.7,1.2) {{\tiny $f_2^-=\dfrac{1}{b}$}};
   \node[rotate=90,anchor=south] at (5,1) {{\tiny $f_1^+=\dfrac{1}{b}$}};

\fill[gray,opacity=0.5] (0,0)--(2,0)--(0,2); 
\fill[gray,opacity=0.5] (5,0)--(7,0)--(7,2); 
\fill[gray,opacity=0.5] (0,5)--(2,2)--(5,2)--(7,5);

\node at (3.5,3.5) {$X_1$};
\node at (0.5,0.5) {$X_2$};
\node at (6.5,0.5) {$X_3$};


\fill[red,opacity=0.1] (0,2)--(0,2.75)--(2,0.5)--(2,0); 
\fill[red,opacity=0.2] (0,2.75)--(0,3.5)--(2,1)--(2,0.5); 
\fill[red,opacity=0.3] (0,3.5)--(0,4.25)--(2,1.5)--(2,1); 
\fill[red,opacity=0.4] (0,4.25)--(0,5)--(2,2)--(2,1.5); 

\fill[red,opacity=0.5] (5,0)--(5,0.5)--(7,2.75)--(7,2); 
\fill[red,opacity=0.6] (5,0.5)--(5,1)--(7,3.5)--(7,2.75); 
\fill[red,opacity=0.7] (5,1)--(5,1.5)--(7,4.25)--(7,3.5); 
\fill[red,opacity=0.8] (5,1.5)--(5,2)--(7,5)--(7,4.25);

\end{tikzpicture}
\caption{Rectilinear C-shape example}
\label{fig C shape}

\end{figure}
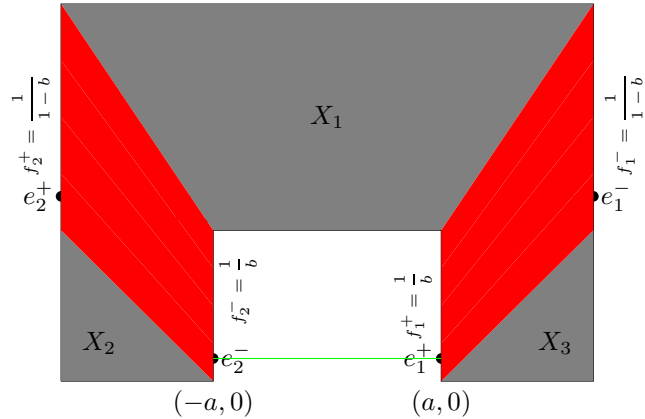 

In order to prove existence of a solution to Problem \eqref{LGP}, we subdivide our region $\Omega$ into sets satisfying conditions (L1) and (L2). As shown in Figure \ref{fig C shape}, we set $X_1$ to be the trapezoid with vertices $(-a,b), (a,b), (-1,1), (1,1)$, $X_2$ the triangle with vertices $(-a,0), (-1,b), (-1,0)$, 
and $X_3$ the triangle with 
vertices $(a,0)$, $(1,b)$ and 
$(1,0)$. We want to subdivide 
$C:=\Omega\setminus ({\bar X_1}\cup 
\bar X_2\cup \bar X_3)$ into convex 
sets $C_i$'s (in red in Figure 
\ref{fig C shape}) satisfying 
condition (L2) and such that each $C_i$ satisfies (H1)-(H3). Notice that for $s\in [0,1]$, we have
$$g(a,sb)=g(-a,sb)=g(1,b+(1-b)s)=g(-1,b+(1-b)s)=-s.$$
After taking a 
partition of $[0,1]$, $0=s_0< s_1<\cdots <s_n=1$
and $\Delta 
s_i=s_i-s_{i-1}$, we construct convex domains $C_{i,l/r}$ as follows:
$C_{i, l}$ (resp. $C_{i,r}$) is
an open
quadrilateral in red with 
vertices $(- a, s_{i-1}b),$ 
$(- 1, b+ (1-b)s_{i-1}),$ $(- 1, 
b+(1-b)s_i),$ $(- a,s_i b)$ (respectively, the 
quadrilateral in red with 
vertices $( a, s_{i-1}b),$ 
$(1, b+ (1-b)s_{i-1}),$ $( 1, 
b+(1-b)s_i),$ $(a,s_i b)$). As a result $C=\left((\cup_{i=1}^n \bar C_{i,l})\cup (\cup_{i=1}^n\bar C_{i,r})\right)^\circ$.

Notice that 
$\partial C_{i,r} \cap \partial\Omega$ 
can be decomposed into two arcs:
\begin{align*}
&{\Gamma_{i,r}}^+=(a, s_{i-1}b\arc{)\ (}a,s_{i}b),\quad {\Gamma_{i,r}^-}
=(1, b+(1-b)s_{i-1}\arc{)\ (}
1, b+(1-b)s_i).
\end{align*}
By symmetry, we decompose $C_{i,l}$ in similar way. Then, it
follows that all convex domains 
$C_{i,l/r}$ satisfy condition 
(H1), concluding (L1). 

Next, we check (L2). Once w set 
$e_1^{+}=(a, sb), e_1^-=(1,b+(1-b)s)$, $e_2^-=(-a,sb)$, $e_2^+=(-1,b+(1-b)s)$ as in Figure \ref{fig C shape}, Condition (L2) implies that 
$$
2 |(1-a,b+s(1-2b))|=
|e_1^+-e_1^-|+|e_2^+-e_2^-|< |e_1^+-e_2^-|+|e_2^+-e_1^-|=2+2a.$$
For this inequality to hold we require that 
$$|s(2b-1)-b|<2\sqrt{a},$$
for every $s\in [0,1].$
Notice that the maximum of the left-hand side is $\max(b, 1-b)$. Then, assuming that 
\begin{equation}\label{max b 1-b}
\max(b,1-b)<2\sqrt{a},
\end{equation}
(L2) follows for $e_i^\pm$ corresponding to transport rays. Since the inequality in \eqref{max b 1-b} is strict, then we choose $\Delta s_i$ small enough so that the convex sets 
$C_{i,l/r}$ satisfy inequality (L2) 
for every sequence of points $\{e_k^\pm\}$ arbitrary on $\partial C_{i_k,l/r}$.
Hence, under the assumption \eqref{max b 1-b} and thanks to Theorem \ref{Theorem 4.5}, Problem \eqref{LGP} has a unique solution. 
\end{example}

Here comes the most general instance of data we consider in this paper. Namely, we will extend the result of Proposition \ref{Prop. nonconvex case} to the case when the boundary datum $g$ is monotone on some arcs with negative curvature. 
On the way, we need to introduce the
following assumptions.\\ \\
$\bullet$\,\,\,{\bf{Condition (A1)}}. The domain $\Omega$ can be decomposed into disjoint sets $C_i$ ($ i \in I_C$), $E_i$ ($i \in I_E$) and, $X_i$ ($i \in I_X$)
such that $g$ is constant on $\partial X_i \cap \partial\Omega$ and, we have the following:\\

(1) \,\, For every $i \in I_C$, $C_i$ is convex. In addition, the family of open sets $\{C_i^\circ: i\in I_C\}$ satisfies the assumption (L1). \\ 

(2) \,\,For every \,$i \in I_E$, $\partial E_i \cap \partial\Omega$\, is the sum of 
two open arcs \,$E_i^+$\, and \,$E_i^-$ such that at least one of them is not convex and, 
$g$ is strictly increasing on ${E_i}^+$ and it is strictly decreasing on ${E_i}^-$ with $TV(g_{|{E_i}^+}) =  TV(g_{|{E_i}^-})$.  
\begin{remark}
Now, it is clear that the domain $D$ defined in Remark \ref{re-doD} may satisfy (A1) for a proper choice of $g$, even if it is not constant on the arc $\{ (x_1, x_2):\ x_1 \in (-1,1), \ x_2= \sqrt{1-x_1^2}\}$.
\end{remark}

In order to construct a solution to (\ref{LGP}), we apply our usual strategy, namely, we introduce a transport map $\bbT$. First, we  define $\bar\bbT: {\Gamma}^+ \cup {\chi}^+ \mapsto {\Gamma}^- \cup {\chi}^-$. Since condition (A1) implies that $f(\bigcup_{i\in I_E}{E}_i^\pm) =0,$ 
the conclusion of Lemma \ref{zer-bry} is valid. Hence, for any $x^+\in  {\Gamma}^+ \cup {\chi}^+ $, we may define $\bar\bbT(x^+)$ by formula  (\ref{df-TTi}). A new situation arises when we want to define the transportation map on $E^+_i$, $i\in I_E$. We proceed as follows:
$$
\mbox{If }\,\,x^+ \in E_i^+,\mbox{ then we set }\,
\tilde{\bbT}^i(x^+):=
x^-, 
\mbox{  where}\,\,\,x^- \in E_i^-\,\,\,\mbox{is such that}\,\,\,f(\arc{\,x^+\,x^-})=0.
$$
This map is well defined, thanks to the fact that $g$ is strictly monotone on $E^-_i.$ Finally, we define $\bbT: 
\Gamma^+\cup \chi^+\cup (\cup_{i\in I_{E}} E_i^+)\mapsto 
\Gamma^-\cup \chi^-\cup (\cup_{i\in I_{E}} E_i^-)$ as follows:
\begin{equation}\label{df-bartildeT}
\bbT(x^+)=
\begin{cases}
\bar \bbT(x^+) & \hbox{for }\,\,\,x^+ \in {\Gamma}^+ \cup {\chi}^+ ,\\
\tilde{\bbT}^i(x^+) &  \hbox{for }\,\,\, x^+ \in E_i^+,\ i\in I_E.
\end{cases}
\end{equation}

Subsequently, we will denote by 
${\bf{T}}^{[-1]}$ the inverse map of ${\bf{T}}$.\\


Now, we continue stating further conditions on the data.\\ \\
$\bullet$ \,\,{\bf{Condition (A2)}}.\,\,
For all points $x^+ \in {E_i}^+$\, and \,$x^- \in {E_i}^-$,
we have $]x^+,x^-[ \subset \Omega$. We note that $x^-$ here is an arbitrary point on $E_i^-$, so it is not necessarily the image of $x^+$ by $\tilde{\bbT}^i$ (see the difference with (H2)).\\\\
$\bullet$ \,\,{\bf{Condition (A3)}}. For any two finite sequences of points $\{e_k^\pm\}_{1 \leq k \leq m}$
such that ${e^\pm_{k}} \in {\chi^{i_k}}^\pm \cup {\Gamma^{i_k}}^\pm$ (for some $i_k \in I_C$) or \,${e^\pm_{k}} \in {E^\pm_{i_k}}$ (for some $i_k \in I_E$), with $i_k \neq i_{k^\prime}$ for all $k \neq k^\prime$ such that $\{i_k,\,i_{k^\prime}\} \subset I_C$ or $\{i_k,\,i_{k^\prime}\} \subset I_E$, 
we have the following inequality: 
$$
\sum_{k=1}^m  |{e_k}^+ - {e_k}^-| <
\sum_{k=1}^{m-1} |e^+_{k} - e^-_{k+1}| + |e_{m}^+ - e_{1}^-|. $$
Notice that this condition (A3) is just a generalization of the assumption (L2), since now we need also to guarantee that every set $E_i^+$ is transported to $E_i^-$, for all $i \in I_E$.
\begin{proposition}\label{Prop. nonconvex case general} 
Suppose that conditions (A1), (A2) and (A3) are satisfied. 
Let \,$\gamma$\, be an optimal transport plan between $f^+$ and $f^-$. Then, for \,$\gamma-$a.e. $(x^+,x^-)\in \spt (\gamma)$, 
we have $x^-={{{\bbT}}}(x^+)$. 
In other words, $\gamma=(Id,{\bf{T}})_{\#}f^+$ is the unique optimal transport plan.
\end{proposition}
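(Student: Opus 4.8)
The plan is to follow closely the two--part strategy of Proposition \ref{Prop. nonconvex case}, now treating the sets $E_i$ on the same footing as the convex pieces $C_i$ while accounting for the fact that the $E_i$ are no longer convex. First the global optimal plan $\gamma$ is shown to decouple across the pieces of the decomposition (A1), and then it is identified on each piece separately.

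The first step is the decoupling: for any two distinct pieces $P,Q\in\{C_i:i\in I_C\}\cup\{E_i:i\in I_E\}$ one has $\gamma(\bar P\times\bar Q)=0$. The argument is essentially that of Steps 1.1--1.3 of Proposition \ref{Prop. nonconvex case}. Its engine is the vanishing net flux across the boundary of every piece, namely $f(\partial C_i)=0$ from Lemma \ref{zer-bry} together with $f(\partial E_i)=0$, which holds because (A1)(2) forces $TV(g|_{E_i^+})=TV(g|_{E_i^-})$ and hence $f(E_i^+\cup E_i^-)=0$. Assuming that some mass is moved from a piece $P_{i_1}$ into a different piece $P_{i_2}$, the zero--flux property lets me build a chain of pieces with $(e_k^+,e_{k+1}^-)\in\spt(\gamma)$, $e_k^+\in P_{i_k}$ and $e_{k+1}^-\in P_{i_{k+1}}$, $i_k\neq i_{k+1}$. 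When $I_C\cup I_E$ is finite some piece must recur, producing a loop consisting of pairwise distinct pieces; applying the transport--ray identity \eqref{transport ray} to the Kantorovich potential along this loop yields an inequality that contradicts (A3), which is designed precisely to rule out such loops of distinct pieces of either type. When $I_C\cup I_E=\mathbb{N}^\star$ I would instead run the mass--balance argument of Step 1.3, peeling off a fixed amount $c>0$ of transported mass at each stage while keeping $f_n^-(\partial\Omega)\geq c$, which is impossible once $nc>f^-(\partial\Omega)$.

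The second step is to identify $\gamma_i:=\gamma\LC(\bar P_i\times\bar P_i)$, which by the decoupling has marginals $f^+\LC\bar P_i$ and $f^-\LC\bar P_i$. On a convex piece $C_i$ the datum $g_i=g|_{\partial C_i}$ satisfies (H1)--(H3) by (A1)(1), so Proposition \ref{Uniqueness in the convex case} gives that $\gamma_i$ is the unique optimal plan and equals $(Id,\bbT^i)_\#(f^+\LC\bar C_i)$. The genuinely new point is the non--convex piece $E_i$, where the convex theory is unavailable and I would argue directly. By the decoupling every transport ray inside $E_i$ joins $E_i^+$ to $E_i^-$; since $f^+$ is nonatomic the set of multiple points is $f^+$--negligible by the argument of Lemma \ref{l-neg-N}, which uses only that transport rays do not meet at an interior point, so $\gamma_i$ is carried by the graph of a map $R_i:E_i^+\mapsto E_i^-$ with $R_{i\,\#}(f^+\LC E_i^+)=f^-\LC E_i^-$. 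Because distinct transport rays never meet at an interior point and, by (A2), each segment $]x^+,x^-[$ lies in $\Omega$ and is therefore a genuine chord separating the Jordan domain $\Omega$, two such chords cross in $\Omega$ if and only if their endpoints interlace on $\partial\Omega$; non--crossing thus forces $R_i$ to be strictly monotone (in fact order--reversing). A monotone measure--preserving map between the two arcs is unique, and the total--variation matching $\tilde\bbT^i$ is exactly such a map, so $R_i=\tilde\bbT^i$ and $\gamma_i=(Id,\tilde\bbT^i)_\#(f^+\LC E_i^+)$. Summing over $i$ gives $\gamma=(Id,\bbT)_\#f^+$, and since $\bbT$ is independent of $\gamma$ the optimal plan is unique.

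I expect the main obstacle to be exactly the identification on the non--convex pieces $E_i$. In the convex setting the monotonicity, and hence optimality, of the candidate map was obtained from convexity together with the cyclical--monotonicity condition (H3); inside a single $E_i$ there is no competing arc to escape to, so no (H3)--type input is available, and one must instead convert the purely topological non--crossing of transport rays into monotonicity of $R_i$. This conversion is legitimate only because condition (A2) guarantees that the relevant segments are chords of the Jordan domain $\Omega$; keeping careful track of this, and of the fact that at least one of $E_i^\pm$ may be non--convex so that the usual convexity arguments cannot be invoked, is the delicate part of the proof.
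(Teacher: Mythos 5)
Your proposal is correct and follows essentially the same route as the paper: decoupling $\gamma(\bar P\times\bar Q)=0$ for distinct pieces via the zero-flux/loop argument of Proposition \ref{Prop. nonconvex case} with (A3) playing the role of (L2), identification on the convex pieces via the convex theory, and identification on each $E_i$ using (A2) to make transport rays non-crossing chords of $\Omega$. The only cosmetic difference is on the $E_i$ pieces: you first extract a monotone map $R_i$ and invoke uniqueness of the monotone measure-preserving rearrangement, whereas the paper argues directly that any $(x^+,x^-)\in\spt(\gamma)\cap(E_i^+\times E_i^-)$ with $x^-\neq\tilde{\bbT}^i(x^+)$ yields, by mass balance, an interlacing pair in $\spt(\gamma)$ whose rays must then cross by (A2) --- the same underlying idea.
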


\begin{proof}
Notice that by definition, we have $f(\partial E_i) =0$, for all $i\in I_E$. As a result, the argument of Lemma \ref{zer-bry} yields that $f(\partial C_i)=0$, for all $i\in I_C$.

The assumption (A3) here corresponds to condition (L2), which played the key role in the proof of Proposition \ref{Prop. nonconvex case}. Hence, the same argument used there
shows that $\gamma(C_i \times E_j)=0$, $i \in I_C$ and $j \in I_E$ (resp. $\gamma(C_i \times C_j)=\gamma(E_i \times E_j)=0$, for all $i \neq j \in I_C$ or $i \neq j \in I_E$). Similarly to Proposition \ref{Prop. nonconvex case}, one can see that $\gamma \res [C_i \times C_i]$ (resp. $\gamma \res [E_i \times E_i]$) is an optimal transport plan between its own marginals $\Bar{f}_i^+:=f^+\res \partial C_i$ and $\Bar{f}_i^-:=f^-\res\partial C_i$ (resp.  $\tilde{f}_i^+:=f^+\res \partial E_i$ and $\tilde{f}_i^-:=f^-\res \partial E_i$), for all $i \in I_C$ (resp. $i \in I_E$). From Proposition \ref{Prop. convex case}, we infer that 
 \begin{equation} \label{eq.4.5.1}
 \gamma \res [C_i \times C_i]=(Id, \bar T)_\# \bar f_i^+,\,\,\,\,\,\mbox{for every}\,\,\,i \in I_C.
 \end{equation}
Now, we claim that for all $(x^+,x^-) \in \spt(\gamma) \cap (E_i^+ \times E_i^-)$ we have \,$x^-=\Tilde{T}(x^+)$. Assume that this is not the case. Then, there must be a couple $(x_1^+,x_1^-) \in \spt(\gamma) \cap (E_i^+ \times E_i^-)$ such that $x^+ < x_1^+$ and $x_1^->x^-$, since otherwise we get a contradiction with the mass balance. Thanks to  assumption (A2), we see that the transport rays $]x^+,x^-[$ and $]x_1^+,x_1^-[$ intersect, which is a contradiction. Then, we also have the following:
\begin{equation} \label{eq.4.5.2}
\gamma \res [E_i \times E_i] = (Id, \tilde\bbT)_\# \tilde f_i^+,\,\,\,\,\,\mbox{for every}\,\,\,i \in I_E.
\end{equation}
Combining \eqref{eq.4.5.1} and \eqref{eq.4.5.2}, we get that 
$\gamma = (Id, \bbT)_\#f^+$. Hence, $\gamma$ is unique {because it is supported on uniquely defined graph of $\bbT$} and its first marginal equals $f^+$. This concludes the proof. 
\end{proof}

\begin{proposition} \label{existence & uniqueness of an optimal flow in the nonconvex case}
Assume that (A1), (A2) $\&$ (A3) hold. 
Then, Problems \eqref{Beckmann} and \eqref{Kantorovich} have the same minimal value. Moreover, $v_\gamma$ (where $\gamma=(Id,{\bf{T}})_{\#}f^+$) is the unique optimal flow in Problem \eqref{Beckmann}. 
\end{proposition}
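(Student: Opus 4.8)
The plan is to follow closely the argument of Proposition \ref{existence & uniqueness of an optimal flow in the nonconvex case L}, replacing the role of Proposition \ref{Prop. nonconvex case} by Proposition \ref{Prop. nonconvex case general}. I set $\gamma=(Id,\bbT)_{\#}f^+$, which by Proposition \ref{Prop. nonconvex case general} is the unique optimal transport plan for \eqref{Kantorovich}. First I would recall that the inequality $\eqref{dual}\leq\eqref{Beckmann}$ holds on any bounded domain, convex or not, as already observed in Section \ref{sec:Prelim}.

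Next I would check that the vector field $v_\gamma$ defined by \eqref{flow} is a well-defined element of $\mathcal{M}(\overline{\Omega},\mathbb{R}^2)$ and is admissible in \eqref{Beckmann}. Since $\gamma$ is concentrated on the graph of $\bbT$, the measure $v_\gamma$ is supported on the segments $[x^+,\bbT(x^+)]$; these segments lie in $\overline{\Omega}$ because $]x^+,\bbT(x^+)[\subset\Omega$ for $f^+$-a.e. $x^+$. On the convex blocks $C_i$ this is the content of (H2) guaranteed by (A1)(1), while on the arcs $E_i^+$ it follows from (A2) applied to the pair $(x^+,\tilde{\bbT}^i(x^+))$. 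The divergence constraint $\nabla\cdot v_\gamma=f$ then follows from the standard computation in optimal transport, which uses only that the transport segments remain in $\overline{\Omega}$.

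With $v_\gamma$ admissible I would close the chain of (in)equalities $\int_{\overline{\Omega}}|v_\gamma|\leq\int_{\overline{\Omega}\times\overline{\Omega}}|x-y|\,\mathrm{d}\gamma=\eqref{Kantorovich}=\eqref{dual}\leq\eqref{Beckmann}\leq\int_{\overline{\Omega}}|v_\gamma|$, where the first equality is the optimality of $\gamma$, the second is Kantorovich duality (valid for the Euclidean cost on any bounded domain), and the last inequality holds because $v_\gamma$ is admissible. Hence all quantities coincide, which proves $\eqref{Beckmann}=\eqref{Kantorovich}$ together with the optimality of $v_\gamma$.

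Finally, for uniqueness I would invoke, exactly as in the proof of Proposition \ref{existence & uniqueness of an optimal flow in the nonconvex case L}, the adaptation of \cite[Theorem 4.13]{Santambrogio} to non-convex domains given in \cite[Proposition 2.6 (3)]{SD}: since $\eqref{Beckmann}=\eqref{Kantorovich}$, every optimal vector field $v$ of \eqref{Beckmann} can be written as $v=v_{\gamma^\prime}$ for some optimal transport plan $\gamma^\prime$ of \eqref{Kantorovich}. But Proposition \ref{Prop. nonconvex case general} guarantees that the optimal plan is unique and equals $\gamma$, so $v=v_\gamma$. The main obstacle is the well-definedness step: unlike the convex situation of Section \ref{Section Convex case}, here convexity of $\Omega$ cannot be used to keep transport rays inside $\overline{\Omega}$, so one must argue carefully that (A2) — which controls precisely the rays across the non-convex arcs $E_i$ — together with (H2) on the convex blocks suffices to make $v_\gamma$ a genuine measure on $\overline{\Omega}$ and admissible in \eqref{Beckmann}.
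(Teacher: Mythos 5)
Your proposal is correct and takes essentially the same route as the paper: the paper's own proof simply states that it is identical to the proof of Proposition \ref{existence & uniqueness of an optimal flow in the nonconvex case L}, which is precisely the argument you reproduce — well-definedness and admissibility of $v_\gamma$ from the ray conditions ((H2) on the convex blocks via (A1), (A2) on the arcs $E_i^\pm$), the duality chain yielding $\eqref{Beckmann}=\eqref{Kantorovich}$ and optimality of $v_\gamma$, and uniqueness via the adaptation of \cite[Theorem 4.13]{Santambrogio} to non-convex domains in \cite[Proposition 2.6 (3)]{SD} combined with the uniqueness of the optimal plan from Proposition \ref{Prop. nonconvex case general}. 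There are no gaps; your explicit attention to why the transport segments stay in $\overline{\Omega}$ is exactly the point the paper leaves implicit in its ``same proof'' reference.
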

\begin{proof}
The proof is exactly the same as the one for Proposition \eqref{existence & uniqueness of an optimal flow in the nonconvex case L}. $\qedhere$
\end{proof}
Finally, we get the following result:
\begin{theorem} \label{Theorem 4.7}
Assume that (A1), (A2) $\&$ (A3) hold and that $g$  
is piecewise monotone. Then, Problem \eqref{LGP} has a unique solution. 
\end{theorem}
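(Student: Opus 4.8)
The plan is to mirror the proof of Theorem \ref{Theorem 4.5} almost verbatim, replacing its input Proposition \ref{existence & uniqueness of an optimal flow in the nonconvex case L} by the more general Proposition \ref{existence & uniqueness of an optimal flow in the nonconvex case}, and then transporting the conclusion back to the least gradient problem through the equivalence of Proposition \ref{t-equiv}. Under the standing assumption that $g$ is piecewise monotone together with conditions (A1), (A2), (A3), Proposition \ref{existence & uniqueness of an optimal flow in the nonconvex case} already provides that $\eqref{Beckmann}=\eqref{Kantorovich}$ and that $v_\gamma$, with $\gamma=(Id,{\bf T})_{\#}f^+$, is the unique minimizer of the Beckmann problem \eqref{Beckmann}. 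So the existence and uniqueness statements are reduced to a single verification at the boundary.

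The one point that must still be checked is that this minimizer charges no mass to $\partial\Omega$, i.e. $|v_\gamma|(\partial\Omega)=0$, since only then does Proposition \ref{t-equiv}(3) apply. I would argue exactly as in \eqref{r-pr3.15}: from the definition \eqref{Transport density definition} of the transport density one has
\begin{equation*}
|v_\gamma|(\partial\Omega)=\sigma_\gamma(\partial\Omega)=\int_{\partial\Omega}\mathcal{H}^1\big([x,{\bf T}(x)]\cap\partial\Omega\big)\,\mathrm{d}f^+(x).
\end{equation*}
For $f^+$-a.e.\ $x$ the segment $[x,{\bf T}(x)]$ is one of the transport rays carried by ${\bf T}$, and its open part lies in $\Omega$: if $x\in\Gamma^+\cup\chi^+$ this is the (H2) clause contained in (A1)(1), while if $x\in E_i^+$ for some $i\in I_E$ it is precisely condition (A2). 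Hence $[x,{\bf T}(x)]\cap\partial\Omega$ reduces to (at most) its two endpoints, which has zero $\mathcal{H}^1$ measure, so the integral vanishes.

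With $|v_\gamma|(\partial\Omega)=0$ established, Proposition \ref{t-equiv}(3) produces $u\in BV(\Omega)$ solving \eqref{LGP} with $v_\gamma=R_{\frac{\pi}{2}}Du$, giving existence. For uniqueness I would run the equivalence backwards: any solution $u$ of \eqref{LGP} yields, by Proposition \ref{t-equiv}(2), a minimizer $R_{\frac{\pi}{2}}Du$ of \eqref{Beckmann}; since that minimizer is unique and equal to $v_\gamma$, the measure $Du$ is uniquely determined, and the boundary trace $g$ fixes the additive constant, so $u$ is unique. The entire weight of the argument therefore rests on the computation $|v_\gamma|(\partial\Omega)=0$, which is where conditions (H2) (for the convex pieces) and (A2) (for the nonconvex pieces $E_i$) must be combined; beyond this bookkeeping of where each point of $\spt(f^+)$ lives, I expect no genuine obstacle, as everything else is a direct transcription of the convex-case and (L1)–(L2) reasoning.
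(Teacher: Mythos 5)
Your proposal is correct and follows essentially the same route as the paper: the paper's proof likewise invokes Proposition \ref{existence & uniqueness of an optimal flow in the nonconvex case} together with Proposition \ref{t-equiv}, reducing everything to the verification $|v_\gamma|(\partial\Omega)=0$, which it attributes exactly as you do to $\gamma=(Id,{\bf T})_{\#}f^+$ combined with (H2) on the convex pieces and (A2) on the pieces $E_i$. Your write-up merely spells out the transport-density computation (as in \eqref{r-pr3.15}) and the backward uniqueness step that the paper leaves implicit.
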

\begin{proof}
This will  immediately follow from Propositions \ref{existence & uniqueness of an optimal flow in the nonconvex case} $\&$ \ref{t-equiv} once we show that $|v_\gamma|(\partial\Omega)=0$. Indeed, this is implied by the fact that $\gamma=(Id,{\bf{T}})_{\#}f^+$
and by the assumptions (H2), (A2).
\end{proof}

Now, we show how the set of assumptions (A1)-(A3) works.

\begin{example}\rm
Here, we present an example of data $(\Omega, g)$, where a piece of $\partial\Omega$ has a negative curvature, nonetheless,  conditions (A1)-(A3) hold. Set \,$\Omega:=\{(x_1,x_2)\,:\,1 \leq x_1^2 + x_2^2 \leq R^2,\,\,x_2 \geq 0\}$.  For a fixed $\alpha\in (0,\pi/2)$, we define the boundary data $g$ in polar coordinates as follows:
$$g(r,\theta)=\begin{cases}
    \dfrac{\theta}{\alpha} \quad & r\in\{1,R\},\,\,\theta\in ]0,\alpha], \\
    1 & r\in \{1,R\},\,\,\theta\in ]\alpha,\pi-\alpha],\\
\dfrac{\pi-\theta}{\alpha} &r\in \{1,R\},\,\,\theta\in ]\pi-\alpha,\pi[,\\
    0 & 1\leq r\leq R,\,\,\theta\in \{0,\pi\}.\end{cases}$$
As a result, $f^+= f^+_1 + f^+_2$\, and \,$f^-=  f^-_1 +  f^-_2$, where
$$
f^+_1 =\frac{1}{\alpha} \cH^1\LC \{(1,\theta): \theta\in [0,\alpha]\},\qquad f^-_1=\frac{1}{R\alpha}\cH^1\LC \{(R,\theta):\theta\in [0,\alpha]\},
$$
$$
f^+_2=\frac{1}{R\alpha}\cH^1\LC \{(R,\theta):\theta\in [\pi-\alpha,\pi]\},\qquad 
f^-_2=\frac{1}{\alpha}\cH^1\LC \{(1,\theta):\theta\in [\pi-\alpha,\pi]\}.
$$\\    
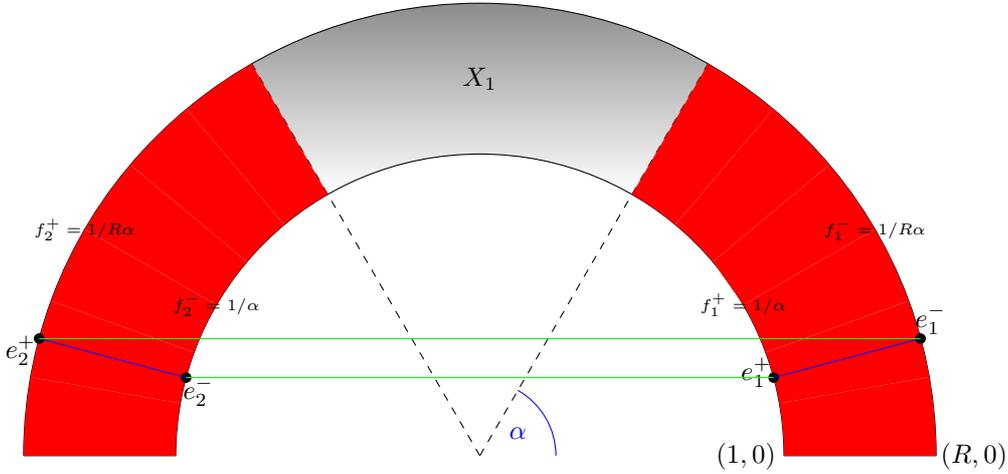
\begin{figure}[h]
\begin{tikzpicture}
    \shade (60:4) 
       -- (60:6) arc (60:120:6) 
       -- (120:4) arc (120:60:4);     
  \draw (6,0) arc (0:180:6);
  \draw (4,0) arc (0:180:4);
  \draw (4,0)--(6,0);
  \draw (-4,0)--(-6,0);
  \draw[dashed] (0,0)--(60:6);
  \draw[dashed] (0,0)--(120:6);
  \draw[blue] (1,0) arc (0:60:1);
  \node[blue] at (0.5,0.3) {$\alpha$};
  \node at (3.5,0) {$(1,0)$};
  \node at (6.5,0) {$(R,0)$};
  \fill[red,opacity=0.1] (0:4)--(0:6) arc (0:10:6)--(10:4) arc (10:0:4);
  \fill[red,opacity=0.2] (10:4)--(10:6) arc (10:20:6)--(20:4) arc (20:10:4);
  \fill[red,opacity=0.4] (20:4)--(20:6) arc (20:30:6)--(30:4) arc (30:20:4);
  \fill[red,opacity=0.6] (30:4)--(30:6) arc (30:40:6)--(40:4) arc (40:30:4);
  \fill[red,opacity=0.8] (40:4)--(40:6) arc (40:50:6)--(50:4) arc (50:30:4);
  \fill[red,opacity=1] (50:4)--(50:6) arc (50:60:6)--(60:4) arc (60:50:4);

  \fill[red,opacity=0.1] (180:4)--(180:6) arc (180:170:6)--(170:4) arc (170:180:4);
  \fill[red,opacity=0.2] (170:4)--(170:6) arc (170:160:6)--(160:4) arc (160:170:4);
  \fill[red,opacity=0.4] (160:4)--(160:6) arc (160:150:6)--(150:4) arc (150:160:4);
  \fill[red,opacity=0.6] (150:4)--(150:6) arc (150:140:6)--(140:4) arc (140:150:4);
  \fill[red,opacity=0.8] (140:4)--(140:6) arc (140:130:6)--(130:4) arc (130:140:4);
  \fill[red,opacity=1] (130:4)--(130:6) arc (130:120:6)--(120:4) arc (120:130:4);


 \fill (15:4) circle (2pt); \node at (17:3.8) {$e_1^+$};
\fill (15:6) circle (2pt); \node at (17:6.2) {$e_1^-$};
 \fill (165:4) circle (2pt); \node at (167:3.8) {$e_2^-$};
\fill (165:6) circle (2pt); \node at (167:6.2) {$e_2^+$};
 \draw[blue] (15:4)-- (15:6);
 \draw[blue] (165:4)-- (165:6);
 \draw[green] (15:4)--(165:4);
 \draw [green] (15:6)-- (165:6);

 \node at (30:4) {\tiny $f_1^+=1/\alpha$};
  \node at (150:4) {\tiny $f_2^-=1/\alpha$};
  \node at (30:6) {\tiny $f_1^-=1/R\alpha$};
  \node at (150:6) {\tiny $f_2^+=1/R\alpha$};
  \node at (90:5) {$X_1$};
\end{tikzpicture}
\caption{Example with negative curvature}
\label{Circular C shape}
\end{figure}
In this case, a part of $\partial\Omega$ has negative curvature, so in order to prove the existence of a solution to \eqref{LGP}, we decompose $\Omega$ into subsets verifying conditions (A1), (A2), (A3). We refer  to Figure \ref{Circular C shape}  for illustration. We let $X_1=\{(x,y)=(r\cos\theta,r\sin\theta):(r,\theta)\in (1,R)\times(\alpha,\pi-\alpha)\}$. 
Notice that for all $s\in [0,1]$, we have 
$$g(1,s\alpha)=g(R,s\alpha)=g(1,\pi-s\alpha)=g(R,\pi-s\alpha)=s.$$

Let $0= s_0 <s_1<s_2<\cdots<s_{n-1}<s_n=1$ be a partition of $[0,1]$. We set  $\Delta s_i=s_i-s_{i-1}$. We define $E_{i,r}$ in polar coordinatinates by formula 
$$
E_{i,r}=\{(r,\theta): r\in (1,R), \theta\in (s_{i-1}\alpha, s_i\alpha)\}, \qquad i=1, \ldots, n.
$$
By definition, $E_{i,l}$ is the symmetric image of $E_{i,r}$ with respect to the vertical coordinate axis.
The regions $E_{i,l/r}$ are shaded in red in Figure \ref{Circular C shape}, so $\Omega\backslash[ (\cup_{i=1}^n E_{i,r})\cup (\cup_{i=1}^n E_{i,l})]=X_1$.
Notice that $\partial E_{i,r}\cap \partial \Omega$ can be decomposed into two arcs (in polar coordinates):
\begin{align*}
&E_{i,r}^+=(1,s_{i-1}\alpha)\arc{\ (}1,s_{i}\alpha),\quad E_{i,r}^-
=(R, s_{i-1}\alpha\arc{)\ (}
R, s_i\alpha).
\end{align*}
By symmetry, we decompose $\partial E_{i,l}\cap \partial \Omega$. We choose $\Delta s_i$ small enough so that any line segments from $E_{i,l/r}^+$ to $E_{i,l/r}^-$ lies in $\Omega$. We then get that $\partial E_{i,l/r}$ satisfy conditions (A1)-(A2). 

Now, we check (A3). We begin with
$e_1^{+}=(1,s\alpha),\, e_1^-=(R,s\alpha)$,\, $e_2^-=(1,\pi-s\alpha)$, $e_2^+=(R,\pi-s\alpha)$.   Condition (A3) requires that for every $s\in [0,1]$, one has 
$$2(R-1)=|e_1^+-e_1^-|+|e_2^+-e_2^-|< |e_1^+-e_2^-|+|e_2^+-e_1^-|=2\cos (s\alpha)+ 2R \cos(s\alpha).$$
This leads to 
the following relationship
\begin{equation}\label{df-al-R}
\cos \alpha>\dfrac{R-1}{R+1}.
\end{equation}
For such values of $\alpha$, (A3)  follows for $e_k^{\pm}$ corresponding to transport rays and, we can then further restrict $\Delta s_i$ even smaller so that the sets $E_{i,l/r}$ satisfy condition (A3) for every sequence of arbitrary points $\{e_k^\pm\}$ on $\partial E_{i_k,l/r}$. 

We conclude that if (\ref{df-al-R}) is satisfied, then we may use Theorem  \ref{Theorem 4.7} to deduce that  
Problem \eqref{LGP} has a unique solution.


\end{example}

\bigskip

Finally, we will also cover
the case when the 
inequality in the assumption (A3) becomes the equality.
To be more precise, we introduce the following relaxation of (A3):\\

\noindent $\bullet$ \,\,{\bf{Condition ($\widetilde{A3}$)}}. \,\, Let $\{e_k^\pm\}_{1 \leq k \leq m}$ (where $m \in \mathbb{N}$) be two finite sequences of points 
such that $e_{k}^\pm \in {\chi^{i_k}}^\pm \cup {\Gamma^{i_k}}^\pm$ (for some $i_k \in I_C$) or \,$e_{k}^\pm \in E_{i_k}^\pm$ (for some $i_k \in I_E$), with $i_k \neq i_{k^\prime}$ for all $k \neq k^\prime$ such that $\{i_k,\,i_{k^\prime}\} \subset I_C$ or $\{i_k,\,i_{k^\prime}\} \subset I_E$. Then,
we assume that we have the following  inequality: 
$$
\sum_{k=1}^m  |e_k^+ - e_k^-| \leq
\sum_{k=1}^{m-1} |e_{k}^+ - e_{k+1}^-| + |e_{m}^+ - e_{1}^-|. $$


We claim that under this weaker assumption ($\widetilde{A3}$), we can establish a version of Proposition \ref{Prop. nonconvex case general}.
\begin{proposition}\label{General case} 
Assume that (A1), (A2) $\&$ ($\widetilde{A3}$) hold and $g$ is piecewise monotone. 
Then, there exists an optimal transport plan $\gamma^\star$ such that $\gamma^\star(C_i \times C_j)=\gamma^\star(E_i \times E_j)=\gamma^\star(C_i \times E_j)=0$, for all $i,\,j$ (with \,$i \neq j$ if \,$\{i,j\} \subset I_C$\, or \,$\{i,j\} \subset I_E$). Moreover,  if (S) is satisfied
then 
$\gamma^\star:=(Id,{\bf{T}})_{\#}f^+$ is an optimal transport plan between $f^+$ and $f^-$. 
\end{proposition}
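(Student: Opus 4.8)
The plan is to produce $\gamma^\star$ by a \emph{block-diagonalization} of an arbitrary optimal plan; the passage from the strict $<$ of (A3) to the non-strict $\le$ of ($\widetilde{A3}$) will cost us uniqueness, but not the non-communication structure. First I would record the mass balance on each block. Since $f(\partial C_i)=0$ and $f(\partial E_i)=0$ (the latter built into (A1)--(A2), compare Lemma \ref{zer-bry}), every piece $P$, equal to $\partial C_i\cap\partial\Omega$ or $\partial E_i\cap\partial\Omega$, satisfies $f^+(P)=f^-(P)$; moreover, by piecewise monotonicity and (A1) the pieces exhaust $\spt f^\pm$ up to an $|f|$-null set (the sets $\partial X_i\cap\partial\Omega$ carry no $|f|$ since $g$ is constant there). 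Hence on each $P$ there is a within-piece optimal plan for the marginals $f^+\!\res P$ and $f^-\!\res P$, and the measure $\gamma^\star:=\sum_{i\in I_C}\gamma_i^C+\sum_{i\in I_E}\gamma_i^E$ obtained by summing them is admissible in \eqref{Kantorovich} and satisfies $\gamma^\star(C_i\times C_j)=\gamma^\star(E_i\times E_j)=\gamma^\star(C_i\times E_j)=0$ for the stated index ranges by construction.

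The core of the argument is to show that this $\gamma^\star$ is \emph{globally} optimal. Feasibility gives $\eqref{Kantorovich}\le\int|x-y|\,d\gamma^\star=\sum_i(\text{within-piece optimal cost})$, so it remains to prove the reverse inequality, which I would obtain by a cycle-cancellation (rerouting) argument. Starting from any globally optimal $\gamma$, write $m_{ij}=\gamma(P_i\times P_j)$; the balance $f^+(P_i)=f^-(P_i)$ makes $(m_{ij})$ have equal row and column sums, so its off-diagonal part is a balanced nonnegative flow and decomposes into directed cycles of distinct pieces. For one such cycle $i_1\to\cdots\to i_r\to i_1$ of mass $\delta>0$, the gluing lemma provides a common parametrization $\omega\mapsto(e_1^+(\omega),\dots,e_r^+(\omega))$ of $\delta$-slices with $(e_k^+(\omega),e_{k+1}^-(\omega))\in\spt\gamma$; integrating the inequality ($\widetilde{A3}$), namely $\sum_k|e_k^+-e_k^-|\le\sum_{k}|e_k^+-e_{k+1}^-|$ read cyclically, against this parametrization shows that rerouting the slice onto the diagonal blocks $P_{i_k}\times P_{i_k}$ does not increase the cost. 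Replacing $\gamma$ by the rerouted plan preserves feasibility, does not increase cost, and strictly lowers the total off-diagonal mass. Consequently, minimizing the (lower semicontinuous) functional $\gamma\mapsto\sum_{i\neq j}\gamma(P_i\times P_j)$ over the weak-$*$ compact convex set of optimal plans yields a \emph{block-diagonal} optimal plan; its cost is at least $\sum_i(\text{within-piece optimal cost})$ blockwise, forcing $\int|x-y|\,d\gamma^\star=\eqref{Kantorovich}$ and hence the optimality of $\gamma^\star$. To keep the flow decomposition finite I would first treat finite $I_C,I_E$ and then pass to the limit through the finite-piece approximation together with Lemma \ref{l-conv}, exactly as in the proof of Proposition \ref{Prop. convex case}.

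For the final assertion, assume (S). On each convex piece $C_i$ the data $g_i$ satisfy (H1)-(H3) by (A1), and (S) holds, so Propositions \ref{Prop. convex case} and \ref{Uniqueness in the convex case} identify the within-piece optimal plan uniquely as $(Id,\bbT^i)_\#(f^+\!\res\partial C_i)$, cf. \eqref{eq.4.5.1}. On each non-convex piece $E_i$, condition (A2) forbids the transport rays $]x^+,x^-[$ and $]y^+,y^-[$ from crossing, so the within-piece plan must be $(Id,\tilde\bbT^i)_\#(f^+\!\res\partial E_i)$ as in \eqref{eq.4.5.2}. Summing over $i$ and recalling the definition \eqref{df-bartildeT} of $\bbT$ gives $\gamma^\star=(Id,\bbT)_\#f^+$, so this particular block-diagonal plan is the announced one.

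I expect the main obstacle to be the measure-level rerouting: turning the finite cycle of the flow decomposition into an actual cost-non-increasing modification of the plan requires gluing the cross-block $\delta$-slices into a single cyclically coupled family and integrating ($\widetilde{A3}$) pointwise. Because ($\widetilde{A3}$) is only an inequality, each rerouting is cost-neutral rather than cost-decreasing, so one cannot reach a contradiction in a single step as in the strict case of Proposition \ref{Prop. nonconvex case general}; the off-diagonal mass must instead be driven to zero by the genuine minimization (equivalently, by iteration with a compactness argument). Verifying lower semicontinuity of the off-diagonal mass when block closures touch and $I_C\cup I_E$ is infinite is the other delicate point, which is precisely why I would reduce to finitely many pieces before passing to the limit.
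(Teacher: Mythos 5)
Your proposal is correct in outline, but it takes a genuinely different route from the paper. The paper never performs a cycle-cancellation on a fixed domain; instead it restores \emph{strictness}: each arc of $\spt(f)$ is replaced by a slightly pushed-in arc, producing domains $\Omega_n$ with $\overline{\Omega}_n\to\overline{\Omega}$ in the Hausdorff distance and data $f_n=(S_n)_{\#}f$, where $S_n$ slides mass along the segments $[x,\bbT(x)]$. On the perturbed configuration the relevant inequalities become the strict condition (A3) --- the triangle inequality becomes strict once the points leave $\partial\Omega$, and the degenerate collinear case is excluded by disjointness of the blocks, the Lipschitz regularity of $\partial\Omega$ and a diameter argument --- so Proposition \ref{Prop. nonconvex case} applies to the plans $\gamma_n$, and block-diagonality passes to the limit because the approximating blocks sit inside the original ones ($\tilde C_{i,n}\subset C_i$, $E_{j,n}\subset E_j$). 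Both proofs then settle the (S)-part identically, by within-block identification (Propositions \ref{Prop. convex case} and \ref{Uniqueness in the convex case} on the $C_i$, the (A2) non-crossing argument on the $E_i$). What your route buys: it works directly on $\Omega$, needs no graph representation of the arcs and no transversality to define $S_n$, and it confronts the cost-neutral cycles head-on. What the paper's route buys: the maps $S_n$ constructed in this proof are reused verbatim in Proposition \ref{General} to remove assumption (S), so the geometric construction pays twice.

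Two steps of your sketch need repair, though both are fixable. First, the ``cyclically coupled family'': build it as an \emph{open-chain} disintegration, starting from $e_1^+$ and alternately applying an arbitrary coupling of the incoming and outgoing mass at each node and the disintegration of the corresponding cross-block slice $\beta_k$; the constraints you must reproduce, namely $(e_k^+,e_{k+1}^-)\sim\beta_k$, form a chain and not a closed loop, so no cyclic-consistency obstruction arises. Second, the minimization argument is flawed as literally stated: for closed blocks, $\gamma\mapsto\gamma(P_i\times P_j)$ is upper, not lower, semicontinuous under weak convergence, so the off-diagonal mass need not attain its minimum on the set of optimal plans. But you do not need it: with finitely many pieces, decompose the entire off-diagonal circulation into finitely many simple cycles, split the block restrictions of $\gamma$ proportionally among these cycles, and reroute all of them simultaneously; this produces a block-diagonal optimal plan in one step, with no compactness argument. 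The infinite case is then easier than you fear: the within-piece optimal plans $\gamma_i$ do not depend on $n$, so the finite-piece plans $\sum_i\gamma_i$ converge in total variation to $\gamma^\star$, and block-diagonality is automatic while optimality of the limit follows from Lemma \ref{l-conv}.
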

\begin{proof} 
We apply the argument used in the proofs of
Propositions \ref{Prop. convex case with finite arcs} and \ref{Prop. convex case}. 
For this purpose we will construct an increasing sequence of  domains $\Omega_n$ whose closures converge to $\overline{\Omega}$ in the Hausdorff distance as well as a sequence of functions $g_n$ defined on $\partial\Omega_n$ such that for every $n$, the boundary $\partial\Omega_n$ can be decomposed into sets $\tilde{C}_{i,n}$ and $E_{i,n}$ 
satisfying condition (A3) and so that we have $\partial \tilde{C}_{i,n} \rightarrow \partial {C}_{i}$ and $\partial {E}_{i,n} \rightarrow \partial {E}_{i}$ in the Hausdorff sense. Here, $\tilde{C}_{i,n}$ is not necessarily convex.

{\it Step 1.} Fix $n \in \mathbb{N}^\star$.
{Let us suppose that  $\alpha$ is any of the arcs $\chi^i_j$ ($j\in I^i_\chi$, $i \in I_C$), $\Gamma^{i\pm}_j$ ($j\in I^i_\Gamma$, $i \in I_C$) or $E_i^\pm$ ($i\in I_E$).} 
We may assume after an appropriate choice of the coordinate system that $\alpha$ is the graph of a  Lipschitz continuous function $h_\alpha:[-r_\alpha, r_\alpha]\mapsto \bR$, i.e. $\alpha = G(h_\alpha)$, where $h_\alpha(-r_\alpha) = 0 = h_\alpha(r_\alpha)$ and $G(h_\alpha)$ denotes the graph of $h_\alpha$.
We adopt a convention requiring $h_\alpha\ge 0$ when $\alpha$ is convex (i.e. $\conv \alpha\subset \Omega$), this
implies that $h_\alpha$ is 
concave. Moreover, $h_\alpha\le 0$ when $\alpha$ is concave, so $h_\alpha$ is a convex function. In particular, there is a $r>0$ such that $\{ (x,y): \ x\in [-r_\alpha, r_\alpha], y\in [h_\alpha(x), h_\alpha(x) +r ]\}\cap \Omega=\emptyset$.

Our construction depends on the curvature of $\alpha$. First, we consider $\alpha$ which is not a line segment. Then, for any natural $n\ge1$,
we define $\alpha^n := G((1- \frac \kappa n) h_\alpha)$, where $\kappa =1$ when $\alpha$ is convex and $\kappa =-1$ when $\alpha$ is concave. When $\alpha$ is convex, then this construction implies that $\alpha^n$ is convex and  $\alpha^n\subset \conv(\Bar \alpha)\subset\bar\Omega.$ In case $\alpha= E_i^+$ (resp. $\alpha= E_i^-$) is concave, since the distance between $E^+_i$ and $E^-_i$ is positive we conclude that $E^+_{i,n}:=\alpha^n$ (resp. $E^-_{i,n}:=\alpha^n$) is concave and $\alpha^n\subset E_i$ for sufficiently large $n$.

When  $\alpha$ happens to be a line segment we proceed differently. We take a strictly convex function $\eta_\alpha:[-r_\alpha,r_\alpha] \mapsto (-\infty,0]$ such that $G(\eta_\alpha) \subset \Omega$ with $\eta_\alpha(\pm r_\alpha)=0$,  then we set $\alpha^n := G(\frac{\eta_\alpha}{n})$, $n\ge 1$.  
Hence, we conclude that $\alpha^n \subset \Omega$ for large $n\in\bN.$

After this preparation we will define the domain $\Omega_n.$
More precisely, the boundary $\partial \Omega_n$ is nothing else than $\partial\Omega$ but every arc $\,\alpha ={\chi_j^i}$
(resp. $\alpha = {\Gamma^i_j}^\pm$
or $\alpha=E_{i}^\pm$) in $\partial\Omega$
is replaced by the arc $\alpha^n$. We note that $\alpha^n$ and $\alpha$ have same endpoints and so $\partial \Omega_n$ is 
a closed curve (i.e. topologically a circle). In the same way, we define $\partial \tilde{C}_{i,n}$ as $\partial C_i$ but any arc $\,\alpha ={\chi_j^i}$
(resp. $\alpha = {\Gamma^i_j}^\pm$) in $\partial C_i$ is replaced by $\alpha^n$. Moreover, $E_{i,n}$ is the region bounded by $E_{i,n}^\pm$ and $\partial E_i \cap \Omega$. The sets $\Gamma^i_{j,n}$ and $\chi_{j,n}$ are defined in a natural way. It is clear that $\partial\Omega_n \rightarrow \partial \Omega$, $\partial \tilde{C}_{i,n} \rightarrow \partial {C}_{i}$ and $\partial {E}_{i,n} \rightarrow \partial {E}_{i}$ in the Hausdorff distance as $n\to \infty$. 

{\it Step 2.} Now, we  define maps $S_n: \partial\Omega 
\mapsto \partial\Omega_n$  
as follows. Since $g$ is piecewise monotone, we first consider points belonging to $U^+\cup U^-$ (see Definition \ref{d-ps-mono}). For such points, $S_n(x)$ is set to be the point of intersection of the line segment $[x,{\bf{T}}(x)]$ (resp. $[x,{\bf{T}}^{-1}(x)]$) with the boundary  $\partial\Omega_n$, which is the closest to $x$. We note that $S_n$ is well defined since the line segment $[x, \bbT(x)]$ (resp. $[x,{\bf{T}}^{-1}(x)]$) intersects $\Omega$ transversally and $\alpha_{n}$ is either strictly convex or strictly concave. Moreover, we see  that the inverse map $S^{-1}_n$ exists, for each $n\in\bN^\star$.

We extend $S_n$ to $\partial\Omega$
by setting $S_n=Id$ on $U_0$, ($U_0$ is from the Definition \ref{d-ps-mono}). 
Hence, if $x \in \alpha=\chi_j^i$ (resp. $x \in \alpha={\Gamma_j^i}^\pm$ or $x \in  \alpha=E_i^\pm$), then it is obvious that $S_n(x) \in \alpha^n$ for sufficiently large $n\in \bN$. In particular, we have $S_n(\partial {C}_{i,n})=\partial \Tilde{C}_{i,n}$ and $S_n(E_{i}^\pm)=E_{i,n}^\pm$.

Moreover, due to continuity of $\bbT$ and  $\bbT^{-1}$  it is not difficult to check that the mapping $S_n$ is continuous as well on $\partial\Omega$.
  
Once we fix any $x_0\in U_0$
we define the boundary data $g_n$ on $\partial\Omega_n$ by $g_n(x) = f_n(\arc{x_0\,x})$, where $f_n$ is defined as follows:
$$f_n={S_n}_{\#}f.$$ 
It is clear that $f_n(\partial \Tilde{C}_{i,n})=f(S_n^{-1}(\partial \Tilde{C}_{i,n}))=f(\partial C_i)=0$ and $f_n(\partial {E}_{i,n})=f(S_n^{-1}(\partial {E}_{i,n}))=f(\partial E_i)=0$.

{\it Step 3.} Since $f_n$ is defined only on $\partial\Omega_n$,  we extend it 
on \,$\overline{\Omega} \setminus \partial\Omega_n$, 
by formula $\bar f_n(A) = f_n(A\cap \partial\Omega_n)$ for any Borel set $A$ and
for all $n$.  Let $\bar f_n^+$ and ${\bar f}_n^-$ be the positive and negative parts of $\bar f_n$. 
Let $\gamma_n$ be any optimal transport plan between $\bar f_n^+$ and $\bar f_n^-$ on  \,$\overline{\Omega} \times  \overline{\Omega}$. Up to a subsequence, we know that $\gamma_n \rightharpoonup \gamma$ for some $\gamma \in \mathcal{M}^+(\overline{\Omega} \times\overline{\Omega})$. Yet, we see that $\bar f_n \rightharpoonup \bar f$, where $\bar f\LC \partial\Omega = f$ and $\spt\bar f \subset \partial\Omega$. Indeed, for any $\varphi \in C(\overline{\Omega})$, we have
$$
\lb \bar f_n,\varphi\rb= 
\lb f_n,\varphi\rb= \lb {S_n}_{\#}f,\varphi\rb=  \int_{\partial\Omega} \varphi(S_n(x))\,\mathrm{d}f(x)  \rightarrow  \int_{\partial\Omega} \varphi(x)\,\mathrm{d}f(x)=\lb f,\varphi\rb,$$
because $S_n(x)$ converges to $x$, for all $x \in \partial\Omega$. This follows immediately from 
%
the definition of $\Omega_n$ which assures us that
\begin{equation}\label{r-Hd-co}
\lim_{n\to\infty}\max\bigg\{|x-S_n(x)|\,:\,x\in \spt(f)\bigg\} =0.
\end{equation} 
Hence, $(\Pi_x)_{\#}\gamma=f^+$ and $(\Pi_y)_{\#}\gamma=f^-$.
Similarly, as in the proof of Proposition \ref{Prop. convex case with finite arcs}, we infer that $\gamma$ is an optimal transport plan between $f^+$ and $f^-$. 

{\it Step 4.}
Now, we show that $g_n$  satisfies (A3).
Let $\{e_{k}^\pm\}_{1 \leq k \leq m}$ be two finite sequences of points such that 
${e_{k}}^\pm \in \partial \tilde{C}_{i_k,n} \cap \left(\bigcup_{j\in I^{i_k,n}_\Gamma} \Gamma^{i_k,n}_j\cup \bigcup_{j\in I^{i_k,n}_\chi} \chi^{i_k,n}_j\right)$ 
(for some $i_k \in I_C$) or \,${e_{k}}^\pm \in E_{i_k,n}^\pm$ (for some $i_k \in I_E$) with $i_k \neq i_{k^\prime}$ for all $k \neq k^\prime$ such that $\{i_k,\,i_{k^\prime}\} \subset I_C$ or $\{i_k,\,i_{k^\prime}\} \subset I_E$.

For every $1 \leq k \leq m$, let ${e_k^\prime}^\pm \in \spt(f^\pm)$ be such that $[{e_k^\prime}^+,{e_k^\prime}^-] \cap \partial\Omega_n=\{e_k^+,e_k^-\}$.
Thus, 
$$
\sum_{k=1}^{m} |e_k^+ - e_k^-|
=
\sum_{k=1}^m (|{e_k^\prime}^+ - {e_k^\prime}^-| - |e_k^+ - {e_k^\prime}^+| -|e_k^- - {e_k^\prime}^-|).$$\\
At the same time, due to the triangle inequality, we have 
\begin{align}\label{r410p}
\sum_{k=1}^{m-1}
|{e_k^\prime}^+ - e_{k+1}^{\prime -}| + |e_m^{\prime +}- {e_1^\prime}^-|
&\le
\sum_{k=1}^{m-1}
|{e_k^\prime}^+ - e_{k}^+| + |e_k^+ - e_{k+1}^-| + |e_{k+1}^--e_{k+1}^{\prime-}|\\
&\qquad \qquad+|e_m^{\prime+} - e_{m}^+| + |e_m^+ - e_{1}^-| + |e_{1}^- - {e_1^\prime}^-|.\nonumber
\end{align}
The inequality in (\ref{r410p}) is
strict  as soon as
there is at least one integer $k_0$ such that the points 
$e_{k_0}^{\prime+},\,e_{k_0}^+,\, e_{k_0+1}^-,\,e_{k_0+1}^{\prime-}$
are not co-linear. We proceed while assuming that this is the case. Since $e^{\prime\pm}_k\in \partial\Omega$, then ($\widetilde{\hbox{A3}}$) implies that we also have 
 $$\sum_{k=1}^{m} |{e_k^\prime}^+  - {e_k^\prime}^-| \leq  \,\,
\sum_{k=1}^{m-1}
|{e_k^\prime}^+ - e_{k+1}^{\prime-}| + |e_m^{\prime+}-{e_1^\prime}^-|.$$
Consequently, we infer that 
\begin{equation} \label{no transport outside} 
 \sum_{k=1}^{m} |{e_k}^+  - {e_k}^-| <  \,\,
\sum_{k=1}^{m-1}
|{e_k}^+ - {e_{k+1}}^-| + |{e_m}^+-{e_1}^-|.
\end{equation}

It remains to consider the case when for all $k=1,\ldots, m$
the points ${e_k^\prime}^+,\,e_{k}^+,\, e_{k+1}^-,\,e_{k+1}^{\prime-}$ are co-linear,  (we use the convention that $e^-_{m+1}\equiv e^-_1$ and ${e^\prime_{m+1}}^-\equiv {e^\prime}^-_1$). 
By definition 
${e_k^\prime}^+,\,e_{k}^+,\, e_{k}^-,\,{e_{k}^\prime}^-$ are co-linear, too. Consequently, all  points $\{e_k^\pm,\,{e_k^\prime}^\pm\}$ are co-linear. We claim that this observation combined with the fact that the sets $C_i$, $i \in I_C$, $E_j$, $j\in I_E$ are mutually disjoint, implies then the line segments $[e_k^+,e_k^-]$, $1 \leq k \leq m$, must be disjoint. Indeed, let us assume that there is a common point ${e_{k_1}^\prime}^+={e_{k_2}^\prime}^+ =:p \in \partial C_{i_{k_1}} \cap \partial C_{i_{k_2}}$ with $i_{k_1} \neq i_{k_2}$. We notice that ${e_{k_1}^\prime}^+={e_{k_2}^\prime}^-$ is impossible. 

We know that $]{e_{k_1}^\prime}^+,{e_{k_1}^\prime}^-[ \subset C_{i_{k_1}}$ and $]{e_{k_2}^\prime}^+,{e_{k_2}^\prime}^-[ \subset C_{i_{k_2}}$ while $C_{i_{k_1}}$ and $C_{i_{k_2}}$ are disjoint. Since point $p$ is by definition in the (relative) interior of both arcs $\partial C_{i_{k_1}}$ and $\partial C_{i_{k_2}}$, then due to the Lipschitz continuity of
$\Omega$ we see that $\partial C_{i_{k_1}}$ and $\partial C_{i_{k_2}}$ must coincide in a neighborhood of the common point $p$. 
As   a result, there will be always a triple bifurcation point $a \in \partial C_{i_{k_1}} \cap \partial C_{i_{k_2}}$, which contradicts again the Lipschitz regularity of $\partial\Omega$.

Let us take $m_-$, $m_+$ among points $e^\pm_k$, $k=1,\ldots, m$, such that 
$$
|m_+ - m_- | = \diam\, \{ e^+_k, e^-_k: 1 \leq k \leq m\}.
$$
Since the intervals $[e_k^+,e_k^-]$, $1 \leq k \leq m$, are disjoint, then we see that we have the following inequality:
$$ 
 \sum_{k=1}^{m} |{e_k}^+  - {e_k}^-| <|m_+- m_- |.
$$
At the same time every $x\in ]m_+, m_-[$ belongs to at least one interval $[ e^+_{k}, e^-_{k+1}]$. 
Hence, we get that
$$
|m_+ - m_-| \le \sum_{k=1}^{m-1}
|{e_k}^+ - {e_{k+1}}^-| + |{e_m}^+-{e_1}^-|.
$$
As a result (\ref{no transport outside}) follows.

{\it Step 5.}
 Thanks to \eqref{no transport outside}, one can show exactly as in the proof of Proposition \ref{Prop. nonconvex case}\,-\,{Step 1} that $\gamma_n$ transports all the mass on $\partial \tilde{C}_{i,n}$ (resp. $\partial E_{i,n}$) to itself. We note that here we are not interested in characterizing the restriction of $\gamma_n$ to $\partial \tilde{C}_{i,n} \times \partial \tilde{C}_{i,n}$, since we recall that $\tilde{C}_{i,n}$ is now no more convex and thus, (H1) is a priori not satisfied in $\tilde{C}_{i,n}$. From Step 3, we know that $\gamma_n \rightharpoonup \gamma$ and $\gamma$ is an optimal transport plan between $f^+$ and $f^-$. Hence, we infer that $\gamma$ also transports the mass from $\partial C_i$ (resp. $\partial E_i$) to itself. To see this, take two continuous functions $\varphi(x)$ and $\psi(y)$ over $\overline{\Omega}$ such that $\spt(\varphi) \subset \overline{{C}_{i}}$ and $\spt(\psi) \subset \overline{E_{j}}$. Yet, we have ${\tilde{C}_{i,n}} \subset {{C}_{i}}$ and $E_{j,n} \subset E_j$, for $n$ large enough.
 Hence, we get
 $$\int_{\overline{\Omega} \times \overline{\Omega}}\varphi(x)\, \psi(y)\,\mathrm{d}\gamma_n(x,y)=\int_{\overline{C_i} \times \overline{E_j}}\varphi(x)\, \psi(y)\,\mathrm{d}\gamma_n(x,y)=\int_{\overline{\tilde{C}_{i,n}} \times \overline{E_{j,n}}}\varphi(x)\, \psi(y)\,\mathrm{d}\gamma_n(x,y)=0$$
 Consequently, letting $n\to \infty$ we obtain
 $$\int_{\overline{\Omega} \times \overline{\Omega}}\varphi(x)\, \psi(y)\,\mathrm{d}\gamma(x,y)=0.$$
 \\
 But, $\varphi$ and $\psi$ are two arbitrary functions supported in $\overline{{C}_{i}}$ and $ \overline{E_{j}}$, respectively. Then, we infer that $\gamma(\partial C_i \times \partial E_j)=0$. In the same way, we show that $\gamma(\partial C_i \times \partial C_j)=0$ and $\gamma(\partial E_i \times \partial E_j)=0$, for all $i \neq j$. Recalling Proposition \ref{Prop. nonconvex case general}, we conclude the proof. $\qedhere$
\end{proof} 
Now, we show that the assumption (S) in Proposition \ref{General case} can be removed. We recall that we needed this condition in Section \ref{Section Convex case} because our approach was based on the approximation by strictly convex domains $\Omega_n$, where we used the projection map $P_n$ to the boundary $\partial\Omega$. However, one can use a more suitable map in our approximation, which does not take into account the presence of singular points on $\partial\Omega$ -- this is the map $S_n$ that we have just constructed in Proposition \ref{General case}. More precisely, we have:
\begin{proposition}\label{General}
Assume that (A1), (A2) and ($\widetilde{A3}$) hold and $g\in W^{1,1}(\partial\Omega)$ is piecewise monotone. Then, ${\bf{T}}$ is an optimal transport map from $f^+$ to $f^-$. 
\end{proposition}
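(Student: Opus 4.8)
The plan is to pin down precisely where the regularity hypothesis (S) entered and to verify that the transport-ray map $S_n$ built in Proposition~\ref{General case} renders it unnecessary. First I would record that the \emph{first} conclusion of Proposition~\ref{General case}---the existence of an optimal plan $\gamma$ between $f^+$ and $f^-$ with $\gamma(\partial C_i\times\partial C_j)=\gamma(\partial E_i\times\partial E_j)=\gamma(\partial C_i\times\partial E_j)=0$ for the relevant $i\neq j$---was established without ever using (S); it rests only on (A1), (A2), ($\widetilde{A3}$) and the $S_n$-approximation. Consequently $\gamma$ restricts to an optimal plan on each convex piece $C_i$ and on each $E_i$. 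On the non-convex pieces the identification $\gamma\res[\partial E_i\times\partial E_i]=(Id,\tilde{\bbT}^i)_\#\tilde f_i^+$ follows from (A2) together with the strict monotonicity of $g$ on $E_i^\pm$, exactly as in the proof of Proposition~\ref{Prop. nonconvex case general}, and this step uses no regularity of $\partial\Omega$. Thus everything reduces to identifying $\gamma$ on each convex piece $C_i$ without (S).

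Next I would explain why (S) is dispensable there. In Section~\ref{Section Convex case} the condition (S) was used \emph{only} to run the projection-based approximation: Lemma~\ref{l-repa} repartitioned $\partial\Omega$ away from singular points, and in Proposition~\ref{Prop. convex case with finite arcs} the approximating data were transported by $P_n$, whose inverse blows a singular point up into a whole arc (Lemma~\ref{Lemma 4.7}). By contrast, the strictly convex case, Proposition~\ref{strictly_convex_case}, needs only (H1) and (H3). The idea is therefore to approximate each convex piece $C_i$---on which $g_i$ satisfies (H1), (H2), (H3) by (A1)---by \emph{strictly} convex domains $C_{i,n}$, transporting the data along transport rays by $S_n$ rather than by $P_n$. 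Concretely I would bulge each boundary arc ${\chi^i_j},{\Gamma^i_j}^\pm$ into a strictly convex arc and, since $g_i$ is constant (so $f=0$) on $\partial C_i\cap\Omega$ and on the flat parts $F^i_j$, bulge those parts as well, so that the whole $C_{i,n}$ becomes strictly convex; then set $f_{i,n}:=(S_n)_\#(f\res\partial C_i)$ and let $g_{i,n}$ be its primitive. Because $S_n$ is defined through the segments $[x,\bbT(x)]$ it is continuous irrespective of singular points, and $S_n(x)\to x$ uniformly on $\spt f$.

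On each strictly convex $C_{i,n}$ the datum $g_{i,n}$ inherits (H1), and the strict convexity of the new arcs upgrades (H3) to the strict cyclical-monotonicity inequality, just as in Step~4 of Proposition~\ref{General case}; hence Proposition~\ref{strictly_convex_case} applies---with no appeal to (S)---and yields $\gamma_{i,n}=(Id,\bbT_{i,n})_\#f_{i,n}^+$. By Lemma~\ref{l-conv} a subsequence of $\gamma_{i,n}$ converges weakly to an optimal plan $\gamma_i$ between $f_i^+$ and $f_i^-$, and since $S_n(x)\to x$ and $\bbT_{i,n}(S_n(x))\to\bbT^i(x)$ one gets $\gamma_i=(Id,\bbT^i)_\#f_i^+$ exactly as in the limit passages of Propositions~\ref{Prop. convex case with finite arcs} and \ref{Prop. convex case}. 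When $C_i$ carries infinitely many arcs I would first truncate to finitely many of them by the measure-truncation device of Proposition~\ref{Prop. convex case} and relax (H3)$'$ to (H3) via Proposition~\ref{NEW Prop}; none of these auxiliary steps uses (S). Summing over the pieces then gives $\gamma=(Id,\bbT)_\#f^+$, and since $\bbT$ does not depend on $\gamma$ this is the unique optimal plan, so $\bbT$ is an optimal transport map.

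The step I expect to be the main obstacle is the strictly convex approximation of each $C_i$ by $S_n$ in the third paragraph, and in particular checking that the strict inequality in (H3) survives the perturbation uniformly in $n$. This is exactly the place where replacing $P_n$ by $S_n$ pays off: $S_n$ is manufactured from the transport rays and is therefore globally continuous with $S_n(x)\to x$ uniformly on $\spt f$, whereas $P_n^{-1}$ is multivalued precisely on the singular set---which is the sole reason (S) had to be imposed in Section~\ref{Section Convex case}.
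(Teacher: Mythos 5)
Your proposal is correct in substance and rests on the same central insight as the paper's proof: the obstruction behind (S) comes solely from the projection $P_n$, and it disappears once the data are pushed onto the approximating boundary along transport rays by $S_n$, because then $e_k^+$, $\bbT_n(e_k^+)$, $S_n^{-1}(e_k^+)$ and $\bbT(S_n^{-1}(e_k^+))$ are co-linear, so the (H3) inequality transfers \emph{exactly} (the error terms cancel, cf. \eqref{eq.4.8.reff}--\eqref{eq.4.6.ref}) rather than up to an $O(\varepsilon_n)$ error requiring the uniform gap (H3)$'$. Where you genuinely differ is in the approximants and the black box invoked: the paper reduces WLOG to $\Omega=C_1$ convex, keeps the approximating domains \emph{convex} by replacing each arc charged by $|f|$ on $\overline{\mathcal S}$ with an inscribed polygonal arc --- so that (S) holds trivially for $(\Omega_n,g_n)$ --- and then cites Propositions \ref{Prop. convex case} and \ref{Uniqueness in the convex case} wholesale (which handle infinitely many arcs internally); you instead make each piece \emph{strictly} convex and call Proposition \ref{strictly_convex_case}, which indeed never used (S), paying for it with an extra truncation layer when a piece carries infinitely many arcs. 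Both routes end with the same limit passage via Lemma \ref{l-conv}, so yours is a legitimate variant; the paper's choice buys the infinite-arc case for free, yours avoids ever re-entering the (S)-dependent part of Section \ref{Section Convex case}.

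Three corrections to your write-up. First, the appeal to Proposition \ref{NEW Prop} is both misplaced and unnecessary: as a black box it assumes $\mathcal S$ finite, and its proof runs through the projection-based Proposition \ref{Prop. convex case with finite arcs}; but since the $S_n$-transfer of (H3) is exact, you never need (H3)$'$ at all --- the strict inequality (H3), which holds on each $C_i$ by (L1)(3), passes verbatim to the approximants, so Proposition \ref{strictly_convex_case} applies directly after truncation, and your worry about uniformity in $n$ in the last paragraph evaporates for the same reason. Second, your closing claim that $(Id,\bbT)_\#f^+$ is \emph{the} unique optimal plan is false under ($\widetilde{A3}$), see Example \ref{non uniqueness of the optimal transport plan}; what you actually need (and prove) is only that the cross-mass-free plan produced by Proposition \ref{General case} coincides with $(Id,\bbT)_\#f^+$, which follows from uniqueness \emph{within} each convex piece via the ray-crossing argument of Proposition \ref{Uniqueness in the convex case}. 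Third, a geometric wrinkle: if you bulge a $\chi$-type arc as a single arc, $S_n$ is undefined near the tip $c_j$, because the short rays $[x,\bbT(x)]$ with both endpoints close to $c_j$ no longer reach the perturbed boundary; you must keep $c_j$ fixed and perturb $\chi_j^{i+}$ and $\chi_j^{i-}$ separately (a wrinkle the paper's own constructions share unless $c_j$ is forced to lie on the new boundary).
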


\begin{proof} 
From Proposition \ref{General case}, we just need to show that $\gamma^\star\res [C_i \times C_i]=(Id,{\bf{T}})_{\#}(f^+\res \partial C_i)$, for all $i \in I_C$. For simplicity of exposition and without loss of generality, let us assume that $\Omega$ is convex or equivalently, that  $I_C$ is a singleton and $I_E=\emptyset$; so we have $\Omega=C_1$.

Now, we set 
$$I_S:=\{i \in I_\chi \cup I_\Gamma\,:\,{\color{black}|f|(\chi_i \cap \overline{\mathcal{S}})>0\,\,\,\,\mbox{or}\,\,\,\,|f|(\Gamma_i^\pm \cap \overline{\mathcal{S}})>0} 
\}.$$
For any fixed $n \in \mathbb{N}$ and  every $i \in I_S$, we take $n$ points on $\chi_i$ (resp. $\Gamma_i^\pm$), including the endpoints, which are equidistanced in the sense of the arclength. We take the boundary of their convex hull and after removing the interval connecting endpoints of $\chi_i$  (resp. $\Gamma_i^\pm$) we call the resulting 
polygonal curve $\chi_{i,n}$ (resp. $\Gamma_{i,n}^\pm$).
Now, we define the regions $\Omega_n$, $n \in \bN$, as 
the open sets bounded by  curves 
$$\partial\Omega_n:=[\partial\Omega\backslash \bigcup_{i \in I_S}(\chi_i \cup \Gamma_i^\pm)] \cup \bigcup_{i \in I_S} (\chi_{i,n} \cup \Gamma_{i,n}^\pm).$$ 

It is clear that $\Omega_n$ is convex. 
The map $S_n$ has been already  constructed; 
however, we recall its definition for the sake of clarity of exposition. 
For points $x\in U^+\cup U^-$, we define
 $S_n(x)$ to be the point of intersection of the line segment $[x,{\bf{T}}(x)]$ (resp. $[x,{\bf{T}}^{-1}(x)]$) with the boundary  $\partial\Omega_n$, which is the closest to $x$. On $U_0$, we set $S_n=Id$.
We recall also that the inverse map $S^{-1}_n$ is well defined on $\partial\Omega_n$ 
and that the map $S_n$ is continuous. 
Now, we define the Borel measure $f_n$ on $\partial\Omega_n$ as follows:
$$f_n={S_n}_{\#}f.$$ 
Let us fix any $x_0\in U_0$, we note that $x_0 \in \partial\Omega_n$, for all $n$. Then, we set the boundary data $g_n$ on $\partial\Omega_n$ by $g_n(x) = f_n(\arc{x_0\,x})$.

We shall prove that $\partial\Omega_n$ can be decomposed into arcs $\chi_{i,n}$ ($i \in I_\chi$) and $\Gamma_{i,n}^\pm$ ($i \in I_\Gamma$) satisfying conditions (H1), (H2) and (H3). 
For every $i \in I_\chi\backslash I_S$ (resp. $i \in I_\Gamma\backslash I_S$), we have $\chi_{i,n}=\chi_i$ (resp. $\Gamma_{i,n}^\pm=\Gamma_i^\pm$). Since  $f_n(\chi_{i,n}) =0$ for $i \in I_S$, 
one can see that $\chi_{i,n}$ can be decomposed into two open arcs $\chi_{i,n}^+$,\ $\chi_{i,n}^-$  and a singleton $\{c_{i,n}\}$ such that $g_n$ is strictly increasing on ${\chi_{i,n}^+}$ and strictly decreasing on ${\chi_{i,n}^-}$ with $TV(g_{|{\chi_{i,n}^+}}) =  TV(g_{|{\chi_{i,n}^-}})$. In the same way, we can check that $g_n$ is strictly increasing on ${\Gamma_{i,n}^+}$ (resp. strictly decreasing on ${\Gamma_{i,n}^-}$) with $TV(g_{|{\Gamma_{i,n}^+}}) =  TV(g_{|{\Gamma_{i}^+}})$ (resp. $TV(g_{|{\Gamma_{i,n}^-}}) =  TV(g_{|{\Gamma_{i}^-}})$). 
Indeed, if $x_1,\,x_2 \in \partial\Omega_n$
and $x_1<x_2$, 
then we see that $S_n^{-1}(x_1),\,S_n^{-1}(x_2) \in \partial\Omega$ satisfy $S_n^{-1}(x_1)<S_n^{-1}(x_2)$ and we have
$$f_n(\arc{x_1x_2})=f(\arc{S_n^{-1}(x_1)\,S_n^{-1}(x_2)}).$$
Hence, 
$$g_n(x_2)-g_n(x_1)=
g(S_n^{-1}(x_2)) - g(S_n^{-1}(x_1)).$$\\
Let  \,$T_{i,n}$\, be the convex hull of \,$\Gamma_{i,n}^+$ and $\Gamma_{i,n}^-$ and $D_{i,n}$ be the convex hull of $\chi_{i,n}$. Then, we  have $D_{i,n} \subset D_i$ and $T_{i,n} \subset T_i$, as a result the sets $T_{j,n},$ $j\in I_\Gamma$,\  $D_{i,n}$, $i\in I_\chi$ are mutually disjoint.
In addition, we have $|f_n|(\partial \Omega\backslash(\bigcup_{i\in I_\Gamma} (\Gamma_{i,n}^+\cup\Gamma_{i,n}^-)\cup \bigcup_{i\in I_\chi} \chi_{i,n}))=0$. Hence, condition (H1) is satisfied. 

Let ${\bf{T}}_n$ be the transport map defined by (\ref{df-bartildeT}) corresponding to $f_n$ and $\partial \Omega_n$. 
We recall that if \,$e^+$  is in the domain of ${\bf{T}}_n$ 
then one has 
$${\bf{T}}_n(e^+)=S_n({\bf{T}}(S_n^{-1}(e^+))).$$
Thanks to the fact that $]S_n^{-1}(x),{\bf{T}}(S_n^{-1}(x))[ \subset \Omega$\, for all $x \in \spt(f^+_n)$, it is clear that $]x,{\bf T}_n(x)[  \subset \Omega_n$ and so, (H2) is also satisfied. 
 
Now, we need to check  that assumption (H3) holds on $\partial\Omega_n$. Take a finite sequence $\{e_{k}^+\}_{1 \leq k \leq m}$ ($m \in \mathbb{N}^\star$) such that 
$e_k^+ \in \chi_{i_k,n}^+ \cup \Gamma_{i_k,n}^+$ for some $i_k \in I_\chi \cup I_\Gamma$.
Since $g$ satisfies (H3) on $\partial\Omega$, we know that
$$\sum_{k=1}^{m} |S_n^{-1}(e_k^+) - {\bf{T}}(S_n^{-1}(e_k^+))| $$
\begin{align}\label{eq.4.8.reff}
&< 
\sum_{k=1}^{m-1}
|S_n^{-1}(e_k^+) - {\bf{T}}(S_n^{-1}(e_{k+1}^+))| + |S_n^{-1}(e_m^+ )-{\bf{T}}(S_n^{-1}(e_{1}^+))|
\\ \notag
&\leq\sum_{k=1}^{m-1}
|S_n^{-1}(e_k^+) - e_{k}^+|+|e_k^+ - {\bf{T}}_n(e_{k+1}^+)|+|{\bf{T}}_n(e_{k+1}^+)-{\bf{T}}(S_n^{-1}(e_{k+1}^+))|
\end{align}
$$\,\,\,\,\,\,\,+\,\,\,\,\,|S_n^{-1}(e_m^+) - e_{m}^+| 
+|e_m^+ - {\bf{T}}_n(e_{1}^+)| + |{\bf{T}}_n(e_{1}^+)-{\bf{T}}(S_n^{-1}(e_{1}^+))|.$$
\\

Due to the definition of $S_n$, we have the following equality:\\
\begin{equation} \label{eq.4.6.ref}
|e_k^+ - {\bf{T}}_n(e_k^+)|=
|S_n^{-1}(e_k^+) - {\bf{T}}(S_n^{-1}(e_k^+))| - |e_k^+ - S_n^{-1}(e_k^+)| -|{\bf{T}}(S_n^{-1}(e_k^+)) - {\bf{T}}_n(e_k^+)|.
\end{equation}\\
After summing over $k$ in \eqref{eq.4.6.ref} and taking into account \eqref{eq.4.8.reff}, 
we infer that $g_n$ satisfies (H3) on $\partial\Omega_n$, i.e.  $$\sum_{k=1}^{m} |e_k^+ - {\bf{T}}_n(e_k^+)| 
 < \sum_{k=1}^{m-1}
 |e_k^+ - {\bf{T}}_n(e_{k+1}^+)|  + |e_m^+ - {\bf{T}}_n(e_{1}^+)|.$$\\
Condition (S) is satisfied on $\partial\Omega_n$ since it is a polygon with finitely many sides. Hence, thanks to Propositions \ref{Prop. convex case} and \ref{Uniqueness in the convex case}, $\gamma_n:=(Id,{\bf{T}}_n)_{\#}f_n^+$ is the unique optimal transport plan between $f_n^+$ and $f_n^-$.  We know that $\gamma_n \rightharpoonup \gamma$ for some $\gamma \in \mathcal{M}^+(\overline{\Omega} \times\overline{\Omega})$, up to a subsequence. Due to \eqref{r-Hd-co},  we deduce that $f_n \rightharpoonup f$. 
Thus, we also have  $(\Pi_x)_{\#}\gamma=f^+$ and $(\Pi_y)_{\#}\gamma=f^-$.
By Lemma \ref{l-conv},
we infer that $\gamma$ is an optimal transport plan between $f^+$ and $f^-$. 
In addition, for any $x \in U^+\cup U^-$,
we have $S_n(x) \to x$ and ${\bf{T}}_n(S_n(x)) \to  {\bf{T}}(x)$, because ${\bf{T}}_n(S_{n}(x))=S_{n}({\bf{T}}(x))$.
Thus, this yields that $\gamma=(Id,{\bf{T}})_{\#}{f^+}$ is an optimal transport plan between $f^+$ and $f^-$. Since $C_1$ is convex, then recalling the proof of  Proposition \ref{Uniqueness in the convex case}, we infer that $\gamma^\star\res [C_1 \times C_1]=(Id,{\bf{T}})_{\#}{f^+}$.  
\end{proof}

Finally, we show the existence of a minimal vector field $v$ for Problem \eqref{Beckmann}. We note that the optimal transport plan $\gamma$ in Problem \eqref{Kantorovich} is not necessarily unique if condition ($\widetilde{A3}$) holds instead of (A3) (see Example \ref{non uniqueness of the optimal transport plan} below). Despite that, we will be able to prove  uniqueness of the minimal vector field $v$ in Problem \eqref{Beckmann} anyway.
\begin{proposition}  \label{Uniqueness in the nonconvex case} 
Under the assumptions (A1), (A2) $\&$ ($\widetilde{A3}$), Problem \eqref{Beckmann} has a unique minimizer provided that $g$ is piecewise monotone.
\end{proposition}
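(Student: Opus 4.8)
The plan is to separate the two assertions. Existence of a minimizer is essentially already in hand, while uniqueness has to be stated as a property of the \emph{flow} rather than of the plan, since under $(\widetilde{A3})$ the optimal transport plan itself is generally not unique (Example \ref{non uniqueness of the optimal transport plan}).

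First I would record existence. By Proposition \ref{General} the map $\mathbf{T}$ is an optimal transport map, so $\gamma^\star=(Id,\mathbf{T})_\#f^+$ solves \eqref{Kantorovich}; the field $v_{\gamma^\star}$ defined by \eqref{flow} is admissible in \eqref{Beckmann}, and exactly as in Proposition \ref{existence & uniqueness of an optimal flow in the nonconvex case} the chain $\eqref{dual}\le\eqref{Beckmann}\le\int_{\overline\Omega}|v_{\gamma^\star}|=\int_{\overline\Omega\times\overline\Omega}|x-y|\,d\gamma^\star=\eqref{Kantorovich}=\eqref{dual}$ forces $v_{\gamma^\star}$ to be a minimizer and yields $\eqref{Beckmann}=\eqref{Kantorovich}$. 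Conditions (A2) and (H2) (inside the convex pieces) give $|v_{\gamma^\star}|(\partial\Omega)=0$, which is what is needed to return to \eqref{LGP}.

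For uniqueness, let $v_1,v_2$ be two minimizers of \eqref{Beckmann}. Since $\eqref{Beckmann}=\eqref{Kantorovich}$, the adaptation of \cite[Theorem 4.13]{Santambrogio} to non-convex domains (through \cite[Proposition 2.6(3)]{SD}), already used in Proposition \ref{existence & uniqueness of an optimal flow in the nonconvex case L}, lets me write $v_j=v_{\gamma_j}$ for optimal plans $\gamma_j$. The admissible set of \eqref{Beckmann} is affine and $v\mapsto\int|v|$ is convex, so $\tfrac12(v_1+v_2)$ is again a minimizer; hence the inequality $\int_{\overline\Omega}|v_1+v_2|\le\int_{\overline\Omega}|v_1|+\int_{\overline\Omega}|v_2|$ is saturated, which forces $v_1$ and $v_2$ to be parallel with the same orientation $(|v_1|+|v_2|)$-a.e. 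I would then fix a single Kantorovich potential $\phi$ and use the calibration $v_j=-\,|v_j|\,\nabla\phi$, valid for every optimal field (recall $|\nabla\phi|=1$ on transport rays); this pins the common direction as $-\nabla\phi$, so that $v_1=v_2$ becomes equivalent to the equality of transport densities $\sigma_{\gamma_1}=\sigma_{\gamma_2}=|v_1|,|v_2|$.

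The main obstacle is exactly this last equality, because when the inequality in $(\widetilde{A3})$ degenerates the optimal plan is not unique and \emph{a priori} $\sigma_{\gamma_1}$ and $\sigma_{\gamma_2}$ could differ. The point to exploit is that equality in $(\widetilde{A3})$ can only occur in collinear configurations: this is precisely the co-linearity alternative analysed in the proof of Proposition \ref{General case}, where the non-collinear case gives the strict bound \eqref{no transport outside} and the collinear case is reduced to a diameter estimate. Along any line supporting transport rays the problem is one-dimensional, and for an optimal plan no mass crosses a given point in both directions; therefore the density of $\sigma_\gamma$ at each point of such a line equals the net flux there, a quantity fixed by mass balance and so independent of the chosen optimal plan. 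Handling the endpoints of rays with the negligibility of the multiple points $\mathcal{N}$ (Lemma \ref{l-neg-N}), I would conclude $\sigma_{\gamma_1}=\sigma_{\gamma_2}$, hence $v_1=v_2$, so the minimizer of \eqref{Beckmann} is unique.
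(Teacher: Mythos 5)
Your existence part and the reduction of uniqueness to equality of transport densities are sound: the convexity/saturation argument and the calibration $v_j=-\sigma_{\gamma_j}\nabla\phi$ are legitimate, and they correctly isolate the remaining issue as $\sigma_{\gamma_1}=\sigma_{\gamma_2}$. The gap is in your final step. The claim that equality in $(\widetilde{A3})$ ``can only occur in collinear configurations'' is false: for the cost $|x-y|$, equality in the two-point cyclical relation $|e_1^+-e_1^-|+|e_2^+-e_2^-|=|e_1^+-e_2^-|+|e_2^+-e_1^-|$ also holds for rectangle-type configurations of four non-collinear points. This is exactly the situation of Example \ref{non uniqueness of the optimal transport plan}: the points $(t,t),(t,-t),(-t,-t),(-t,t)$ form a square, both pairings are optimal, and the two resulting optimal plans $\gamma_b$ and $\gamma_r$ have genuinely different transport densities (one supported on vertical segments, the other on horizontal ones). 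Hence the statement ``the density of $\sigma_\gamma$ at each point equals the net flux there, independent of the chosen optimal plan'' is simply not true for optimal plans of \eqref{Kantorovich} under $(\widetilde{A3})$, and no one-dimensional flux-balance argument along lines can recover it, because two optimal plans need not transport mass along the same family of lines at all. (The collinearity alternative in Proposition \ref{General case} concerns the perturbed points on the approximating boundary $\partial\Omega_n$ and is used there to prove a strict inequality; it does not characterize the equality cases of $(\widetilde{A3})$ on $\partial\Omega$.)

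What actually saves uniqueness — and what your argument never exploits at the decisive moment — is the constraint that a Beckmann minimizer's plan must have every segment $[x,y]$, $(x,y)\in\spt(\gamma)$, contained in $\overline\Omega$; this is precisely how $\gamma_r$ is discarded in Example \ref{non uniqueness of the optimal transport plan}, since its rays exit $\Omega$ through the concave notch. The paper's proof runs along these lines: if $v=v_\gamma$ is a minimizer and $(x,y)\in\spt(\gamma)$ with $y\neq\bbT(x)$, then $[x,y]$ and $[x,\bbT(x)]$ are two distinct transport rays emanating from $x$, both contained in $\overline\Omega$; the set of such $x$ is contained in the set of endpoints of the arcs $\Gamma_i^\pm$, $\chi_i$, $E_i^\pm$, hence at most countable and $f^+$-null because $f^+$ is atomless. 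Therefore $\gamma=(Id,\bbT)_\#f^+$ for \emph{every} plan inducing a minimizer, and uniqueness of the flow follows at once, with no need for plan-independence of the density among all optimal plans — a property which, as the example shows, genuinely fails. Note also that Lemma \ref{l-neg-N}, which you invoke for the multiple points, is stated only for convex $\Omega$, so even that step would require the adaptation the paper makes rather than a direct citation.
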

\begin{proof}
Due to Proposition \ref{General}, we know that $\gamma^\star:=(Id,{\bf{T}})_{\#}f^+$ is an optimal transport plan between $f^+$ and $f^-$. Since $]x,{\bf{T}}(x)[ \subset \Omega$, for all \,$x \in \spt(f^+)$, 
then the vector measure $v_{\gamma^\star}$ is well defined and it turns out to be a solution for Problem \eqref{Beckmann}. In particular, we have  $\eqref{Kantorovich}=\eqref{Beckmann}$ and so, we recall, see for instance \cite[Chapter 4]{Santambrogio}, that in this case any minimizer $v$ of Problem \eqref{Beckmann} will be of the form $v=v_\gamma$ for some optimal transport plan $\gamma$ and for all $(x,y) \in \spt(\gamma)$, we must have $[x,y] \subset \overline{\Omega}$.
Let $v=v_\gamma$ be such a minimizer. Then, we claim that
$\gamma = (Id,{\bf{T}})_{\#}f^+$. 
Indeed, let us set
$$
A = \bigg\{ x\in \partial \Omega: \exists\,\,y\neq \bbT(x),\,(x,y)\in \spt(\gamma)\bigg\}
$$
and assume that $f^+(A)>0$.
Then, one can see exactly as in the proof of Proposition \ref{Uniqueness in the convex case} that for $f^+-$a.e. $x \in A$, there are two different transport rays starting at $x$, 
$[x,{\bf{T}}(x)]$ and $[x,y]$ which are contained in $\overline{\Omega}$, thus 
$A$ must be contained in the set of endpoints of arcs of type $\Gamma_i^\pm$, $\chi_i$, or $E_i^\pm$. 
Hence, $A$ is at most countable and $f^+(A)=0$. $\qedhere$
\end{proof}
Consequently, we get the following extension of Theorem \ref{Theorem 4.7}:
\begin{theorem}\label{thm: weak A3}
Under the assumptions (A1), (A2) $\&$ ($\widetilde{A3}$), Problem \eqref{LGP} has a unique solution provided that $g$ is piecewise monotone. 
\end{theorem}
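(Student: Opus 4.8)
The plan is to mirror the short derivation of Theorem \ref{Theorem 4.7}, but to replace its uniqueness input -- which there came from the uniqueness of the optimal plan guaranteed by (A3) -- with Proposition \ref{Uniqueness in the nonconvex case}, and then to transport the conclusion from the Beckmann problem \eqref{Beckmann} back to \eqref{LGP} through the equivalence in Proposition \ref{t-equiv}. Concretely, the whole statement reduces to two facts: that \eqref{Beckmann} has a unique minimizer, and that this minimizer assigns zero mass to the boundary $\partial\Omega$.

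First I would invoke Proposition \ref{General} to obtain that $\gamma^\star=(Id,{\bf{T}})_{\#}f^+$ is an optimal transport plan for \eqref{Kantorovich}, and then Proposition \ref{Uniqueness in the nonconvex case} to conclude that the associated vector field $v_{\gamma^\star}$ is the \emph{unique} minimizer of \eqref{Beckmann}. It is precisely here that the weakening of (A3) to ($\widetilde{A3}$) is felt: under ($\widetilde{A3}$) the optimal plan in \eqref{Kantorovich} need no longer be unique (cf. Example \ref{non uniqueness of the optimal transport plan}), so one cannot repeat verbatim the argument behind Theorem \ref{Theorem 4.7}. The point is that Proposition \ref{Uniqueness in the nonconvex case} already absorbs this difficulty and delivers a unique optimal vector field $v$ regardless of the non-uniqueness of the plan.

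Next I would verify that $|v_{\gamma^\star}|(\partial\Omega)=0$. Since $v_{\gamma^\star}$ is built from $\gamma^\star=(Id,{\bf{T}})_{\#}f^+$ and $|v_{\gamma^\star}| = \sigma_{\gamma^\star}$, the transport density from \eqref{Transport density definition} satisfies
\[
\sigma_{\gamma^\star}(\partial\Omega) = \int_{\partial\Omega}\mathcal{H}^1\big([x,{\bf{T}}(x)]\cap\partial\Omega\big)\,\mathrm{d}f^+(x),
\]
and this vanishes because, for $f^+$-a.e. $x$, the open segment $]x,{\bf{T}}(x)[$ lies in the interior $\Omega$: on the arcs contained in the convex pieces $C_i$ this is condition (H2) built into (A1), and on the arcs $E_i^\pm$ it is condition (A2). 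Hence $\mathcal{H}^1([x,{\bf{T}}(x)]\cap\partial\Omega)=0$ for $f^+$-a.e. $x$, giving $|v_{\gamma^\star}|(\partial\Omega)=0$.

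Finally I would apply the equivalence. By Proposition \ref{t-equiv}(3), the minimizer $v_{\gamma^\star}$ of \eqref{Beckmann} with $|v_{\gamma^\star}|(\partial\Omega)=0$ produces a solution $u\in BV(\Omega)$ of \eqref{LGP} with $v_{\gamma^\star}=R_{\frac{\pi}{2}}Du$, which settles existence. For uniqueness, Proposition \ref{t-equiv}(2) turns any solution of \eqref{LGP} into a minimizer of \eqref{Beckmann} of the form $R_{\frac{\pi}{2}}Du$; since that minimizer is unique, $Du$ is determined, so $u$ is determined up to an additive constant, which is in turn pinned down by the trace condition $u|_{\partial\Omega}=g$. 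I expect the only genuinely delicate issue to be ensuring that the non-uniqueness of the Kantorovich plan does not contaminate the uniqueness conclusion; but since that obstacle has already been resolved upstream in Proposition \ref{Uniqueness in the nonconvex case}, the remaining steps are routine and the theorem follows.
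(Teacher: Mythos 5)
Your proposal is correct and follows essentially the same route as the paper, whose proof of this theorem is just a pointer to the argument of Theorem \ref{Theorem 4.7}: combine Proposition \ref{General} (optimality of $(Id,{\bf{T}})_{\#}f^+$), Proposition \ref{Uniqueness in the nonconvex case} (uniqueness of the Beckmann minimizer, which is exactly what replaces the plan-uniqueness lost in passing from (A3) to ($\widetilde{A3}$)), the vanishing of $|v_{\gamma^\star}|$ on $\partial\Omega$ via (H2)/(A2), and Proposition \ref{t-equiv}. Your added detail on transferring uniqueness back to \eqref{LGP} ($Du$ determined, hence $u$ determined up to a constant fixed by the trace) is the intended, standard completion of that sketch.
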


\begin{proof}
The argument is similar to 
the proof of Theorem \ref{Theorem 4.7} and it is left to the interested reader. 
\end{proof}

Finally, we present an example where (A3) is violated but ($\widetilde{A3}$) is satisfied and a solution to Problem \eqref{LGP} exists.
\begin{example} \label{non uniqueness of the optimal transport plan} \rm
 Fix \,$0<a<b$. Then, the domain $\Omega$ as shown in Figure \ref{non uniqueness} is formed from the vertices of the squares $[-a,a]^2$ and $[-b,b]^2$. We define the  boundary data as follows:
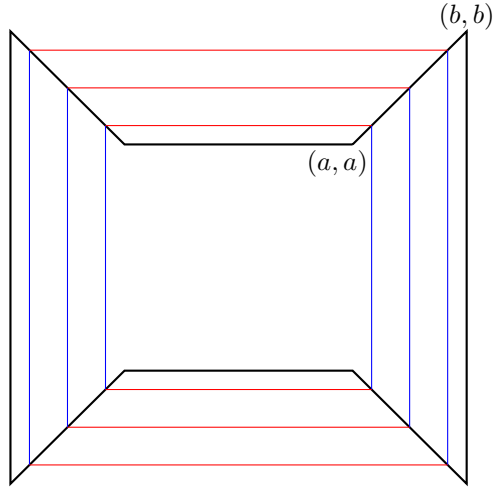
\begin{figure}[h]
\begin{tikzpicture}[scale=0.50]
\draw[thick] (3,3)--(6,6)--(6,-6)--(3,-3)--(-3,-3)--(-6,-6)--(-6,6)--(-3,3)--(3,3);
\draw[blue] (3.5,-3.5)--(3.5,3.5);
\draw[blue] (4.5,-4.5)--(4.5,4.5);
\draw[blue] (5.5,-5.5)--(5.5,5.5);
\draw[blue] (-3.5,-3.5)--(-3.5,3.5);
\draw[blue] (-4.5,-4.5)--(-4.5,4.5);
\draw[blue] (-5.5,-5.5)--(-5.5,5.5);
\draw[red] (3.5,3.5)--(-3.5,3.5);
\draw[red] (4.5,4.5)--(-4.5,4.5);
\draw[red] (5.5,5.5)--(-5.5,5.5);
\draw[red] (-3.5,-3.5)--(3.5,-3.5);
\draw[red] (-4.5,-4.5)--(4.5,-4.5);
\draw[red] (-5.5,-5.5)--(5.5,-5.5);

 \node at (2.6,2.5) {$(a,a)$};
 \node at (6,6.4)  {$(b,b)$};
\end{tikzpicture}
    \caption{Example of non uniquenss}
        \label{non uniqueness}
\end{figure}

$$g(x_1,x_2)=\begin{cases} x_1 &\quad x_2=\pm x_1,\,\,\,\,a\leq x_1\leq b, \\
-x_1 &\quad x_2=\pm x_1,\,\,-b\leq x_1\leq-a,\\
a &\quad x_2=\pm a,\,\,\,-a\leq x_1\leq a,\\ 
b &\quad x_1=\pm b,\,\,\,-b\leq x_2\leq b.\\ 
\end{cases}$$\\
Notice that (A3) is violated in this case and the blue segments and the red ones correspond to all possible transportation rays between $f^+$ and $f^-$. In particular, we can construct two optimal transport plans $\gamma_b$ and $\gamma_r$, where $\gamma_b$ is supported on the blue segments while $\gamma_r$ is supported on the red ones. This means that the optimal transport plan is not unique. However,
only $\gamma_b$ corresponds to a solution to the least gradient problems since its transport rays are included in $\Omega$.
\end{example}

We close the paper with an observation on the continuity of the solution $u$ in Problem \ref{LGP}.
\begin{theorem} 
Assume that (A1), (A2) $\&$ ($\widetilde{A3}$) hold and $g\in C(\partial\Omega)$ is piecewise monotone. Then, the unique solution $u$ of Problem \eqref{LGP} is continuous in $\overline{\Omega}$.  
\end{theorem}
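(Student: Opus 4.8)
The plan is to describe the minimizer $u$ explicitly through the transport structure and then deduce continuity by a compactness argument on transport rays. By Proposition \ref{t-equiv} the unique minimizer satisfies $R_{\frac{\pi}{2}}Du = v_\gamma$ with $\gamma = (Id,\bbT)_\# f^+$, the optimal plan identified in Proposition \ref{General}. By \eqref{flow} the vector measure $v_\gamma$ is concentrated on, and directed along, the transport rays $[x^+,\bbT(x^+)]$, so $Du = R_{-\frac{\pi}{2}}v_\gamma$ is orthogonal to each ray and $u$ is constant on the relative interior of every ray. Since both endpoints of $[x^+,\bbT(x^+)]$ lie on $\partial\Omega$, where the trace is $g$, and since the construction of $\bbT$ forces $g(x^+)=g(\bbT(x^+))$ (matching of the total variation on the $\chi,\Gamma$ arcs, and $f(\arc{\,x^+x^-})=0$ on the $E$ arcs), that constant value equals $g(x^+)$. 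On the open set where $v_\gamma$ vanishes one has $Du=0$, so $u$ is locally constant and equals the constant boundary value of $g$ on the adjacent flat parts $F_i$ and on $\partial X_i\cap\partial\Omega$. This gives the pointwise description $U(z):=g(x^+)$ whenever $z\in[x^+,\bbT(x^+)]$, and $U(z):=$ the corresponding constant value on the flat regions.

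First I would check that $U$ is well defined and coincides with $u$ almost everywhere. Well-definedness follows because two transport rays cannot meet at an interior point of either (see the remark after \eqref{transport ray}), because $g$ takes equal values at the two endpoints of each ray, and because $g$ is constant on each flat boundary arc by (A1); the identity $U=u$ a.e. then follows since both functions are constant on rays with the same boundary trace, and $u$ is the unique solution.

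The core of the argument, and the step I expect to be the main obstacle, is the continuity of $U$ on $\overline{\Omega}$. Fix $z_0\in\overline{\Omega}$ and a sequence $z_n\to z_0$. For each $n$ I would pick a transport ray $\mathcal R_n=[x_n^+,x_n^-]$ with $z_n\in\mathcal R_n$ (when $z_n$ lies in a flat region I take instead a ray bounding that region, on which $U$ has the same value). By compactness of $\overline{\Omega}$ and Lemma \ref{l-conv}(2), a subsequence of the $\mathcal R_n$ converges to a transport ray $\mathcal R_\infty=[x^+,x^-]$ with $z_0\in\mathcal R_\infty$ and $x_n^\pm\to x^\pm$; continuity of $g$ then yields $U(z_n)=g(x_n^+)\to g(x^+)$. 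It remains to identify $g(x^+)$ with $U(z_0)$. If $z_0$ lies in the relative interior of a ray, the non-crossing property forces $\mathcal R_\infty$ to be that ray and hence $g(x^+)=U(z_0)$. If $z_0\in\partial\Omega$, then, because ray interiors lie in $\Omega$ by conditions (H2) and (A2), $z_0$ must be an endpoint of $\mathcal R_\infty$, so $g(x^+)=g(z_0)=U(z_0)$; the same endpoint argument covers $z_0$ on a flat boundary arc.

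The genuinely delicate configurations are the junctions where several of the sets $C_i$, $E_j$, $X_k$ abut and the multiple points where rays bifurcate: there I would show that every limiting ray through $z_0$ carries the single value determined by $z_0$, using again the non-crossing property together with the piecewise monotonicity of $g$ and the mutual disjointness of the $C_i$ and $E_j$ guaranteed by (A1). Since every subsequential limit produces the same value $U(z_0)$, the full sequence satisfies $U(z_n)\to U(z_0)$, which establishes continuity of $u=U$ on $\overline{\Omega}$.
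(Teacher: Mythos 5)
Your route is genuinely different from the paper's: you build an explicit representative $U$ (constant along transport rays, with value $g$ at the ray endpoints, constants on the flat components) and prove sequential continuity of $U$, whereas the paper argues by contradiction, showing that a discontinuity point would lie on $\partial\{u\ge t_1\}\cap\partial\{u\ge t_2\}$ for two super-level sets, hence on $\partial\Omega$ (interior level-set boundaries are non-crossing transport rays), and then ruling this out because the resulting jump would have to be carried by a single transport ray. The parts of your argument that use non-crossing of rays, Lemma \ref{l-conv}(2), and the identity $g(x^+)=g(\bbT(x^+))$ (which is indeed correct on all three types of arcs $\chi$, $\Gamma$, $E$) are sound for the portion of $\overline{\Omega}$ foliated by rays.

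The genuine gap is the identification $U=u$ a.e., together with the well-definedness of $U$ off the rays. First, ``$Du=R_{-\frac{\pi}{2}}v_\gamma$ is orthogonal to each ray, hence $u$ is constant on the relative interior of every ray and the constant equals $g(x^+)$'' is not a proof for a $BV$ function: $u$ is defined only a.e., and pinning its constant on a given ray to the trace value at the ray's endpoint is itself a continuity-up-to-the-boundary statement of exactly the kind being proved. Second, and more seriously, your recipe assigns no value to a connected component $\Delta$ of $\{\sigma_\gamma=0\}$ whose closure meets $\partial\Omega$ in an $\cH^1$-null set --- for instance the fan of directions at a multiple point $x_0\in\cN$ --- since such a component has no adjacent flat part of $g$. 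In the continuity argument you bridge this by asserting that a bounding ray of such a region ``carries the same value of $U$''; but the statement that $u$ does not jump across a single transport ray is precisely the crux of the theorem, and it holds only because $f$ is atomless, which forces $|Du|(\mathcal{R})=|v_\gamma|(\mathcal{R})=\sigma_\gamma(\mathcal{R})=0$ for every individual ray $\mathcal{R}$ (its only contact with $\spt f^+$ is its endpoint, by (H2)/(A2)). This atomlessness argument is the single fact on which the paper's whole proof turns, and it never appears in your proposal. With it, your construction can be completed (the fan value is forced to equal $g(x_0)$ and $U=u$ a.e. follows); without it, the appeal to ``$u$ is the unique solution'' is circular, since you have not verified that $U$ is an admissible competitor, i.e. that $U\in BV(\Omega)$, $R_{\frac{\pi}{2}}DU=v_\gamma$, and $U|_{\partial\Omega}=g$ in the trace sense.
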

\begin{proof}
Assume that there is a point $x_0 \in \overline{\Omega}$ such that $u$ is discontinuous at $x_0$. Then, there exist two numbers $t_1$ and $t_2$ such that 
$$\lim_{n \to \infty}\mbox{ess\,inf}_{B(x_0,\frac{1}{n})} \,u< t_1 <t_2<\lim_{n \to \infty}\mbox{ess\,sup}_{B(x_0,\frac{1}{n})}\, u.$$
Now, consider the super-level sets $E_{t_1}:=\{u \geq t_1\}$ and $E_{t_2}:=\{u \geq t_2\}$. Then, we see that $x_0 \in \overline{E_{t_2}} \cap  \overline{\mathbb{R}^2 \setminus E_{t_1}}$. However, we have $E_{t_2} \subset E_{t_1}$. Hence, we infer that $x_0 \in \partial  E_{t_1} \cap \partial  E_{t_2}$. So, the only possibility is to have $x_0 \in \partial\Omega$. Let $E$ be the set bounded by $\partial  E_{t_1}$, $\partial  E_{t_2}$ and $\partial\Omega$. It is clear that for $\varepsilon>0$ small enough, the transport density $\sigma=0$ on $E \cap B(x_0,\varepsilon)$. 
Then, $u$ is constant on $E$ and since $t_1 <t_2$ then this means that $u$ has a jump on $\partial  E_{t_1}$ or $ \partial  E_{t_2}$. But, this yields  a contradiction thanks to the fact that $f$ is atomless and so, $\sigma$ gives zero mass to any transport ray.   
\end{proof}

\section*{Acknowledgement}
The project was initiated when S. Dweik was employed by the Faculty of Mathematics, Informatics and Mechanics of the University of Warsaw. P. Rybka was partially supported by the National Science Centre, Poland through the grant 2017/26/M/ST1/00700. 
A. Sabra would like to thank the National Science Centre, Poland, for partially supporting  his visit to the University of Warsaw through the grant 2017/26/M/ST1/00700, and the Center for Advanced Mathematical Sciences (\url{https://orcid.org/0009-0004-5763-5004}) for partially funding the research on this project.

\end{document}